\documentclass[11pt]{amsart}

\usepackage[T1]{fontenc}
\usepackage{amsmath}
\usepackage{amssymb}
\usepackage{mathtools}
\usepackage{enumitem}
\usepackage[colorlinks=true, linkcolor=cyan, citecolor=cyan]{hyperref}
\usepackage[capitalise]{cleveref}
\RemoveFromHook{label}[firstaid/cleveref]
\usepackage[margin=1.1in]{geometry}

\numberwithin{equation}{section}

\newtheorem{theorem}{Theorem}[section]
\newtheorem{proposition}[theorem]{Proposition}
\newtheorem{lemma}[theorem]{Lemma}
\newtheorem{corollary}[theorem]{Corollary}
\newtheorem{fact}[theorem]{Fact}
\newtheorem{conjecture}[theorem]{Conjecture}
\newtheorem*{theoremA}{Theorem A}
\newtheorem*{theoremB}{Theorem B}

\theoremstyle{definition}
\newtheorem{definition}[theorem]{Definition}
\newtheorem{remark}[theorem]{Remark}
\newtheorem{example}[theorem]{Example}
\newtheorem{examples}[theorem]{Examples}
\newtheorem{notation}[theorem]{Notation}

\newcommand{\C}{\mathbb{C}}
\newcommand{\R}{\mathbb{R}}

\newcommand{\Z}{\mathbb{Z}}
\newcommand{\Q}{\mathbb{Q}}

\newcommand{\ct}{\C(\!(t)\!)}
\newcommand{\rlt}{\R(\!(t)\!)}
\newcommand{\cL}{\mathcal{L}}
\newcommand{\cO}{\mathcal{O}}
\newcommand{\cM}{\mathcal{M}}
\newcommand{\cF}{\mathcal{F}}
\newcommand{\cU}{\mathcal{U}}
\newcommand{\KX}{K[X]}
\newcommand{\KrX}{K(X)}
\newcommand{\Lam}{\Lambda}
\newcommand{\tf}{T_\forall}
\newcommand{\Pres}{\mathrm{Pr}}
\newcommand{\Presf}{\mathrm{Pr}_\forall}
\DeclareMathOperator{\Th}{Th}
\DeclareMathOperator{\Frac}{Frac}

\DeclareMathOperator{\res}{res}

\DeclareMathOperator{\Spec}{Spec}
\DeclareMathOperator{\tSpec}{\textnormal{$T$-Spec}}
\DeclareMathOperator{\SpecT}{Spec_{T}}
\DeclareMathOperator{\Ker}{Ker}
\DeclareMathOperator{\tp}{tp}
\DeclareMathOperator{\ev}{ev}
\newcommand{\gam}{\gamma}
\newcommand{\rt}[2][]{\sqrt[\leftroot{2}\uproot{3}{#1}]{#2}}

\begin{document}

\title[A Nullstellensatz for complex Laurent series]{A Nullstellensatz for the field of complex Laurent series}

\author{Pablo Cubides Kovacsics}
\address{Departamento de Matemáticas, Universidad de los Andes, Carrera 1 \# 18A-12, Bogotá, Colombia}
\email{p.cubides@uniandes.edu.co}

\author{Sebastián Uribe Rincón}
\address{Departamento de Matemáticas, Universidad de los Andes, Carrera 1 \# 18A-12, Bogotá, Colombia}
\email{sa.uribe@uniandes.edu.co}

\subjclass[2020]{Primary 12L12, 12J10; Secondary 03C60, 14A05}
\keywords{Nullstellensatz, Laurent series, Kochen operator, valued fields, model completeness, spectra}

\begin{abstract}
Let $T$ be the theory of the valued field of complex Laurent series $\ct$ in the language of rings expanded by a constant for $t$ and by predicates for $n$-th powers. We introduce a $t$-adic analogue of Kochen's operator, namely $\gam(x)=x/(1+tx^{2})$, and the associated Kochen ring $\Lam_{L}$ of a field extension $L$ of a model $K$ of $T$. Our main result characterizes the field extensions of $K$ which embed into a model of $T$ (the \emph{formally $T$-adic} fields): $L$ is formally $T$-adic if and only if $1/t\notin\Lam_{L}$. As an application we prove a Nullstellensatz for $\ct$: a polynomial $f\in K[X_{1},\dots,X_{n}]$ vanishes on the zero set of an ideal $I$ if and only if $f^{m}=\sum_{i}\omega_{i}f_{i}$ for some $m\geqslant 1$, generators $f_{i}$ of $I$ and coefficients $\omega_{i}$ in the ring $\Lam_{K(X)}\cdot K[X]$. We derive the resulting correspondence between closed ideals and algebraic sets, and we compare natural incarnations of the associated $T$-spectrum: prime ideals, morphisms into models of $T$, and the space of types of $T$.
\end{abstract}

\maketitle

\section{Introduction}\label{sec:intro}

Hilbert's Nullstellensatz describes, for an algebraically closed field $K$ and an ideal $I$ of the polynomial ring $\KX=K[X_{1},\dots,X_{n}]$, the ideal of all polynomials vanishing on the zero set
\[
Z(I)=\{a\in K^{n}: f(a)=0 \text{ for all } f\in I\}
\]
as the radical $\sqrt{I}$. Over fields which are not algebraically closed the statement fails as stated, but for several important theories of fields it admits a precise substitute in which the radical is replaced by a notion tailored to the arithmetic of the base field. Two classical instances motivate the present work: the real and $p$-adic Nullstellensätze. For the real Nullstellensatz, let $K$ be a real closed field and $I\subseteq \KX$ an ideal. Then a polynomial $f$ vanishes on $Z(I)$ if and only if $f$ belongs to the real radical
\[
\sqrt[r]{I}=\Big\{g\in\KX : g^{2m}+\textstyle\sum_{i=1}^{l}h_{i}^{2}\in I \text{ for some } m\geqslant 1,\ h_{1},\dots,h_{l}\in\KX\Big\},
\]
see for instance \cite{bcr}. For the $p$-adic Nullstellensatz, let $K$ be a $p$-adically closed field with valuation ring $\cO_{K}$. Kochen \cite{Kochen69} introduced the operator
\[
\gam_{p}(x)=\frac{1}{p}\cdot\frac{x^{p}-x}{(x^{p}-x)^{2}-1},
\]
and the associated Kochen ring $\Lam=\big\{a/(1+pb): a,b\in \cO_K[\gam_{p}(K(X))]\big\}$. Jarden and Roquette \cite{jardenroquette} (see also Prestel--Roquette \cite{prestelroquette} and Srhir \cite{srhir, Srhir08}) proved that $f$ vanishes on $Z(I)$ if and only if $f^{m}=\sum_{i}\lambda_{i}f_{i}$ for some $m\geqslant 1$, generators $f_{i}$ of $I$ and $\lambda_{i}\in\Lam\cdot\KX$. In both cases the shape of the radical reflects a characterization of those field extensions of $K$ which embed into a model of the theory of $K$: formally real fields in the first case, formally $p$-adic fields in the second.

The purpose of this article is to develop the analogue, for $\ct$, of the Kochen operator, of the theory of formally $p$-adic fields, and of the $p$-adic Nullstellensatz.

Throughout, we work in the language
\[
\cL=\{+,-,\cdot,{}^{-1},0,1,t\}\cup\{P_{n}: n\geqslant 2\},
\]
where $t$ is a constant symbol and each $P_{n}$ is a unary predicate, interpreted in $\ct$ as the set of $n$-th powers. We let $T=\Th_{\cL}(\ct)$. It is folklore that
$T$ eliminates quantifiers in $\cL$ (\Cref{sec:theory}).

Our first main result characterizes the models of the universal part $\tf$ of $T$ among field extensions of a fixed model. The key tool is the \emph{$t$-Kochen operator}
\[
\gam(x)=\frac{x}{1+tx^{2}},
\]
a natural but not immediate analogue of $\gam_{p}$: since the residue field of $\ct$ is algebraically closed there is no Frobenius available, and $\gam$ operates instead through the parity obstruction ``$v(t)$ is odd''. For a field extension $L$ of $K\models T$ we let $\Lam_{L}=\big\{a/(1+tb):a,b\in \cO_{K}[\gam(L)]\big\}$ be the associated \emph{$t$-Kochen ring} (\cref{def:kochen}), where $\cO_K$ is the valuation ring of $K$. 

\begin{theoremA}[{\cref{thm:characterization}}]
Let $K\models T$ and let $L$ be a field extension of $K$. The following are equivalent:
\begin{enumerate}[label=\textnormal{(\arabic*)}]
\item $L$ is formally $T$-adic over $K$, that is, $L$ admits an $\cL$-structure extending that of $K$ which is a model of $\tf$;
\item there is a valuation $w$ on $L$ extending the valuation of $K$ whose value group $(\Gamma_{w},w(t))$ admits an expansion to a model of $\Presf$, the universal part of Presburger arithmetic;
\item $1/t\notin \cO_{K}[\gam(L)]$;
\item $1/t\notin \Lam_{L}$.
\end{enumerate}
\end{theoremA}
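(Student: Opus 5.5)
We will prove the implications $(1)\Rightarrow(4)\Rightarrow(3)\Rightarrow(2)\Rightarrow(1)$. The implication $(4)\Rightarrow(3)$ is immediate: taking $b=0$ in the definition of $\Lam_{L}$ shows $\cO_{K}[\gam(L)]\subseteq\Lam_{L}$.

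\emph{$(1)\Rightarrow(4)$.} Suppose $L$ carries an $\cL$-structure extending that of $K$ and modelling $\tf$. Then $L$ embeds into some $M\models T$ over $K$; this follows from the fact that $T$ eliminates quantifiers (\Cref{sec:theory}), hence is model complete and every model of $\tf$ embeds in a model of $T$. Let $\cO_{M}$ be the valuation ring of $M$. It is definable in $T$ and extends $\cO_{K}$. The key computation is that $\gam(M)\subseteq\cO_{M}$.
\begin{itemize}
\item If $v(x)\geqslant 0$, then $v(tx^{2})>0$, so $1+tx^{2}$ is a unit and $v(\gam(x))=v(x)\geqslant 0$.
\item If $v(x)<0$, then $v(tx^{2})=1+2v(x)$ is odd, hence nonzero. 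Since the value group of $M$ is a $\Z$-group with $v(t)=1$ the least positive element, $v(x)\leqslant -1$ gives $1+2v(x)<0$. So $v(1+tx^{2})=1+2v(x)$ and $v(\gam(x))=-1-v(x)\geqslant 0$.
\end{itemize}
In both cases $\gam(x)\neq\infty$, because $1+tx^{2}\neq 0$: $-1/t$ is not a square, as its value is odd. Hence $\cO_{K}[\gam(L)]\subseteq\cO_{M}$. Moreover $1+tb$ is a unit of $\cO_{M}$ for $b\in\cO_{M}$, so $\Lam_{L}\subseteq\cO_{M}$, while $1/t\notin\cO_{M}$.

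\emph{$(3)\Rightarrow(2)$.} Put $R=\cO_{K}[\gam(L)]$. Since $1/t\notin R$, $t$ is a nonunit of $R$, so there is a maximal ideal $\mathfrak m\ni t$. By Chevalley's extension theorem there is a valuation ring $\cO_{w}$ of $L$ dominating $R_{\mathfrak m}$, with $w(t)>0$; it extends the valuation of $K$ because $\cO_{K}\subseteq\cO_{w}$. Then $\gam(x)\in\cO_{w}$ for all $x\in L$. We must show that $(\Gamma_{w},w(t))$ embeds in a model of Presburger arithmetic, i.e.\ that $w(t)$ is the least positive element modulo the relevant divisibility conditions. The $n$-divisibility for $n>2$ is inherited from $K$ and can be absorbed by passing to a suitable extension, but parity comes from $\gam$. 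First, there is no $x$ with $0<w(x)<w(t)$: otherwise $y=1/x$ gives $w(ty^{2})=w(t)-2w(x)$, and one needs to check this yields $w(\gam(y))<0$, at least after replacing $x$ by a suitable element. Second, $w(t)\notin 2\Gamma_{w}$ and more generally $\Gamma_{w}\models$ the universal theory with $w(t)$ odd: if $w(t)=2w(x)$, take $y$ with $ty^{2}$ of value $0$ and residue $-1$ to get a pole. More precisely, I would use $\gam$ to show that, for every $x$, $w(tx^{2})\neq0$ or its residue is not $-1$; since the residue field of $K$, namely $\C$, is algebraically closed, this rules out $w(tx^{2})=0$ altogether.

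\emph{$(2)\Rightarrow(1)$.} Given such $w$, I would pass to a henselization and then to a maximal immediate extension with value group enlarged to a $\Z$-group containing $(\Gamma_{w},w(t))$ as a $\Presf$-substructure, and residue field enlarged to an algebraically closed field. By the Ax--Kochen--Ershov principle for equicharacteristic-$0$ henselian fields (quantifier elimination of \Cref{sec:theory}), the resulting field is a model of $T$ containing $L$. One has to check that the $P_{n}$ restrict correctly to $K$; this should follow because $K\preceq$ that model by model completeness.

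\textbf{Main obstacle.} The hardest step is $(3)\Rightarrow(2)$: showing that all the Presburger-type conditions on $\Gamma_{w}$ relative to $w(t)$ (discreteness and every congruence condition, not just parity) follow from integrality of $\gam$. I expect the argument to exploit that the $n$-th roots needed already lie in $K$ or in an algebraic extension, and to reduce to the single parity obstruction that $\gam$ detects.
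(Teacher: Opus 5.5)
\textbf{The genuine gap is in $(3)\Rightarrow(2)$.} It sits at the step you flag as the main obstacle, and your plan for it points the wrong way. Condition (2) is a statement about $\Gamma_w$ itself. Enlarging $L$ only helps if you already know that $\Gamma_w$ sits inside a $\Z$-group in which $w(t)$ stays least positive; naively adjoining roots, e.g.\ of $t$, destroys this. The $n$-th roots in $K$ are irrelevant, and there are no extra congruence conditions to verify, because the predicates $D_n$ are yours to choose.

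The missing idea is the purely group-theoretic \cref{lem:expansion}. If $\varepsilon$ is the least positive element of an ordered abelian group $\Gamma$, then $(\Gamma,\varepsilon)$ embeds into a $\Z$-group $M$ with $\varepsilon\mapsto 1$, and one sets $D_n=nM\cap\Gamma$. The paper proves this in four steps: $\Z\varepsilon$ is convex; each finitely generated subgroup splits as $\bar G\times\Z$ ordered lexicographically; this embeds into the $\Z$-group $(\bar G\otimes\Q)\times\Z$; compactness finishes. Once discreteness is established, nothing else is needed. In particular no separate parity argument is required, since $w(t)\notin 2\Gamma_w$ already follows from minimality.

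\textbf{The discreteness step is only announced.} You need the case split on the sign of $w(t)+2w(y)$ for $-w(t)<w(y)<0$. You also need the degenerate case $1+ty^{2}=0$, where $\gam(y)$ is undefined and one passes to $y+1$ using $w(2)=0$; this is \cref{lem:gap}. Your parity paragraph is muddled in three ways. First, $w(t)=2w(x)$ is already a case of $0<w(x)<w(t)$. Second, no residue condition is needed: if $w(1+ty^{2})\geqslant 0$ then $w(\gam(y))\leqslant w(y)<0$. Third, the appeal to $\C$ is wrong, since $K$ is an arbitrary model and the relevant residue field is $k_w$, which need not be algebraically closed.

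\textbf{Showing $w$ extends $v$.} The inclusion $\cO_K\subseteq\cO_w$ only makes $\cO_w\cap K$ a coarsening of $\cO_K$. To conclude that $w$ extends $v$ you must use $w(t)>0$ together with minimality of $v(t)$. Minimality forces every nontrivial convex subgroup of $\Gamma_K$ to contain $v(t)$, so $t$ would be a unit in any proper coarsening.

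\textbf{The other implications are essentially sound.} $(1)\Rightarrow(4)$ is the computation of \cref{prop:gamma-into-O}. In $(2)\Rightarrow(1)$, checking the predicates $P_n$ via model completeness of $T_t$ (\cref{thm:qe}(4)) is a valid substitute for the paper's purity lemma. However, ``maximal immediate extension with value group enlarged'' is self-contradictory. It must be replaced by an actual construction of a henselian extension with algebraically closed residue field and with value group a $\Z$-group in which $w(t)$ is still least positive, as in \cref{thm:extension}.
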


Our second main result is the corresponding Nullstellensatz, which we state in the language of $A$-radicals of \cite{CubidesNull}: for a subring $\KX\subseteq A\subseteq\KrX$ and an ideal $I=\langle f_{1},\dots,f_{l}\rangle$ of $\KX$, the \emph{$A$-radical} $\rt[A]{I}$ is the set of $g\in\KX$ such that $g^{m}=\sum_{i=1}^{l}\omega_{i}f_{i}$ for some $m\geqslant 1$ and some $\omega_{1},\dots,\omega_{l}\in A$. Here the relevant coefficient ring is $R=\Lam_{\KrX}\cdot\KX$.

\begin{theoremB}[{\cref{thm:nss}}]
Let $K\models T$ and let $I$ be an ideal of $\KX$. A polynomial $f\in\KX$ vanishes on $Z(I)$ if and only if $f\in\rt[R]{I}$.
\end{theoremB}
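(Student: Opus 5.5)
The plan follows the Jarden--Roquette scheme for the $p$-adic Nullstellensatz, with Theorem A as the key input and model completeness of $T$ (a consequence of quantifier elimination) supplying the transfer. The easy direction comes first. Suppose $f^{m}=\sum_i\omega_i f_i$ with $\omega_i\in R=\Lam_{\KrX}\cdot\KX$, and let $a\in Z(I)$. Each $\omega_i$ is a finite sum of products $\lambda h$ with $h\in\KX$ and $\lambda=c/(1+td)$, where $c,d\in\cO_K[\gam(\KrX)]$. We show that every such $\lambda$ can be \emph{evaluated} at $a$ to an element of $\cO_K$. The reason is that for any $r\in K$ the element $1+tr^{2}$ is nonzero, since $v(tr^2)$ is odd. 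Moreover $\gam(r)\in\cO_K$: if $v(r)\geqslant 0$ this is clear, and if $v(r)<0$ then $v(\gam(r))=-v(r)-1\geqslant 0$. Rational functions in $\KrX$ need not be defined at $a$, so we use the place/specialization formalism. The map $\KX\to K$, $X\mapsto a$, extends to a place of $\KrX$ over $K$ whose valuation ring contains $\Lam_{\KrX}$. This is the standard argument that $\Lam_{\KrX}$ lies in the intersection of the valuation rings of the $T$-adic places, and it comes from (1)$\Rightarrow$(4) of Theorem A applied to a suitable extension. Applying that place to the identity gives $f(a)^m=\sum\omega_i(a)f_i(a)=0$.

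The hard direction is a contrapositive. Suppose $f\notin\rt[R]{I}$; we want a point of $Z(I)$ where $f\neq0$. Following \cite{CubidesNull}, the first reduction is to a prime ideal. The set of ideals $J\supseteq I$ of $\KX$ with $f\notin \rt[R]{J}$ is closed under unions of chains, so it has a maximal element $\mathfrak p$ by Zorn. A standard argument shows $\mathfrak p$ is prime and \emph{$R$-radical}, meaning that $R\mathfrak p\cap \KX=\mathfrak p$ in a suitable localized sense. The closure property of $R$ needed for primality is that $R$ is a ring containing $\KX$ in which $1+tb$ is invertible for $b\in\cO_K[\gam(\cdot)]$.

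Next let $L=\Frac(\KX/\mathfrak p)$ and $\bar f\neq 0$ the image of $f$. The central claim is that $1/t\notin\Lam_{L}$, i.e.\ $L$ is formally $T$-adic over $K$. If instead $1/t\in\Lam_L$, we write it as a rational identity in $\gam$ of elements of $L$. Lifting these elements to $\KrX$ and clearing denominators through $\mathfrak p$ produces an expression $1\in R\cdot\mathfrak p+\ldots$, which forces $f^m\in R\mathfrak p$ and contradicts the choice of $\mathfrak p$. This lifting is where the shape of the Kochen ring matters. Denominators $1+tb$ in $L$ lift to denominators that are units in $R$. The delicate point is lifting $\gam(\bar g)$ when $1+t\bar g^{2}$ vanishes modulo $\mathfrak p$. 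That cannot happen, however, since $1+tg^2$ is invertible in $R$ and $\mathfrak p$ is $R$-radical, so $1\notin R\mathfrak p$. I expect this lifting/compatibility step, showing that the image of $\Lam_{\KrX}$ under the quotient controls $\Lam_L$, to be the main obstacle. I would handle it by working with localizations $R_{\mathfrak p}$ and checking that $\gam$ commutes with reduction modulo $\mathfrak p$ on the relevant elements.

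Given $1/t\notin\Lam_L$, Theorem A (4)$\Rightarrow$(1) puts a $\tf$-structure on $L$ extending $K$. Hence $L$ embeds over $K$ into some model $M\models T$. By model completeness $K\preceq M$. In $M$ the point $\bar X=(\bar X_1,\dots,\bar X_n)$ satisfies $f_1=\dots=f_l=0$ and $f\neq0$. This existential statement has parameters in $K$, so it transfers down to $K$, which yields $a\in Z(I)$ with $f(a)\neq0$, as required.
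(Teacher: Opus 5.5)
Your overall architecture is sound, and it differs from the paper's in how it reduces to primes. You take $\mathfrak p\supseteq I$ maximal with $f\notin\rt[R]{\mathfrak p}$ and then obtain a witness point by existential transfer. Primality of $\mathfrak p$ follows from the usual product argument, which uses only that $R$ is a ring containing $\KX$, not the invertibility of $1+tb$. The paper instead shows that $\rt[R]{I}$ is an $R$-ideal whose minimal primes are $R$-ideals (\cref{prop:T-radical-props}), and that prime $R$-ideals are closed (\cref{cor:T-ideal-closed}). Your route is shorter for Theorem~B alone. The paper's route yields the equivalence of ``prime $R$-ideal'', ``formally $T$-adic residue field'' and ``closed'', which is reused in \cref{cor:bijections} and \cref{sec:spectrum}.

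Your easy direction via the composite valuation $w=v\circ\varphi$ also works. Since $\Gamma_K$ is convex in $\Gamma_w$, $w(t)$ is minimal, and \cref{lem:expansion} with \cref{prop:gamma-into-O}(3) gives $\Lam_{\KrX}\subseteq\cO_w\subseteq\cO_\varphi$. This holds only for $K_\infty$-valued places such as $P_{a,\sigma}$. A Chevalley place into $\bar K_\infty$ extending $X\mapsto(0,0)$ can send $X_1/X_2$ to $\theta/t$ with $\theta^2=-t$, and then $\gam(X_1/X_2)$ has value $\infty$.

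The genuine gap is in the central step, $1/t\notin\Lam_L$ for $L=\Frac(\KX/\mathfrak p)$. This is the paper's \cref{thm:prime-T-ideal} and the heart of the argument. Your ``delicate point'' is vacuous, because $\gam(L)$ only involves $\xi$ with $1+t\xi^{2}\neq0$. Your resolution of it is also false. $1+tg^{2}$ is a unit of $R$ for $g\in\KX$, since $(1+tg^{2})^{-1}=1-tg\,\gam(g)$. It is not a unit for general $g\in\KrX$: for example $1+t/X_1^{2}\notin R$, since it takes the value $\infty$ at any evaluation place at a point with $a_1=0$ (\cref{lem:places-R}(3)).

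What is actually needed is a choice of lifts in $\KX_{\mathfrak p}$ on which reduction commutes with $\gam$. Write $\xi=\bar p/\bar q$ with $q\notin\mathfrak p$. The polynomial $d=q^{2}+tp^{2}$ maps to $\bar q^{2}(1+t\xi^{2})\neq0$, so $d\notin\mathfrak p$, and $\gam(p/q)=pq/d\in\KX_{\mathfrak p}$ reduces to $\gam(\xi)$. This produces $a=F_1/G_1$ and $b=F_2/G_2$ in $\cO_K[\gam(\KrX)]$ lifting $\bar a,\bar b$, with $G_1,G_2\notin\mathfrak p$.

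Second, the contradiction is not of the form ``$1\in R\mathfrak p+\cdots$''. The relation $1+t\bar b=t\bar a$ says that $u:=1+t(b-a)=c/(G_1G_2)$ with $c\in\mathfrak p$. To get $u^{-1}\in\Lam_{\KrX}$ you need $u\neq0$, i.e.\ $1/t\notin\cO_K[\gam(\KrX)]$. This is the missing ingredient: that $\KrX$ itself is formally $T$-adic (\cref{cor:gauss-T-adic}). Alternatively, use your own easy direction: a $K$-rational place applied to $1/t=a-b$ would put $1/t$ in $\cO_K$. Your standing assumption that $1+tb$ is invertible in $R$ for $b\in\cO_K[\gam(\KrX)]$ silently presupposes exactly this.

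With $u$ invertible, $G_1G_2=c\,u^{-1}\in\mathfrak pR\cap\KX$, although $G_1G_2\notin\mathfrak p$. This contradicts $\rt[R]{\mathfrak p}=\mathfrak p$, or directly the maximality of $\mathfrak p$, since $(\mathfrak p+(G_1G_2))R=\mathfrak pR$. With these two points supplied, your proof is complete.
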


As in the classical situations, Theorem~B yields a dictionary between algebra and geometry: $R$-ideals (ideals with $I=\rt[R]{I}$) correspond bijectively to algebraic subsets of $K^{n}$, prime $R$-ideals to irreducible algebraic sets, and maximal $R$-ideals to points (\Cref{cor:bijections}). In the final part of the article we introduce the \emph{$T$-spectrum} of a $K$-algebra, in the spirit of the real spectrum and of the $p$-adic spectrum of Robinson, Bélair and Srhir \cite{Robinson86,Belair90,Srhir08}, and we compare its three natural incarnations --- prime ideals, morphisms into models of $T$, and types (\cref{sec:spectrum}).

Our proofs follow the strategy that Srhir \cite{srhir} employed for the $p$-adic Nullstellensatz, itself built on Jarden--Roquette \cite{jardenroquette}: an abstract Nullstellensatz for model complete theories of fields (\Cref{sec:abstract}), which reduces Theorem~B to the characterization of formally $T$-adic fields given by Theorem~A. The heart of the article is thus the proof of Theorem~A, which combines the $t$-Kochen operator with a valuation-theoretic analysis of value groups modeled on Presburger arithmetic. It has two main ingredients: \Cref{lem:gap} showing that the operator forces $w(t)$ to be the smallest positive element of the value group, and \Cref{lem:expansion} showing that this condition is exactly what is needed for the value group to model $\Presf$.

Let us mention that a uniform perspective on Nullstellensätze of this kind, in which the coefficient ring $R$ is replaced by a holomorphy ring of $K$-rational places over an arbitrary large field, is developed by the authors in \cite{CubidesNull}. The present article concentrates on the concrete description available for $\ct$ via the $t$-Kochen operator, which is finer than the holomorphy-ring description and is specific to this theory.

The article is organized as follows. \Cref{sec:prelim} collects preliminaries on valued fields, places and Presburger arithmetic. \Cref{sec:theory} gathers results about $\ct$ and its model theory. \Cref{sec:abstract} contains the abstract Nullstellensatz for model complete theories. \Cref{sec:formally} studies formally $T$-adic fields and proves the valuation-theoretic extension results. \Cref{sec:kochen} introduces the $t$-Kochen operator and proves Theorem~A. \Cref{sec:nss} proves Theorem~B and its corollaries. \Cref{sec:spectrum} develops the $T$-spectrum, and \cref{sec:questions} gathers open questions, notably concerning $\rlt$.

\subsection*{Acknowledgements and AI Disclaimer} All proofs here presented have been happily humanly produced. The use of AI has been restricted here to editorial corrections and proof-reading corrections, where it suggested modifications to shorten/improve presentation of some of the proofs. The project was financed by Universidad de los Andes FAPA grant (FAPA-599-22).  

\section{Preliminaries}\label{sec:prelim}

Throughout the article $K$ denotes a field of characteristic zero, $X=(X_{1},\dots,X_{n})$ a tuple of variables, $\KX=K[X_{1},\dots,X_{n}]$ the polynomial ring and $\KrX$ its fraction field. We assume familiarity with valued fields, places and basic model theory; most of our notation is standard and follows \cite{EP05,Endler72} for valuation theory and \cite{Marker02,Hodges93} for model theory. This section fixes notation and isolates the few less standard notions we will need.

\subsection{Valued fields and places}\label{subsec:valued}

For a valuation $v$ on a field $F$ we write $\cO_{v}$, $\cM_{v}$, $k_{v}$, $\Gamma_{v}$ and $\res\colon\cO_{v}\to k_{v}$ for its valuation ring, maximal ideal, residue field, value group and residue map. An extension of valued fields $(K,v)\subseteq(L,w)$ is a field extension with $\cO_{v}=\cO_{w}\cap K$; then $\Gamma_{v}\leqslant\Gamma_{w}$ and $k_{v}\subseteq k_{w}$ canonically. We let $K^{h}$ denote the henselization of $(K,v)$.

For a field $F$, we equip $F_{\infty}=F\cup\{\infty\}$ with the usual partial arithmetic: $a+\infty=\infty$ for $a\in F$; $a\cdot\infty=\infty$ for $a\in F_{\infty}\setminus\{0\}$; $0^{-1}=\infty$, $\infty^{-1}=0$, $-\infty=\infty$; while $\infty+\infty$ and $0\cdot\infty$ are undefined. A \emph{place} $\varphi\colon F_{\infty}\to K_{\infty}$ is a map with $\varphi(1)=1$ which preserves sums and products whenever the corresponding operations on the values are defined; see \cite[\S2.2]{EP05} or \cite[\S8]{Endler72}.

We will freely use the basic properties of places \cite[\S8]{Endler72}: the preimage $\varphi^{-1}(K)$ is a valuation ring of $F$ with maximal ideal $\varphi^{-1}(0)$; for $a\in F^{\times}$, $\varphi(a)=\infty$ if and only if $\varphi(a^{-1})=0$; and the composition of places is a place.

Given $f\in\KX$, $b\in K$ and $1\leqslant l\leqslant n$, write $\ev_{b,l}(f)=f(X_{1},\dots,X_{l-1},b,X_{l+1},\dots,X_{n})$. Two polynomials $f,g$ are \emph{$l$-coprime} if they are coprime as polynomials in $X_{l}$ over the ring $K[X_{1},\dots,\widehat{X_{l}},\dots,X_{n}]$, where $\widehat{X_{l}}$ means the variable $X_l$ is omitted. 

\begin{definition}\label{def:eval-places}
Let $b\in K$ and $1\leqslant l\leqslant n$. The \emph{$l$-evaluation place at $b$} is the map
\[
P_{b,l}\colon \KrX_{\infty}\longrightarrow K(X_{1},\dots,\widehat{X_{l}},\dots,X_{n})_{\infty},\qquad
P_{b,l}\Big(\frac{f}{g}\Big)=
\begin{cases}
\dfrac{\ev_{b,l}(f)}{\ev_{b,l}(g)} & \text{if } \ev_{b,l}(g)\neq 0,\\[4pt]
\infty & \text{otherwise,}
\end{cases}
\]
where $f,g$ are $l$-coprime. For $a=(a_{1},\dots,a_{n})\in K^{n}$ and a permutation $\sigma$ of $\{1,\dots,n\}$, the \emph{evaluation place associated to $(a,\sigma)$} is the composition
\[
P_{a,\sigma}=P_{a_{\sigma(n)},\sigma(n)}\circ\cdots\circ P_{a_{\sigma(1)},\sigma(1)}\colon \KrX_{\infty}\longrightarrow K_{\infty}.
\]
\end{definition}

Each $P_{b,l}$ is a place, hence so is each $P_{a,\sigma}$. Moreover $P_{a,\sigma}$ restricts to the identity on $K$, and for every polynomial $f\in\KX$ one has $P_{a,\sigma}(f)=f(a)$.

\begin{remark}\label{rem:permutations}
The permutation $\sigma$ in \cref{def:eval-places} genuinely matters: for $h=X_{1}/X_{2}$ and $a=(0,0)$, evaluating $X_{1}$ first yields $P_{a,\mathrm{id}}(h)=0$, whereas evaluating $X_{2}$ first yields $\infty$. Distinct pairs $(a,\sigma)$ may thus give distinct places with different associated valuation rings $\cO_{P_{a,\sigma}}$.
\end{remark}

\subsection{Radicals and zero sets}\label{subsec:radicals}

Inspired by the $p$-adic Nullstellensatz, we will use the following relative notion of radical.

\begin{definition}[{\cite{CubidesNull}}]\label{def:A-radical}
Let $A$ be a subring of $\KrX$ containing $\KX$ and let $I=\langle f_{1},\dots,f_{l}\rangle$ be an ideal of $\KX$. The \emph{$A$-radical} of $I$ is
\[
\rt[A]{I}=\Big\{g\in\KX \;:\; g^{m}=\sum_{i=1}^{l}\omega_{i}f_{i}\ \text{ for some } m\geqslant1 \text{ and } \omega_{1},\dots,\omega_{l}\in A\Big\}.
\]
\end{definition}

\begin{remark}\label{rem:A-radical}
Equivalently, $\rt[A]{I}=\KX\cap\sqrt{IA}$, where $IA$ denotes the ideal of $A$ generated by $I$ and $\sqrt{IA}$ its radical in $A$. In particular $\rt[A]{I}$ is a radical ideal of $\KX$ containing $I$, and it does not depend on the chosen generators. Note that $\rt[{\KX}]{I}=\sqrt{I}$ recovers the classical radical.
\end{remark}

Recall also that in a noetherian ring every radical ideal $I$ is the intersection $I=P_{1}\cap\dots\cap P_{m}$ of the finitely many primes minimal over $I$, and $P_{i}\not\subseteq P_{j}$ for $i\neq j$; see \cite[Chapters 4 and 7]{AM69}.

\begin{definition}\label{def:notation-algebraic}
For $S\subseteq\KX$ and $B\subseteq K^{n}$ set
\[
Z(S)=\{a\in K^{n}: f(a)=0\text{ for all }f\in S\},\qquad
J(B)=\{f\in\KX: f(a)=0\text{ for all }a\in B\}.
\]
A subset of $K^{n}$ of the form $Z(I)$ is an \emph{algebraic set}; an ideal $I$ with $J(Z(I))=I$ is called \emph{closed}. 
\end{definition}

\begin{fact}\label{lem:JV-props}
Let $I$ be an ideal of $\KX$. Then:
\begin{enumerate}[label=\textnormal{(\arabic*)}]
\item $J(Z(I))$ is a radical closed ideal;
\item every prime ideal minimal over $J(Z(I))$ is closed.
\end{enumerate}
\end{fact}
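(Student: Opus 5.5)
The plan is to derive both parts from the elementary Galois-connection properties of $Z$ and $J$, together with the decomposition of radical ideals in the noetherian ring $\KX$ recalled above.

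For (1), note first that $J(B)$ is a radical ideal for every $B\subseteq K^{n}$. It is clearly an ideal, and if $f^{m}(a)=0$ for all $a\in B$ then $f(a)=0$ because $K$ is a field. Next we check the two inclusion-reversing facts: $S\subseteq J(Z(S))$ and $B\subseteq Z(J(B))$. From these we obtain the standard identity $Z(J(Z(I)))=Z(I)$. Indeed, $Z(I)\subseteq Z(J(Z(I)))$ is the second inclusion with $B=Z(I)$. Conversely, $I\subseteq J(Z(I))$ and $Z$ reverses inclusions, so $Z(J(Z(I)))\subseteq Z(I)$. Applying $J$ gives $J(Z(J(Z(I))))=J(Z(I))$, which says exactly that $J(Z(I))$ is closed.

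For (2), put $P_{0}=J(Z(I))$. By (1) and the recalled fact, $P_{0}=P_{1}\cap\dots\cap P_{m}$, where $P_{1},\dots,P_{m}$ are the minimal primes over $P_{0}$. Fix $i$. We have $P_{i}\subseteq J(Z(P_{i}))$, so it suffices to prove the reverse inclusion. For each $j\neq i$ choose $g_{j}\in P_{j}\setminus P_{i}$, which is possible since $P_{j}\not\subseteq P_{i}$, and set $g=\prod_{j\neq i}g_{j}$. Then $g\notin P_{i}$ because $P_{i}$ is prime, and $g\in P_{j}$ for all $j\neq i$. If $m=1$, take $g=1$.

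Now let $f\in J(Z(P_{i}))$; we show $fg\in P_{0}$. Since $P_{0}$ is closed, $P_{0}=J(Z(P_{0}))$, so it is enough to show that $fg$ vanishes on $Z(P_{0})$. We have $Z(P_{0})=Z(P_{1}\cap\dots\cap P_{m})=Z(P_{1})\cup\dots\cup Z(P_{m})$. The inclusion $\supseteq$ is clear. For $\subseteq$, take $a\notin\bigcup_{j} Z(P_{j})$ and pick $h_{j}\in P_{j}$ with $h_{j}(a)\neq 0$; then $\prod_{j} h_{j}$ lies in the intersection and does not vanish at $a$. On $Z(P_{i})$ the polynomial $f$ vanishes. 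On $Z(P_{j})$ with $j\neq i$ the polynomial $g$ vanishes, since $g\in P_{j}$. Hence $fg\in P_{0}\subseteq P_{i}$, and since $g\notin P_{i}$ and $P_{i}$ is prime, $f\in P_{i}$. Thus $J(Z(P_{i}))=P_{i}$.

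The only step with real content is producing the separating element $g$ and verifying that $Z$ turns finite intersections of ideals into unions. Both steps are routine here, so I expect no serious obstacle.
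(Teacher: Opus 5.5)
Your proof is correct. Part (1) is the same formal Galois-connection argument the paper has in mind. For (2) you take a different route from the one the paper indicates.

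The paper, via the cited result of Bochnak--Coste--Roy, argues geometrically. It decomposes $Z(I)=V_{1}\cup\dots\cup V_{k}$ into irreducible components and notes that each $J(V_{j})$ is a prime closed ideal (closed because $J(Z(J(V)))=J(V)$). It then writes $J(Z(I))=\bigcap_{j}J(V_{j})$ irredundantly and identifies the $J(V_{j})$ with the primes minimal over $J(Z(I))$.

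You instead start from the ring-theoretic decomposition of the radical ideal $P_{0}$ into its minimal primes. You then transfer closedness from $P_{0}$ to each $P_{i}$ using a separating element $g\in\bigcap_{j\neq i}P_{j}\setminus P_{i}$.

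Your argument has several advantages. It is shorter and self-contained, and it needs no existence or irredundancy theory for irreducible components of algebraic sets. It also proves slightly more: every minimal prime over any closed ideal is closed. Finally, it is exactly the mechanism the paper itself uses later in \cref{prop:T-radical-props}(3) for $R$-ideals, with $hg\in\rt[R]{I}=I$ playing the role of your $fg\in J(Z(P_{0}))=P_{0}$. This makes the parallel between closed ideals and $R$-ideals transparent.

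The geometric route, for its part, packages (2) together with the correspondence between minimal primes and irreducible components, which is what the dictionary in \cref{thm:bijections-abstract}(2) rests on. Your argument recovers that correspondence as well. You show $Z(I)=\bigcup_{i}Z(P_{i})$ with each $J(Z(P_{i}))=P_{i}$ prime, so each $Z(P_{i})$ is irreducible. Pairwise incomparability of the closed $P_{i}$ then gives $Z(P_{i})\not\subseteq Z(P_{j})$ for $i\neq j$.
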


These facts are well known: (1) is formal, and (2) corresponds, via the correspondence between closed ideals and algebraic sets, to the decomposition of an algebraic set into irreducible components; see e.g.\ \cite[Proposition 2.8.2]{bcr} where the result is stated there over a real closed field but remains valid, with the same proof, over an arbitrary field.

\subsection{Model-theoretic background}\label{subsec:model-theory}

Let $\cL_{0}$ be a first-order language, $T_{0}$ an $\cL_{0}$-theory and $\mathbf{M}$ an $\cL_{0}$-structure. We write $(T_{0})_\forall$ for the set of universal $\cL_{0}$-sentences implied by $T_{0}$, and $\Delta(\mathbf{M})$ for the atomic diagram of $\mathbf{M}$. 


\begin{fact}\label{lem:mc-maps}
Let $T_{0}$ be a model complete theory in a language containing the language of rings, all of whose models are fields. Let $\mathbf{K},\mathbf{L}\models T_{0}$ with underlying fields $K$ and $L$, suppose $K\subseteq L$ is an $\cL_0$-substructure, and let $I$ be a proper ideal of $\KX$. If there is a $K$-algebra homomorphism $\varphi\colon\KX/I\to L$, then there is a $K$-algebra homomorphism $\psi\colon\KX/I\to K$.
\end{fact}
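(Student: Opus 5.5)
The plan is to turn the homomorphism $\varphi$ into a point of $Z(I)$ in $L^{n}$, encode the existence of such a point as an existential sentence with parameters in $K$, and then transfer that sentence down to $K$ using model completeness.

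First, since $\KX$ is noetherian, I fix generators $I=\langle f_{1},\dots,f_{l}\rangle$. Put $b=(b_{1},\dots,b_{n})$ with $b_{j}=\varphi(X_{j}+I)\in L$. Because $\varphi$ is a $K$-algebra homomorphism, we get $f_{i}(b)=\varphi(f_{i}+I)=0$ for every $i$. Now consider the $\cL_{0}$-formula with parameters from $K$,
\[
\theta\;:=\;\exists x_{1}\cdots\exists x_{n}\ \bigwedge_{i=1}^{l} f_{i}(x)=0,
\]
where the coefficients of the $f_{i}$ are parameters from $K$ and each $f_{i}(x)=0$ is a ring-language atomic formula. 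The tuple $b$ witnesses $\mathbf{L}\models\theta$.

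Next comes the transfer step. Since $T_{0}$ is model complete and $\mathbf{K}\subseteq\mathbf{L}$ is an $\cL_{0}$-substructure with both structures models of $T_{0}$, the inclusion is an elementary embedding. In particular $\mathbf{K}\models\theta$, so there is $a\in K^{n}$ with $f_{i}(a)=0$ for all $i$. Only existential sentences are needed here, so it would even suffice that $\mathbf{K}$ is existentially closed in $\mathbf{L}$.

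Finally, I build $\psi$. Let $\psi\colon\KX/I\to K$ be the evaluation map $g+I\mapsto g(a)$. It is well defined because every element of $I$ is a $\KX$-combination of the $f_{i}$, all of which vanish at $a$. It is clearly a $K$-algebra homomorphism, since it fixes $K$ and respects sums and products. Properness of $I$ plays no real role beyond guaranteeing that $\KX/I\neq0$: the existence of $\varphi$ already forces $I$ to be proper, because $\varphi(1)=1\neq 0$. The only substantive step is the middle one, that model completeness gives elementarity of substructure inclusions between models; everything else is bookkeeping. The hypothesis that models are fields is not really needed beyond ensuring that the polynomial equations are expressible in the language.
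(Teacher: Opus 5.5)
Your proof is correct. The paper records this as a Fact without proof, and your argument is the standard one: model completeness makes $\mathbf{K}\subseteq\mathbf{L}$ elementary, so the sentence $\exists x\,\bigwedge_i f_i(x)=0$ with parameters in $K$ descends from $\mathbf{L}$ to $\mathbf{K}$, and evaluation at the resulting point gives $\psi$. This is the same transfer mechanism the paper uses in the proof of \cref{thm:model-implies-closed}.

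One small correction to your closing remark: the field hypothesis is genuinely used. It is what makes $\KX$ noetherian by Hilbert's basis theorem, so $I$ is finitely generated and $\theta$ is a single first-order sentence. Expressibility of polynomial equations, by contrast, comes for free from the ring language.
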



Let $\cL_{\Pres}=\{+,-,<,0,1\}\cup\{D_{n}:n\geqslant2\}$, where each $D_{n}$ is a unary predicate, and let $\Pres$ (\emph{Presburger arithmetic}) be the theory of $\mathbf{Z}=(\Z,+,-,<,0,1,(n\Z)_{n})$ in this language. The theory $\Pres$ eliminates quantifiers and admits a well-known recursive axiomatization \cite[\S3.1]{Marker02}; its models are called \emph{$\Z$-groups}. 
We write $\Presf$ for the universal part of $\Pres$.
We will also use the following refinement \cite[Lemmas 3.1.19 and 3.1.20]{Marker02}: every $\mathbf{G}=(G,+,-,<,0,1,(D_{n})_{n})\models\Presf$ embeds into a \emph{minimal} $\Z$-group, namely the subgroup
\[
H=\big\{g/n \;:\; g\in G,\ n\geqslant1,\ \mathbf{G}\models D_{n}(g)\big\}
\]
of the divisible hull of $G$, which moreover satisfies $nH\cap G=D_{n}^{\mathbf{G}}$ for all $n\geqslant 2$.

\section{The theory of \texorpdfstring{$\ct$}{C((t))}}\label{sec:theory}

Recall that for a field $k$ the Laurent series field $k(\!(t)\!)$ is henselian for the $t$-adic valuation $v(\sum_{i} a_{i}t^{i})=\min\{i:a_{i}\neq0\}$, with value group $\Z$ and residue field $k$ \cite[Chapter 4]{EP05}. In particular $\ct$ is a henselian valued field of equicharacteristic $0$ with algebraically closed residue field, and $v(t)=1$ is the smallest positive element of the value group.

We work in the one-sorted languages
\[
\cL_{t}=\{+,-,\cdot,0,1,t\},\qquad
\cL=\cL_{t}\cup\{{}^{-1}\}\cup\{P_{n}:n\geqslant2\},
\]
where $t$ is a constant symbol, ${}^{-1}$ a unary function symbol (with the convention $0^{-1}=0$) and each $P_{n}$ a unary predicate, interpreted in $\ct$ as the set of $n$-th powers. We set
\[
T=\Th_{\cL}(\ct),\qquad T_{t}=\Th_{\cL_{t}}(\ct).
\]
The starting point is the folklore observation that the valuation ring of $\ct$ is existentially definable in $\cL_{t}$; we recall it without proof (both directions are immediate from Hensel's lemma together with the fact that minimality of $w(t)$ forces $w(t)\notin2\Gamma_{w}$; compare \cite[p.~286]{Robinson86} for the $p$-adic analogue).

\begin{fact}\label{prop:def-valuation}
Let $(L,w)$ be a henselian valued field with $\operatorname{char}(k_{w})\neq 2$, let $t\in L^{\times}$ and suppose that $w(t)$ is the smallest positive element of $\Gamma_{w}$. Then for every $x\in L$,
\[
w(x)\geqslant 0\quad\Longleftrightarrow\quad L\models\exists y\,(y^{2}=1+tx^{2}).
\]
In particular this holds for $L=\ct$.
\end{fact}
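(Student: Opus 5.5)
We prove the two implications of \cref{prop:def-valuation} separately. Throughout, (1) will use Hensel's lemma and (2) will use that $w(t)$ is odd in the sense that $w(t)\notin 2\Gamma_{w}$.

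\emph{Forward direction.} Assume $w(x)\geqslant 0$. Then $w(tx^{2})=w(t)+2w(x)>0$, so $1+tx^{2}$ is a unit with residue $1$. Consider the polynomial $Y^{2}-(1+tx^{2})$ over $\cO_{w}$. Its reduction $Y^{2}-1$ has the simple root $1$, because its derivative there is $2$, which is nonzero since $\operatorname{char}(k_{w})\neq 2$. Henselianity of $(L,w)$ then lifts this root to some $y\in L$ with $y^{2}=1+tx^{2}$.

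\emph{Backward direction.} Suppose $w(x)<0$ and that $y^{2}=1+tx^{2}$; we derive a contradiction.

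First we check that $w(tx^{2})<0$. Since $w(x)<0$ we have $-w(x)>0$, so minimality of $w(t)$ gives $-w(x)\geqslant w(t)$. Hence
\[
w(tx^{2})=w(t)+2w(x)\leqslant w(t)-2w(t)=-w(t)<0.
\]
It follows that $w(1+tx^{2})=w(tx^{2})$, and therefore $2w(y)=w(t)+2w(x)$. This gives $w(t)=2(w(y)-w(x))\in 2\Gamma_{w}$.

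It remains to show $w(t)\notin 2\Gamma_{w}$. If $w(t)=2\delta$ for some $\delta\in\Gamma_{w}$, then $\delta>0$, and since $w(t)-\delta=\delta>0$ we get $0<\delta<w(t)$. This contradicts the minimality of $w(t)$, so $w(x)\geqslant 0$ as claimed.

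\emph{The case $L=\ct$.} The final assertion holds because $\ct$ is henselian with residue field $\C$, and $v(t)=1$ is the least positive element of the value group $\Z$.

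The only mildly delicate step is the backward direction. There one must first force $w(tx^{2})<0$ using minimality of $w(t)$, and only then use the parity of $w(t)$ to reach the contradiction.
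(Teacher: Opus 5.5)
Your proof is correct and follows exactly the route the paper indicates: the paper states this as a folklore fact without proof, noting only that both directions follow from Hensel's lemma and from the fact that minimality of $w(t)$ forces $w(t)\notin 2\Gamma_{w}$. Your argument fills this in, using Hensel's lemma for the forward direction, and for the converse the estimate $w(tx^{2})\leqslant -w(t)<0$ combined with that parity obstruction.
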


Since the valuation of a model of $T$ is thus definable by the $\cL_{t}$-formula $\varphi_{v}(x):=\exists y\,(y^{2}=1+tx^{2})$, we will abuse notation and freely use the symbol $v$ inside $\cL_{t}$- and $\cL$-formulas, writing $v(x)\geqslant0$, $v(x)=0$, $v(x)\geqslant v(y)$, $v(x)>v(y)$, etc., for the corresponding Boolean combinations of instances of $\varphi_{v}$ (for example, $v(x)\geqslant v(y)$ stands for $y\neq0\wedge\varphi_{v}(x/y)$).

\begin{lemma}\label{lem:powers}
Let $(L,w)$ be a henselian valued field of equicharacteristic $0$ whose residue field is algebraically closed, and let $t\in L$ be such that $w(t)$ is the smallest positive element of $\Gamma_{w}$. Then for every $x\in L^{\times}$ and every $n\geqslant2$,
\[
x\in(L^{\times})^{n}\iff w(x)\in n\Gamma_{w}.
\]
If moreover $\Gamma_{w}$ is a $\Z$-group with $1=w(t)$, then for each $x\in L^{\times}$ and $n\geqslant 2$ there is exactly one $i\in\{0,\dots,n-1\}$ such that $xt^{i}\in(L^{\times})^{n}$.
\end{lemma}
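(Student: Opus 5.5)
The first claim has two directions, and the forward one is immediate. If $x=y^{n}$ with $y\in L^{\times}$, then $w(x)=nw(y)\in n\Gamma_{w}$.

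For the converse, suppose $w(x)=n\gamma$ with $\gamma\in\Gamma_{w}$, and choose $z\in L^{\times}$ with $w(z)=\gamma$. Then $u=x/z^{n}$ is a unit of $\cO_{w}$. Since the residue field $k_{w}$ is algebraically closed, we pick $\bar b\in k_{w}$ with $\bar b^{n}=\res(u)$, and $\bar b\neq 0$ because $\res(u)\neq0$. Consider the polynomial $Y^{n}-u\in\cO_{w}[Y]$. Its reduction has $\bar b$ as a root, and this root is simple, since the derivative there is $n\bar b^{n-1}$. This is nonzero because $\operatorname{char}k_{w}=0$ and $\bar b\neq0$. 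By Hensel's lemma in the henselian field $(L,w)$ there is $b\in\cO_{w}$ with $b^{n}=u$. Hence $x=(bz)^{n}$. This direction does not use the hypothesis on $w(t)$ at all; equicharacteristic $0$ together with an algebraically closed residue field suffices.

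For the second claim, assume $\Gamma_{w}$ is a $\Z$-group whose distinguished element $1$ is $w(t)$. By the first part, $xt^{i}\in(L^{\times})^{n}$ if and only if $w(x)+i\in n\Gamma_{w}$. So the claim reduces to a statement purely about $\Z$-groups: for every $g\in\Gamma_{w}$ there is exactly one $i\in\{0,\dots,n-1\}$ with $g+i\in n\Gamma_{w}$.

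This is a standard consequence of the Presburger axioms. The $\Z$-group axiom states that for each $n$, exactly one of $g, g+1,\dots,g+(n-1)$ is divisible by $n$ (equivalently, $\Gamma_{w}/n\Gamma_{w}\cong\Z/n\Z$, generated by the class of $1$). It holds in any model because it is a first-order sentence true in $\Z$.

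One point needs care: in $\Gamma_{w}$ the predicate $D_{n}$ must coincide with genuine divisibility $n\Gamma_{w}$, since the first part of the lemma is phrased in terms of $n\Gamma_{w}$. This is automatic here. The formula $D_{n}(x)\leftrightarrow\exists y\,(ny=x)$ is part of $\Pres$, so in any $\Z$-group we have $D_{n}=n\Gamma_{w}$.

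The only real work is the Hensel step, which is routine given the hypotheses. The remaining bookkeeping is that uniqueness of $i$ relies on the value group axiom rather than on field-theoretic facts.
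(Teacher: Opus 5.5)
Your proposal is correct and follows essentially the same route as the paper. You Hensel-lift a simple root of $Y^{n}-\res(u)$ for the unit $u=x/z^{n}$, and then use the fact that in a $\Z$-group with $1=w(t)$ each element is congruent modulo $n\Gamma_{w}$ to $-i\cdot w(t)$ for exactly one $i\in\{0,\dots,n-1\}$.
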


\begin{proof}
If $x=y^{n}$ then $w(x)=nw(y)\in n\Gamma_{w}$. Conversely, if $w(x)=nw(y)$ for some $y\in L^{\times}$, then $u=x/y^{n}$ is a unit of $\cO_{w}$. Since $k_{w}$ is algebraically closed of characteristic $0$, the reduction of $Y^{n}-u$ has a simple root: any root $\alpha$ of $Y^{n}-\res(u)$ is nonzero and the derivative $n\alpha^{n-1}$ does not vanish. By henselianity $u\in(L^{\times})^{n}$, hence $x=(zy)^{n}$ for suitable $z$. The last claim follows: $xt^{i}\in(L^{\times})^{n}$ if and only if $w(x)+i\,w(t)\in n\Gamma_{w}$, and in a $\Z$-group every element is congruent modulo $n\Gamma_{w}$ to $-i\cdot 1$ for exactly one $i\in\{0,\dots,n-1\}$.
\end{proof}

The following gathers the model-theoretic properties of $T$ we shall use. The proof is a standard consequence of Ax-Kochen-Ershov and well established quantifier elimination results in valued fields. See for example \cite{vdD14}.

\begin{fact}\label{thm:qe}\
\begin{enumerate}[label=\textnormal{(\arabic*)}]
\item An $\cL$-structure $\mathbf{L}$, with underlying field $L$, is a model of $T$ if and only if the following hold: the formula $\varphi_{v}$ defines in $L$ the valuation ring of a henselian valuation $w$ of equicharacteristic $0$; the residue field $k_{w}$ is algebraically closed; the value group $\Gamma_{w}$ is a $\Z$-group whose smallest positive element is $w(t)$; and $P_{n}^{\mathbf{L}}$ is the set of $n$-th powers of $L$ for every $n$. 
\item $T$ eliminates quantifiers in $\cL$.
\item $T_{t}$ is model complete.
\item Every $\cL_{t}$-embedding between models of $T_{t}$ is an elementary $\cL$-embedding.
\end{enumerate}
\end{fact}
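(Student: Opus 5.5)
The plan is to prove the four items of \cref{thm:qe} in the order (1), (2), (4), (3). Item (1) comes from the Ax--Kochen--Ershov principle. Item (2) is a relative quantifier elimination for henselian valued fields of equicharacteristic $0$. Items (3) and (4) are then obtained from (2) together with \cref{prop:def-valuation} and \cref{lem:powers}.

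For (1), write $T'$ for the theory whose models are the structures described in the statement. Every model of $T$ satisfies the conditions, since $\ct$ does and each condition is first order: henselianity is an axiom scheme in the valuation defined by $\varphi_{v}$, and the $\Z$-group axioms transfer to $\Gamma_{w}$ using $w(t)$ as $1$. Conversely, let $\mathbf{L}\models T'$. Its residue field is algebraically closed of characteristic $0$, so it is elementarily equivalent to $\C$. Its value group is a $\Z$-group, so $(\Gamma_{w},w(t))\equiv(\Z,1)$, and hence $\Gamma_{w}\equiv\Z$ as ordered groups. Ax--Kochen--Ershov then gives $(L,w)\equiv(\ct,v)$ as valued fields. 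To upgrade this to $\cL$, note that the constant $t$ is pinned down by the condition that $w(t)$ is the least positive element, together with residue and value-group data. The cleanest route is to run AKE in the Denef--Pas or RV language with a constant for $t$, which uses an angular component ${\rm ac}$ with ${\rm ac}(t)=1$. Alternatively, I would derive (1) from (2) below: once quantifiers are eliminated, the structure generated by $t$, namely $\Q(t)$ with its $P_{n}$ data, is common to all models, and its quantifier-free type is the same in $\mathbf{L}$ and in $\ct$. The $P_{n}$ are fixed because \cref{lem:powers} determines $n$-th powers via $w(x)\in n\Gamma_{w}$, and the interpretation of the predicates is then forced.

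For (2), I would use the standard embedding test. Take models $\mathbf{L}_{1},\mathbf{L}_{2}$ of $T'$ with $\mathbf{L}_{2}$ being $|L_{1}|^{+}$-saturated, a common substructure $A$, and aim to extend $A\to\mathbf{L}_{2}$ to $\mathbf{L}_{1}$. The first step is to close $A$ under henselization; the henselization is unique, so the embedding extends. Next, the predicates $P_{n}$ let us adjoin $n$-th roots so that the value group of $A$ becomes pure in $\Gamma$. Indeed, $P_{n}$ together with \cref{lem:powers} makes $\Gamma_{A}$ an $\cL_{\Pres}$-substructure with $D_{n}$ correctly interpreted, and the refinement cited from \cite{Marker02} shows that the divisible-hull closure matches up. After that, immediate extensions, residue extensions (the residue fields are algebraically closed, so they can be handled by saturation) and value group extensions are treated by the usual equicharacteristic-$0$ analysis, as in \cite{vdD14}. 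I expect this step to be the main obstacle, specifically matching $n$-th roots consistently. I would handle it by using that, for each $n$, \cref{lem:powers} gives a unique $i$ with $xt^{i}$ an $n$-th power, so that choices of roots are determined up to roots of unity, and those lie in the algebraically closed residue field.

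For (4) and (3), let $f\colon L_{1}\to L_{2}$ be an $\cL_{t}$-embedding of models of $T_{t}$. By \cref{prop:def-valuation}, $\varphi_{v}$ is existential, so $f$ preserves $w(x)\geqslant0$. The reverse implication needs a universal description of the valuation ring. I would take $w(x)<0\iff \neg\varphi_{v}(x)$ and rewrite it, via the minimality of $w(t)$, as $w(x)<0\iff \varphi_{v}(x^{-1}t^{-1})$ for $x\neq0$; this is again existential, so $f$ is an embedding of valued fields. The $n$-th powers are existentially defined, and by \cref{lem:powers} the non-$n$-th powers are exactly $\bigvee_{0<i<n}P_{n}(xt^{i})$, also existential. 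So $f$ is an $\cL$-embedding. By (2) it is then $\cL$-elementary, which is (4). Item (3) follows immediately, since $\cL_{t}$-embeddings are $\cL_{t}$-elementary, being reducts of elementary maps.
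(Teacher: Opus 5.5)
Your proposal is correct and follows the route the paper indicates. The paper gives no argument beyond invoking Ax--Kochen--Ershov and the standard quantifier elimination for henselian fields of equicharacteristic $0$ (e.g.\ \cite{vdD14}); your sketch fleshes this out, and your deduction of (3) and (4) from (2) is right: it uses the existential definitions $\varphi_v(x)$, $\varphi_v(x^{-1}t^{-1})$ for $x\neq0$, $\exists y\,(y^n=x)$, and $x\neq0\wedge\bigvee_{0<i<n}P_n(xt^i)$.

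Two points in (2) are worth making explicit. First, $\cO_w$ is quantifier-free definable in $\cL$ by $P_2(1+tx^2)$, so $\cL$-embeddings of substructures respect the valuation. Second, the choice of $n$-th root is irrelevant because $A$ is henselian (the valuation extends uniquely to $A(y)$) and the new value group stays inside the minimal $\Z$-group over $\Gamma_A$; roots of unity play no role.
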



\begin{remark}\label{rem:Tforall-language}
From now on we write $\tf$ for the universal part of $T$ in the language $\cL$. Since $\cL$ contains ${}^{-1}$, every substructure of a model of $T$ is a field; the models of $\tf$ are thus, up to isomorphism, exactly the $\cL$-substructures of models of $T$. Note also that in a model $\mathbf{L}\models\tf$ the predicate $P_{n}$ need \emph{not} coincide with the set of $n$-th powers of $L$: the axiom $P_{n}(x)\leftrightarrow\exists y\,(y^{n}=x)$ is not universal, and only the implication $\forall x\,P_{n}(x^{n})$ survives in $\tf$. 
\end{remark}

\section{An abstract Nullstellensatz for model complete theories}\label{sec:abstract}

In this section we isolate the model-theoretic mechanism common to all Nullstellensätze mentioned in the introduction; compare with Weispfenning's framework \cite{Weispfenning77}. The results in this section go back to ideas of Abraham Robinson. We include the proofs since they might be of interest. Throughout the section we fix:
\begin{itemize}
\item a language $\cL_{0}$ obtained from $\{+,-,\cdot,{}^{-1},0,1\}$ by adding constant and relation symbols;
\item a model complete $\cL_{0}$-theory $T_{0}$ containing the theory of fields;
\item a model $\mathbf{K}\models T_{0}$ with underlying field $K$.
\end{itemize}
Since $\cL_{0}$ has no function symbols beyond the field operations, the substructures of a model of $T_{0}$ are exactly the subfields containing the constants, with the induced relations; the models of $(T_{0})_{\forall}$ are thus exactly such subfields of models of $T_{0}$.

\begin{notation}
For $a\in K^{n}$ we write $I_{a}=J(\{a\})=\{f\in\KX: f(a)=0\}$, a maximal ideal with $\KX/I_{a}\cong K$. For a prime ideal $I\subseteq\KX$ we write $k(I)=\Frac(\KX/I)$ and $\bar{f}$ for the image of $f\in\KX$ in $k(I)$.
\end{notation}

\begin{lemma}\label{lem:fip}
Let $I\subseteq\KX$ be a prime closed ideal. Then the family of subsets of $K^{n}$
\[
\cF=\big\{Z(g)^{c} : g\in\KX,\ Z(g)^{c}\cap Z(I)\neq\emptyset\big\}\cup\{Z(I)\}
\]
has the finite intersection property.
\end{lemma}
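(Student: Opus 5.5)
It suffices to show that every finite subfamily of $\cF$ has nonempty intersection. Since $Z(I)$ belongs to $\cF$, we may assume the subfamily contains $Z(I)$. So we take finitely many $g_{1},\dots,g_{k}\in\KX$ with $Z(g_{i})^{c}\cap Z(I)\neq\emptyset$ for each $i$, and we aim to show
\[
Z(I)\cap Z(g_{1})^{c}\cap\dots\cap Z(g_{k})^{c}\neq\emptyset .
\]
The idea is to pass to the single polynomial $g=g_{1}\cdots g_{k}$, using $Z(g)^{c}=\bigcap_{i}Z(g_{i})^{c}$. The claim then becomes that some $a\in Z(I)$ satisfies $g(a)\neq0$, i.e.\ that $g\notin J(Z(I))$.

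The first step is to translate each hypothesis into ideal language. Since $Z(g_{i})^{c}\cap Z(I)\neq\emptyset$, there is a point of $Z(I)$ at which $g_{i}$ does not vanish, so $g_{i}\notin J(Z(I))$. Because $I$ is closed, $J(Z(I))=I$, and hence $g_{i}\notin I$ for every $i$.

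The second step uses primality. Since $I$ is prime and no $g_{i}$ lies in $I$, the product $g=g_{1}\cdots g_{k}$ is not in $I$ either. Applying closedness once more gives $g\notin J(Z(I))$. So there is some $a\in Z(I)$ with $g(a)\neq0$, which means $g_{i}(a)\neq0$ for every $i$. This $a$ lies in the required intersection.

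The argument is short, and the only point needing care is to apply closedness in both directions: first to move from geometric nonvanishing to non-membership in $I$, then to move back. The degenerate case in which the subfamily is just $\{Z(I)\}$ is covered by taking $k=0$ and $g=1$. There $1\notin I$ because $I$ is prime, hence proper, so $1\notin J(Z(I))$ and therefore $Z(I)\neq\emptyset$.
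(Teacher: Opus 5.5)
Your proof is correct and is essentially the paper's argument, run directly rather than by contradiction: you pass to the product $g=g_1\cdots g_k$, use primality together with $J(Z(I))=I$, and get $Z(I)\neq\emptyset$ from properness of $I$. As a minor remark, your first appeal to closedness only needs the trivial inclusion $I\subseteq J(Z(I))$; the genuine use of closedness is the second one.
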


\begin{proof}
First, $Z(I)\neq\emptyset$: otherwise $J(Z(I))=J(\emptyset)=\KX$, contradicting $J(Z(I))=I\subsetneq\KX$.

Let $Z(g_{1})^{c},\dots,Z(g_{m})^{c}\in\cF$ and suppose $Z(g_{1})^{c}\cap\dots\cap Z(g_{m})^{c}\cap Z(I)=\emptyset$. Setting $g=g_{1}\cdots g_{m}$, every $a\in Z(I)$ then lies in some $Z(g_{i})$, so $g(a)=0$; hence $g\in J(Z(I))=I$. As $I$ is prime, $g_{i}\in I$ for some $i$, which gives $Z(I)\subseteq Z(g_{i})$, i.e.\ $Z(g_{i})^{c}\cap Z(I)=\emptyset$, contradicting $Z(g_{i})^{c}\in\cF$.
\end{proof}

\begin{theorem}\label{thm:closed-implies-model}
Let $I\subseteq\KX$ be a prime closed ideal. Then $k(I)$ can be equipped with an $\cL_{0}$-structure $\mathbf{k(I)}$ such that $\mathbf{k(I)}\models(T_{0})_{\forall}$ and the canonical embedding $K\to k(I)$ is an $\cL_{0}$-embedding.
\end{theorem}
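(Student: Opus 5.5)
The plan is to realize $k(I)$ as a substructure of an elementary extension of $\mathbf{K}$, using \cref{lem:fip} and compactness. Let $\mathbf{K}^{*}$ be an $|K|^{+}$-saturated elementary extension of $\mathbf{K}$ (an ultrapower would also do). For each $g\in\KX$ write $\phi_{g}(x)$ for the $\cL_{0}(K)$-formula $g(x)\neq 0$, where $x=(x_{1},\dots,x_{n})$. Consider the partial type
\[
p(x)=\{f(x)=0 : f\in I\}\cup\{g(x)\neq 0 : g\in\KX\setminus I\}.
\]

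\textbf{Step 1: $p$ is finitely satisfiable in $\mathbf{K}$.} Take $f_{1},\dots,f_{r}\in I$ and $g_{1},\dots,g_{m}\notin I$. We need a point of $Z(I)$ lying outside every $Z(g_{i})$; since $Z(I)\subseteq Z(f_j)$, such a point satisfies the finite fragment. Because $I$ is closed and $g_{i}\notin I=J(Z(I))$, there is $a\in Z(I)$ with $g_{i}(a)\neq0$. Hence $Z(g_{i})^{c}\cap Z(I)\neq\emptyset$, so $Z(g_{i})^{c}\in\cF$. By \cref{lem:fip}, $Z(I)\cap\bigcap_{i}Z(g_{i})^{c}\neq\emptyset$, and any point in this intersection realizes the finite fragment.

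\textbf{Step 2: realize $p$ and build the embedding.} By saturation (or compactness together with the elementary diagram of $\mathbf{K}$), some $b\in (K^{*})^{n}$ realizes $p$. The evaluation map $\KX\to K^{*}$, $f\mapsto f(b)$, is a $K$-algebra homomorphism whose kernel is exactly $I$, since $b$ satisfies both halves of $p$. It therefore induces an injective ring map $\KX/I\to K^{*}$. This extends to a field embedding $\iota\colon k(I)\to K^{*}$, whose image is the subfield $K(b)$.

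\textbf{Step 3: transport the structure.} Since $\cL_{0}$ has only the field operations plus constants and relations, the subfield $K(b)$, which contains $K$ and hence all constants, is an $\cL_{0}$-substructure of $\mathbf{K}^{*}$. Pull back the relations along $\iota$ to define $\mathbf{k(I)}$. Then $\mathbf{k(I)}$ is isomorphic to a substructure of $\mathbf{K}^{*}\models T_{0}$, so $\mathbf{k(I)}\models(T_{0})_{\forall}$. Moreover, $K\to k(I)$ is an $\cL_{0}$-embedding, because $\mathbf{K}\preceq\mathbf{K}^{*}$ and $\iota$ restricts to the identity on $K$.

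\textbf{Expected obstacle.} The only substantive step is Step 1: checking that the negative conditions are jointly compatible with $Z(I)$. This is exactly where both closedness and primeness enter, through \cref{lem:fip}. The remaining steps are bookkeeping, and model completeness is not even needed for this statement.
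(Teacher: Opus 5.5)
Your proof is correct and follows essentially the paper's argument. The paper uses \cref{lem:fip} to choose an ultrafilter $\cU\supseteq\cF$ on $K^{n}$ and realizes your type $p$ concretely by the tuple $\big((a_{i})_{a\in K^{n}}/\cU\big)_{i}$ in the ultrapower $\mathbf{K}^{K^{n}}/\cU$, whereas you realize the same finitely satisfiable type by compactness/saturation; the computation that the kernel is $I$ and the transfer of the $\cL_{0}$-structure to $k(I)$ are identical.
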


\begin{proof}
By \cref{lem:fip} there is an ultrafilter $\cU$ on $K^{n}$ containing $\cF$. Let $\mathbf{L}=\mathbf{K}^{K^{n}}/\,\cU$ be the corresponding ultrapower, with underlying field $L$, and let
\[
\rho\colon\KX\longrightarrow L,\qquad \rho(f)=\big(f(a)\big)_{a\in K^{n}}/\,\cU .
\]
This is a ring homomorphism whose restriction to $K$ is the diagonal embedding $K\preceq L$, which is elementary by \L o\'s's theorem; in particular $\mathbf{L}\models T_{0}$ and the diagonal is an $\cL_{0}$-embedding.

We claim $\ker\rho=I$. If $f\in I$ then $Z(I)\subseteq Z(f)$, and $Z(I)\in\cF\subseteq\cU$, so $Z(f)\in\cU$ and $\rho(f)=0$. Conversely, if $f\notin I$ then, since $J(Z(I))=I$, there is $a\in Z(I)$ with $f(a)\neq 0$, so $Z(f)^{c}\cap Z(I)\neq\emptyset$ and hence $Z(f)^{c}\in\cF\subseteq\cU$; thus $Z(f)\notin\cU$ and $\rho(f)\neq 0$.

Consequently $\rho$ induces a field embedding $k(I)=\Frac(\KX/I)\hookrightarrow L$ extending the diagonal on $K$. Identify $k(I)$ with its image. Since the constants of $\cL_{0}$ are interpreted in $K\subseteq k(I)$, the subfield $k(I)$ is an $\cL_{0}$-substructure of $\mathbf{L}$ for the induced relations. Being a substructure of a model of $T_{0}$, it is a model of $(T_{0})_{\forall}$, and $K\to k(I)$ is an $\cL_{0}$-embedding because both structures are induced from $\mathbf{L}$.
\end{proof}

The following theorem can be seen as an abstract form of Nullstellensatz for model complete theories of fields (under the assumptions of $\cL_0$). 

\begin{theorem}\label{thm:model-implies-closed}
Let $I\subseteq\KX$ be a prime ideal and suppose $k(I)$ carries an $\cL_{0}$-structure $\mathbf{k(I)}\models(T_{0})_{\forall}$ such that the canonical embedding $K\to k(I)$ is an $\cL_{0}$-embedding. Then $I$ is closed, i.e.\ $J(Z(I))=I$.
\end{theorem}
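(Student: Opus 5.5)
We must show $J(Z(I))\subseteq I$, since $I\subseteq J(Z(I))$ is automatic. The plan is to fix $f\in\KX\setminus I$ and produce a point $a\in Z(I)$ with $f(a)\neq 0$; then $f\notin J(Z(I))$. The point $a$ will come from \cref{lem:mc-maps} applied to a suitable proper ideal, using the model $\mathbf{k(I)}$ to certify that the ideal is proper.

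First, embed $\mathbf{k(I)}$ into a model of $T_{0}$. Since $\mathbf{k(I)}\models(T_{0})_{\forall}$, it is an $\cL_{0}$-substructure of some $\mathbf{L}\models T_{0}$. This is the standard fact that models of the universal part embed into models of the theory, via compactness applied to $T_{0}\cup\Delta(\mathbf{k(I)})$. Composing with the canonical embedding gives an $\cL_{0}$-embedding $K\to k(I)\to L$, so $\mathbf{K}$ is (up to identification) an $\cL_{0}$-substructure of $\mathbf{L}$, and both are models of $T_{0}$.

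Next, add a variable to invert $f$ (the Rabinowitsch trick). Let $Y$ be a new variable and consider $I'=I\,K[X,Y]+\langle Yf-1\rangle\subseteq K[X,Y]$. Define a $K$-algebra homomorphism $\varphi\colon K[X,Y]\to L$ by $X_{i}\mapsto\bar X_{i}$ and $Y\mapsto \bar f^{-1}$. This is well defined because $\bar f\neq 0$ in $k(I)$, as $f\notin I$ and $I$ is prime. The map $\varphi$ kills $I'$, so it factors through $K[X,Y]/I'$. In particular $I'$ is proper, since $\varphi(1)=1\neq0$. By \cref{lem:mc-maps}, applied with $n+1$ variables and using model completeness of $T_{0}$, there is a $K$-algebra homomorphism $\psi\colon K[X,Y]/I'\to K$.

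Finally, set $a=(\psi(X_{1}),\dots,\psi(X_{n}))$ and $b=\psi(Y)$. Because $\psi$ is a $K$-algebra map, $g(a)=\psi(g)=0$ for all $g\in I$, so $a\in Z(I)$. Also $bf(a)=1$, so $f(a)\neq0$, which gives $f\notin J(Z(I))$. The only delicate point is the first step: we must ensure that $\mathbf{K}$ sits inside $\mathbf{L}$ as an $\cL_{0}$-substructure, not merely as a subfield, so that \cref{lem:mc-maps} applies. This is exactly where the hypothesis that $K\to k(I)$ is an $\cL_{0}$-embedding is used.
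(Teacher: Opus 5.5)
Your proof is correct. It differs from the paper's mainly in how it is packaged. The paper also embeds $\mathbf{k(I)}$ into some $\mathbf{M}\models T_0$ and notes that $\mathbf{K}\preceq\mathbf{M}$ by model completeness. It then works with $f\in J(Z(I))$ directly: the universal sentence $\forall x\,\bigl(\bigwedge_i f_i(x)=0\to f(x)=0\bigr)$ holds in $\mathbf{K}$, so it holds in $\mathbf{M}$. Evaluating it at the generic point $(\bar X_1,\dots,\bar X_n)$ gives $f\in I$.

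You argue the contrapositive and transfer an existential statement downward instead. The Rabinowitsch variable $Y$ turns the inequation $f(x)\neq0$ into the equation $Yf-1=0$. The existence of a zero of $I'$ in $L$ can then be fed into \cref{lem:mc-maps}, which returns a $K$-rational point of $Z(I)$ outside $Z(f)$. Logically, both proofs move the same sentence, up to negation, along the same elementary embedding $\mathbf{K}\preceq\mathbf{L}$.

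The paper's version is shorter and self-contained: it uses only the definition of model completeness. Yours relies on \cref{lem:mc-maps}, which the paper states without proof. In exchange, you exhibit the witness point $a$ explicitly, in the style of deducing the strong Nullstellensatz from the weak one.
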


\begin{proof}
Since $\mathbf{k(I)}\models(T_{0})_{\forall}$, there are $\mathbf{M}\models T_{0}$, with underlying field $M$, and an $\cL_{0}$-embedding $\mathbf{k(I)}\to\mathbf{M}$. The composite $\mathbf{K}\to\mathbf{k(I)}\to\mathbf{M}$ is an $\cL_{0}$-embedding between models of $T_{0}$, hence elementary by model completeness. Let $\varphi\colon\KX\to M$ be the $K$-algebra map sending $X_{i}$ to the image of $\overline{X_{i}}$; by construction $\ker\varphi=I$.

Fix generators $f_{1},\dots,f_{l}$ of $I$ and let $f\in J(Z(I))$. The $\cL_{0}$-sentence with parameters in $K$
\[
\forall x_{1}\dots\forall x_{n}\Big(\bigwedge_{i=1}^{l}f_{i}(x_{1},\dots,x_{n})=0\ \to\ f(x_{1},\dots,x_{n})=0\Big)
\]
holds in $\mathbf{K}$, hence in $\mathbf{M}$ by elementarity. Applying it to the tuple $\big(\varphi(X_{1}),\dots,\varphi(X_{n})\big)$, which annihilates every $f_{i}$, we get $\varphi(f)=0$, i.e.\ $f\in I$. The reverse inclusion $I\subseteq J(Z(I))$ is trivial.
\end{proof}

\begin{theorem}\label{thm:bijections-abstract}
The operators $Z$ and $J$ induce mutually inverse bijections:
\begin{enumerate}[label=\textnormal{(\arabic*)}]
\item between closed ideals of $\KX$ and algebraic subsets of $K^{n}$;
\item between prime closed ideals of $\KX$ and irreducible algebraic subsets of $K^{n}$;
\item between maximal proper closed ideals of $\KX$ and points of $K^{n}$.
\end{enumerate}
\end{theorem}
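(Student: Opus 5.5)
The plan is to check each of the three correspondences separately, relying only on the formal properties of $Z$ and $J$ together with \cref{lem:JV-props} and \cref{lem:fip}. Model completeness does not enter, except that we should confirm the statement is consistent with \cref{thm:closed-implies-model}.

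For (1), the usual Galois-connection identities do the work. For any set $B\subseteq K^{n}$ we have $Z(J(B))\supseteq B$, and for any ideal $I$ we have $J(Z(I))\supseteq I$. Because $Z$ and $J$ both reverse inclusions, it follows that $Z(J(Z(I)))=Z(I)$. So whenever $B=Z(I)$ is an algebraic set, $Z(J(B))=B$. In the other direction, $J(B)$ is closed for every algebraic set $B$, since $J(Z(J(B)))=J(B)$ by the same identity. By definition, a closed ideal $I$ satisfies $J(Z(I))=I$. Hence $J$ and $Z$ are mutually inverse between closed ideals and algebraic sets, and both maps reverse inclusions.

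For (2), we use that the bijection reverses inclusion. Suppose first that $I$ is prime and closed, and that $Z(I)=B_{1}\cup B_{2}$ with each $B_{i}$ algebraic. Then $J(B_{1})J(B_{2})\subseteq J(B_{1})\cap J(B_{2})=J(Z(I))=I$. Primality gives $J(B_{i})\subseteq I$ for some $i$, and then $B_{i}=Z(J(B_{i}))\supseteq Z(I)$. We also need $Z(I)\neq\emptyset$, which holds because $I$ is proper and $J(\emptyset)=\KX$. Conversely, suppose $B$ is irreducible and nonempty, and take $fg\in J(B)$. Then $B=(B\cap Z(f))\cup(B\cap Z(g))$, and both pieces are algebraic. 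By irreducibility one of them equals $B$, so $f$ or $g$ lies in $J(B)$; and $J(B)\neq\KX$ because $B\neq\emptyset$. (\cref{lem:JV-props}(2) is consistent with this, although we do not need it here.)

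For (3), we again use inclusion reversal. A maximal proper closed ideal corresponds to a minimal nonempty algebraic set. Each singleton $\{a\}=Z(I_{a})$ is algebraic, and every nonempty algebraic set contains one. So the minimal nonempty algebraic sets are exactly the singletons, and the maximal proper closed ideals are exactly the $I_{a}=J(\{a\})$. Here $I_{a}$ is closed because it is maximal and $Z(I_{a})=\{a\}$.

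The one delicate point I expect is the convention on irreducibility. We have to make sure that the empty set is excluded from the irreducible algebraic sets and that $\KX$ is excluded from the primes. Once those conventions are fixed, the rest is formal.
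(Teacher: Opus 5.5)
Your proof is correct. For (1) and (2) it is the standard dictionary that the paper simply cites, but for (3) you take a genuinely different and more elementary route.

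The paper first uses \cref{lem:JV-props} to show that a maximal proper closed ideal $I$ is prime. It then invokes \cref{thm:closed-implies-model} and \cref{lem:mc-maps} to obtain a $K$-algebra map $\KX/I\to K$, hence a point $a\in Z(I)$ with $I\subseteq I_a$, and concludes $I=I_a$ by maximality.

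You observe instead that such a point exists for free. For a proper closed ideal, $J(Z(I))=I\neq\KX=J(\emptyset)$, so $Z(I)\neq\emptyset$; this is the same remark that opens the proof of \cref{lem:fip}. Through the order-reversing bijection of (1), the maximal proper closed ideals then correspond to the minimal nonempty algebraic sets, which are exactly the singletons $\{a\}=Z(I_a)$.

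Your argument therefore shows that all three parts hold over an arbitrary field, with no model-theoretic input. The paper's claim that (3) ``uses model completeness'' is not accurate for the statement as phrased. Its detour is harmless but unnecessary, since picking any $a\in Z(I)$ already gives $I\subseteq I_a$. Model completeness is genuinely needed only when closedness is not assumed: for instance, to show that a maximal ideal $\mathfrak m$ whose residue field carries a structure modelling $(T_0)_\forall$ over $\mathbf{K}$ has a $K$-rational zero. In the paper that input is supplied separately by \cref{thm:model-implies-closed}.

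Two minor points. You never actually use \cref{lem:fip}, despite citing it at the outset. And $I_a$ is closed simply because $J(Z(I_a))=J(\{a\})=I_a$; its maximality plays no role in that step.
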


\begin{proof}
Parts (1) and (2) are the standard dictionary between closed ideals and algebraic sets and hold, with the classical proofs, over an arbitrary field; compare \cite[Proposition 2.8.2]{bcr}. Only (3) uses model completeness. Each $I_{a}$ is a maximal ideal of $\KX$ and it is closed since $I_{a}\subseteq J(Z(I_{a}))\subseteq J(\{a\})=I_{a}$. Conversely, let $I$ be a maximal element of the set of proper closed ideals. Since $I$ is radical (\cref{lem:JV-props}(1)) and its minimal primes are closed (\cref{lem:JV-props}(2)), maximality gives that $I$ is prime. By \cref{thm:closed-implies-model}, $k(I)$ carries a structure $\mathbf{k(I)}\models(T_{0})_{\forall}$, which embeds into some $\mathbf{M}\models T_{0}$ over $\mathbf{K}$; the field $M$ then contains $K$ as an $\cL_{0}$-substructure. \Cref{lem:mc-maps} applied to the composite $\KX/I\to M$ yields a $K$-algebra homomorphism $\psi\colon\KX/I\to K$. Let $a_{i}=\psi(\overline{X_{i}})$ and $a=(a_{1},\dots,a_{n})$. For every $f\in I$ we get $f(a)=\psi(\bar f)=0$, so $a\in Z(I)$ and thus $I\subseteq J(\{a\})=I_{a}$. By maximality $I=I_{a}$.
\end{proof}

\section{Formally \texorpdfstring{$T$}{T}-adic fields}\label{sec:formally}

For the rest of the article we fix a model $\mathbf{K}\models T$ with underlying field $K$, valuation $v$, valuation ring $\cO_{K}$, residue field $k_{K}$ and value group $\Gamma_{K}$; recall from \cref{thm:qe}(1) that $\Gamma_{K}$ is a $\Z$-group with smallest positive element $v(t)$, and from \cref{lem:powers} that $P_{n}^{\mathbf{K}}=\{x\in K^{\times}: v(x)\in n\Gamma_{K}\}\cup\{0\}$. All field extensions $L$ of $K$ are assumed to interpret the constant $t$ as the element $t\in K$.

\begin{definition}\label{def:formally-T-adic}
Let $L$ be a field extension of $K$.
\begin{enumerate}[label=\textnormal{(\arabic*)}]
\item $L$ is \emph{formally $T$-adic} (over $K$) if $L$ can be equipped with interpretations of the predicates $P_{n}$ making it a model of $\tf$ in such a way that the inclusion $\mathbf{K}\subseteq\mathbf{L}$ is an $\cL$-embedding.
\item A valuation $w$ on $L$ extending $v$ is a \emph{$\tf$-valuation} if the pair $(\Gamma_{w},w(t))$ can be expanded by predicates $(D_{n})_{n\geqslant 2}$ to an $\cL_{\Pres}$-structure which is a model of $\Presf$, the constant $1$ being interpreted as $w(t)$.
\end{enumerate}
\end{definition}

Note that if $w$ is a $\tf$-valuation then $w(t)$ is the smallest positive element of $\Gamma_{w}$, since the universal sentences $0<1$ and $\forall x\,(x\leqslant 0\vee x\geqslant 1)$ belong to $\Presf$.

The main result of this section is that clauses (1) and (2) of \Cref{def:formally-T-adic} are equivalent conditions on $L$. The implication from (1) to (2) is a straightforward restriction argument. For the converse we must extend $(L,w)$ to a model of $T$, which requires enlarging the value group to a $\Z$-group and the residue field to an algebraically closed field without disturbing what is below. We first prove the required extension theorem.

\begin{lemma}\label{lem:division-step}
Let $(L,w)$ be a valued field, $p$ a prime number and $\delta\in\Gamma_{w}$ with $\delta/p\notin\Gamma_{w}$ (inside the divisible hull of $\Gamma_{w}$). Then there is an algebraic extension $(L',w')$ of $(L,w)$ with
\[
[L':L]=p,\qquad \Gamma_{w'}=\Gamma_{w}+\Z\tfrac{\delta}{p},\qquad k_{w'}=k_{w}.
\]
\end{lemma}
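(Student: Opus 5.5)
Pick $d\in L^{\times}$ with $w(d)=\delta$, and let $L'=L(\alpha)$ where $\alpha$ is a root of $Y^{p}-d$. The plan is to show that $Y^p-d$ is irreducible over $L$, so $[L':L]=p$, and that \emph{every} extension $w'$ of $w$ to $L'$ has the required value group and residue field. Any such $w'$ will do, and one exists by Chevalley's extension theorem.

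First, for any extension $w'$ of $w$ to $L(\alpha)$ we have $pw'(\alpha)=\delta$, hence $w'(\alpha)=\delta/p$. Since $\delta/p\notin\Gamma_w$ while $p(\delta/p)\in\Gamma_w$, the class of $\delta/p$ has order exactly $p$ in $\Gamma_{w'}/\Gamma_w$ ($p$ is prime). So the ramification index satisfies $e(w'|w)\geqslant p$. By the fundamental inequality $e\cdot f\leqslant[L(\alpha):L]\leqslant p$, we get
\[
p\leqslant e\leqslant [L(\alpha):L]\leqslant p .
\]
Hence all of these are equalities. Therefore $[L':L]=p$, so $Y^p-d$ is irreducible, $e=p$, and $f=1$, that is, $k_{w'}=k_w$.

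It remains to identify $\Gamma_{w'}$. We have $\Gamma_w+\Z\tfrac{\delta}{p}\subseteq\Gamma_{w'}$, and this subgroup already has index $p$ over $\Gamma_w$. Since $[\Gamma_{w'}:\Gamma_w]=e=p$, the two groups coincide. If one prefers a direct argument, an element $\sum_{i<p}c_i\alpha^i$ has summands whose values $w(c_i)+i\delta/p$ lie in distinct cosets of $\Gamma_w$, so they are pairwise distinct. The value of the sum is therefore their minimum, which lies in $\Gamma_w+\Z\tfrac{\delta}{p}$.

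The only potential obstacle is remembering that the fundamental inequality $ef\leqslant n$ holds for arbitrary (not necessarily henselian) valued fields and finite extensions. It does (Endler/Engler–Prestel), so no henselianity or characteristic assumption is needed. In particular, irreducibility of $Y^p-d$ comes for free from the value-group computation.
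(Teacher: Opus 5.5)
Your proof is correct, but it is organized differently from the paper's.

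\textbf{The paper's route.} It first proves that $Y^{p}-a$ is irreducible by citing Lang's criterion for binomials (for $p$ prime, $Y^{p}-a$ is reducible iff $a\in L^{p}$). It then writes down the valuation $w'\big(\sum_{i<p}a_{i}b^{i}\big)=\min_{i}\{w(a_{i})+i\delta/p\}$ explicitly, citing Engler--Prestel for the fact that this formula is a valuation, and reads off $\Gamma_{w'}$ from it. The inequality $ef\leqslant[L':L]$ is used only at the end, to get $k_{w'}=k_{w}$.

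\textbf{Your route.} You take an arbitrary extension $w'$ supplied by Chevalley's theorem. The single chain $p\leqslant e\leqslant ef\leqslant[L(\alpha):L]\leqslant p$ then yields irreducibility, $e=p$ and $f=1$ at once. The equality $\Gamma_{w'}=\Gamma_{w}+\Z\tfrac{\delta}{p}$ follows by an index count.

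\textbf{What your route buys.} You need neither the irreducibility criterion nor a check that the min-formula is multiplicative. That check is the nontrivial part of the paper's explicit construction, and it rests on the same distinct-cosets observation you make. You also get a slightly stronger conclusion: \emph{every} extension of $w$ to $L(\alpha)$ has the stated value group and residue field. Your ``direct argument'' even shows that any such $w'$ must coincide with the min-formula, so the extension is unique.

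\textbf{What the paper's route buys.} It tells you from the outset exactly what $w'$ is.

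\textbf{One point to make explicit.} The step $w'(\alpha)=\delta/p$ uses two facts. First, $\Gamma_{w'}$ embeds canonically in the divisible hull of $\Gamma_{w}$, since $L(\alpha)/L$ is algebraic. Second, that hull is torsion-free, so $\delta/p$ is the unique $\gamma$ there with $p\gamma=\delta$.
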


\begin{proof}
Pick $a\in L$ with $w(a)=\delta$. The polynomial $Y^{p}-a$ is irreducible over $L$: since $p$ is prime, $Y^{p}-a$ is reducible if and only if $a\in L^{p}$ \cite[Chapter VI, Theorem 9.1]{lang-algebra}, and $a=c^{p}$ would give $\delta/p=w(c)\in\Gamma_{w}$. Let $b$ be a root of $Y^{p}-a$ and $L'=L(b)$, so $[L':L]=p$. The valuation $w$ extends to a valuation $w'$ on $L'$ with $\Gamma_{w'}=\Gamma_{w}+\Z\frac{\delta}{p}$: explicitly, $w'\big(\sum_{i=0}^{p-1}a_{i}b^{i}\big)=\min_{i}\big\{w(a_{i})+i\,\frac{\delta}{p}\big\}$ defines such an extension \cite[Remark 3.2.17]{EP05}. The quotient $\Gamma_{w'}/\Gamma_{w}$ is generated by $\frac{\delta}{p}+\Gamma_{w}$, an element of order exactly $p$ (its order divides the prime $p$ and is not $1$), so $[\Gamma_{w'}:\Gamma_{w}]=p$. By the fundamental inequality \cite[Theorem 3.3.4]{EP05},
\[
p\cdot[k_{w'}:k_{w}]=[\Gamma_{w'}:\Gamma_{w}]\cdot[k_{w'}:k_{w}]\leqslant[L':L]=p,
\]
hence $k_{w'}=k_{w}$.
\end{proof}

\begin{theorem}\label{thm:extension}
Let $(L,w)$ be a valued field of equicharacteristic $0$, let $\varepsilon\in\Gamma_{w}$, and suppose $(\Gamma_{w},\varepsilon)$ admits an expansion $(\Gamma_{w},\varepsilon,(D_{n})_{n})\models\Presf$. Let $k'$ be an algebraic extension of $k_{w}$. Then there is an algebraic extension $(F,u)$ of $(L,w)$ such that:
\begin{enumerate}[label=\textnormal{(\arabic*)}]
\item $(F,u)$ is henselian with residue field $k'$;
\item $\Gamma_{u}=H$, the minimal $\Z$-group containing $(\Gamma_{w},\varepsilon,(D_{n})_{n})$ (see \Cref{subsec:model-theory}); in particular $\Gamma_{u}$ is a $\Z$-group with smallest positive element $\varepsilon$ and $n\Gamma_{u}\cap\Gamma_{w}=D_{n}$ for all $n$.
\end{enumerate}
\end{theorem}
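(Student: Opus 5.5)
We build $(F,u)$ in three stages: first enlarge the value group to $H$ without touching the residue field, then enlarge the residue field to $k'$ without touching the value group, and finally pass to a henselization.

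\textbf{Stage 1: the value group.} Since $H$ is contained in the divisible hull of $\Gamma_{w}$, the quotient $H/\Gamma_{w}$ is torsion. It is also countably generated over $\Gamma_{w}$ modulo finitely many steps at a time: each element of $H$ has the form $g/n$ with $g\in D_{n}$. We consider the family of all algebraic valued field extensions $(L_{1},w_{1})$ of $(L,w)$ with $k_{w_{1}}=k_{w}$ and $\Gamma_{w}\leqslant\Gamma_{w_{1}}\leqslant H$. This family is inductive under chains, taking unions inside a fixed algebraic closure $L^{\mathrm{alg}}$ with compatible valuations, so Zorn's lemma gives a maximal element $(L_{1},w_{1})$. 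We claim $\Gamma_{w_{1}}=H$. If not, pick $h\in H\setminus\Gamma_{w_{1}}$ and let $m$ be the order of $h$ modulo $\Gamma_{w_{1}}$. Choosing a prime $p\mid m$ and replacing $h$ by $(m/p)h$, we obtain $\delta:=ph\in\Gamma_{w_{1}}$ with $\delta/p=h\notin\Gamma_{w_{1}}$. \Cref{lem:division-step} then gives a proper extension with value group $\Gamma_{w_{1}}+\Z h\leqslant H$ and the same residue field, contradicting maximality. The only property of $H$ used here is that it is a group containing $\Gamma_{w}$ inside the divisible hull. The additional facts, namely that $H$ is a $\Z$-group with smallest positive element $\varepsilon$ and that $nH\cap\Gamma_{w}=D_{n}$, are exactly the cited refinement from \Cref{subsec:model-theory}.

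\textbf{Stage 2: the residue field.} Next we enlarge the residue field to $k'$ while keeping $\Gamma_{w_{1}}=H$. Since $k'/k_{w}$ is algebraic, we again use Zorn's lemma on algebraic extensions with value group $H$ and residue field an intermediate field of $k'/k_{w}$, where residue fields are embedded over $k_{w}$ into $k'$. Suppose a maximal element has residue field $k_{2}\subsetneq k'$. Pick $\alpha\in k'\setminus k_{2}$ with minimal polynomial $\bar g$ over $k_{2}$. Lift $\bar g$ to a monic $g$ over the valuation ring and adjoin a root $\beta$ of an irreducible factor of $g$ reducing to a power of $\bar g$. Equicharacteristic $0$ makes $\bar g$ separable, so $g$ has an irreducible factor whose reduction is exactly $\bar g$. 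For the extension $w_{2}'$ to $L_{2}(\beta)$ given by the Gauss-type valuation, the residue field contains $k_{2}(\alpha)$ and $[L_{2}(\beta):L_{2}]=\deg\bar g$. The fundamental inequality then forces the value group to be unchanged and the residue field to be exactly $k_{2}(\alpha)$, contradicting maximality. This is the standard unramified-extension construction, cf.\ \cite{EP05}.

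\textbf{Stage 3: henselization.} Finally let $F$ be the henselization of the resulting field. Henselization is an immediate algebraic extension, so $(F,u)$ is henselian with $\Gamma_{u}=H$ and $k_{u}=k'$, which gives (1) and (2).

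The main obstacle is the bookkeeping in the Zorn arguments: keeping the residue fields identified with subfields of $k'$, and ensuring the chosen extension of the valuation has exactly the prescribed residue field. The fundamental inequality handles the second point cleanly in each step.
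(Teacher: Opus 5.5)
Your proof is correct, but it is organized differently from the paper's.

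\textbf{The paper's route.} The paper henselizes \emph{first}. Once $(L,w)$ is henselian, $w$ extends uniquely to every intermediate field of $L^{\mathrm{alg}}/L$. The Zorn argument for the value group can therefore run over plain subfields, with no valuations to keep track of. The residue field is then enlarged to $k'$ in one stroke by citing the henselian theory of unramified extensions \cite[Theorem 5.2.7]{EP05}, and henselianity is inherited by the resulting algebraic extension.

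\textbf{Your route.} In Stage~1 you work with pairs $(L_1,w_1)$ carrying compatible valuations. This is exactly what is needed without henselianity, and it works as you describe: unions of chains are fine, and the extension from \cref{lem:division-step} can be re-embedded into $L^{\mathrm{alg}}$ over $L_1$. You then build the residue extension by hand, via a second Zorn argument over extensions equipped with compatible $k_w$-embeddings of their residue fields into $k'$, and henselize only at the end. This costs more bookkeeping but is self-contained. It also shows that neither the value-group step nor the residue-field step needs henselianity.

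\textbf{Minor repairs.}
\begin{enumerate}[label=\textnormal{(\arabic*)}]
\item In Stage~2, separability plays no role. The monic lift $g$ is itself irreducible over $L_2$. Indeed, any monic factorization of $g$ over $L_2$ has coefficients integral over $\cO_{w_2}$, hence lying in $\cO_{w_2}$. Reducing it would then give a nontrivial factorization of $\bar g$. So $[L_2(\beta):L_2]=\deg\bar g$ directly.
\item The formula $w_2'\big(\sum_{i<d}a_i\beta^i\big)=\min_i w_2(a_i)$ is multiplicative because $k_2[X]/(\bar g)$ is a field. This is the one point you should actually justify.
\item Drop the remark that $H$ is ``countably generated'' over $\Gamma_w$. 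Your Zorn argument never uses it, and it is false in general. For example, take $\Gamma_w=G_0\times\Z$ ordered lexicographically, with $G_0\leqslant\R$ of uncountable rank and $D_n$ induced from $(G_0\otimes\Q)\times\Z$. Then $H/\Gamma_w$ is an uncountable direct sum of copies of $\Q/\Z$.
\end{enumerate}
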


\begin{proof}
Replacing $(L,w)$ by its henselization \cite[Theorem 5.2.2]{EP05}, which is an algebraic extension with the same value group and residue field, we may assume $(L,w)$ henselian. Fix an algebraic closure $L^{\mathrm{alg}}$ of $L$; being henselian, $(L,w)$ admits a unique extension of its valuation to every subextension of $L^{\mathrm{alg}}/L$, and we denote all these extensions again by $w$. The value group of any algebraic extension of $(L,w)$ lies inside the divisible hull of $\Gamma_{w}$, which also contains $H$.

Consider the family
\[
\mathcal{Z}=\big\{L'\,:\, L\subseteq L'\subseteq L^{\mathrm{alg}},\ \Gamma_{w|_{L'}}\subseteq H,\ k_{w|_{L'}}=k_{w}\big\},
\]
partially ordered by inclusion. It contains $L$, and it is closed under unions of chains, since the value group (resp.\ residue field) of a union is the union of the value groups (resp.\ residue fields). By Zorn's lemma, $\mathcal{Z}$ has a maximal element $\widetilde{F}$; write $\widetilde{\Gamma}$ for its value group.

We claim $\widetilde{\Gamma}=H$. Suppose not, and pick $h\in H\setminus\widetilde{\Gamma}$. Since $h\in H$ we have $mh\in\Gamma_{w}\subseteq\widetilde{\Gamma}$ for some $m\geqslant 1$, so there is a minimal $m\geqslant 2$ with $mh\in\widetilde{\Gamma}$. Let $p$ be a prime divisor of $m$ and set $\delta'=(m/p)\,h$. By minimality of $m$, $\delta'\notin\widetilde{\Gamma}$, while $\delta:=p\,\delta'=mh\in\widetilde{\Gamma}$. \Cref{lem:division-step} applied to $(\widetilde{F},w)$ yields an algebraic extension $\widetilde{F}'\subseteq L^{\mathrm{alg}}$ (the uniqueness of valuation extensions ensuring compatibility with $w$) with value group $\widetilde{\Gamma}+\Z\delta'\subseteq H$ and unchanged residue field. Thus $\widetilde{F}'\in\mathcal{Z}$ strictly contains $\widetilde{F}$, contradicting maximality. Hence $\Gamma_{w|_{\widetilde F}}=H$ and $k_{w|_{\widetilde F}}=k_{w}$.

Finally, $\widetilde{F}$ is henselian (algebraic extensions of henselian fields are henselian) and of equicharacteristic $0$, so there is an algebraic extension $(F,u)$ of $(\widetilde{F},w)$ with residue field $k'$ and the same value group $H$ \cite[Theorem 5.2.7]{EP05}; it is henselian, being algebraic over a henselian field. Property (2) holds by the properties of $H$ recalled in \cref{subsec:model-theory}: $H$ is a $\Z$-group whose distinguished element $1$ is $\varepsilon$, so $\varepsilon$ is its smallest positive element, and $nH\cap\Gamma_{w}=D_{n}$.
\end{proof}

Before stating the equivalence we isolate a purity property of extensions of $\Z$-groups which will give the compatibility over $K$.

\begin{lemma}\label{lem:purity}
Let $G\subseteq H$ be $\Z$-groups with the same smallest positive element $1$. Then $nH\cap G=nG$ for every $n\geqslant 1$.
\end{lemma}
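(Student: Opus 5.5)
The inclusion $nG\subseteq nH\cap G$ is immediate, so the content is the reverse inclusion. I plan to use the Presburger axioms for $G$ directly, namely division with remainder. Let $g\in G$ with $g=nh$ for some $h\in H$; we may assume $n\geqslant 2$, the case $n=1$ being trivial. Since $G$ is a $\Z$-group, for each $n$ it satisfies
\[
\forall x\ \bigvee_{i=0}^{n-1} D_{n}(x-i\cdot 1),
\]
where $D_n$ is interpreted as divisibility by $n$, i.e.\ $D_n^{G}=nG$. Hence there are $g'\in G$ and $i\in\{0,\dots,n-1\}$ with $g=ng'+i\cdot 1$.

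Next I move to $H$. Here $1$ denotes the common smallest positive element, and $g'\in G\subseteq H$, so
\[
i\cdot 1 = g-ng' = n(h-g') \in nH .
\]
I then claim that $i\cdot 1\in nH$ with $0\leqslant i\leqslant n-1$ forces $i=0$. If $0<i<n$, write $i\cdot 1=nk$ with $k\in H$. Since $H$ is ordered and $0<i\cdot1$, we get $k>0$, so $k\geqslant 1$ because $1$ is the smallest positive element of $H$. This gives $nk\geqslant n\cdot 1>i\cdot 1$, a contradiction. Therefore $i=0$ and $g=ng'\in nG$.

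The only point needing care is that the element $1$ is the same in both groups. The hypothesis guarantees this, and it is exactly what makes the inequality argument in $H$ valid. One could instead cite uniqueness of remainders in the $\Z$-group $H$ (the residues $0,1,\dots,n-1$ are distinct mod $nH$), which is the same fact. The step I consider the crux is the observation that the remainder $i\cdot 1$ computed in $G$ is also a legitimate remainder in $H$, so that uniqueness of the remainder in $H$ forces it to vanish. Everything else is routine.
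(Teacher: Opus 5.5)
Your proof is correct and follows the paper's argument: you take the remainder $i$ of $g$ modulo $nG$, observe that $i\cdot 1\in nH$, and conclude $i=0$. The only difference is that you show $i\cdot 1\notin nH$ for $0<i<n$ directly from $1$ being the least positive element of $H$, instead of invoking uniqueness of remainders in the $\Z$-group $H$, which makes it visible that only this discreteness of $H$ is actually used.
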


\begin{proof}
Only $nH\cap G\subseteq nG$ needs proof. Let $g\in G$ with $g=nh$, $h\in H$. Since $G$ is a $\Z$-group, there is a (unique) $r\in\{0,\dots,n-1\}$ with $g-r\cdot 1\in nG$. Then $r\cdot 1=g-(g-r\cdot1)\in nH$. But in the $\Z$-group $H$ there is exactly one $i\in\{0,\dots,n-1\}$ with $0-i\cdot 1\in nH$, namely $i=0$; applying this to $-r$ modulo $n$, we conclude $r=0$ and $g\in nG$.
\end{proof}

\begin{proposition}\label{prop:link}
Let $L$ be a field extension of $K$. The following are equivalent:
\begin{enumerate}[label=\textnormal{(\arabic*)}]
\item $L$ is formally $T$-adic over $K$;
\item $L$ admits a $\tf$-valuation.
\end{enumerate}
\end{proposition}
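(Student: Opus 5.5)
For (1)$\Rightarrow$(2), I would use a restriction argument. Suppose $\mathbf{L}\models\tf$ with $\mathbf{K}\subseteq\mathbf{L}$ an $\cL$-embedding. By \cref{rem:Tforall-language} there is $\mathbf{M}\models T$ containing $\mathbf{L}$ as an $\cL$-substructure. By \cref{thm:qe}(1), $\varphi_v$ defines a valuation $u$ on $M$ whose value group $\Gamma_u$ is a $\Z$-group with smallest positive element $u(t)$.

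Let $w=u|_L$. Since $\mathbf{K}\subseteq\mathbf{M}$ is an embedding of models of $T$, it is elementary by model completeness (\cref{thm:qe}(2)). Hence $\varphi_v$ defines $\cO_K$ in $K$ and also $\cO_u\cap K$, so $w$ extends $v$. Then $\Gamma_w\leqslant\Gamma_u$ contains $w(t)=u(t)$. Expand $\Gamma_w$ by $D_n=n\Gamma_u\cap\Gamma_w$. The structure $(\Gamma_w,w(t),(D_n))$ is then a substructure of the $\Z$-group $(\Gamma_u,u(t),(n\Gamma_u))$, so it models $\Presf$, and $w$ is a $\tf$-valuation.

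For (2)$\Rightarrow$(1), let $w$ be a $\tf$-valuation on $L$ with expansion $(D_n)$. Apply \cref{thm:extension} with $\varepsilon=w(t)$ and $k'$ an algebraic closure of $k_w$. This gives a henselian algebraic extension $(F,u)$ of equicharacteristic $0$, with algebraically closed residue field and value group $H$ a $\Z$-group whose smallest positive element is $u(t)$.

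By \cref{prop:def-valuation}, $\varphi_v$ defines $\cO_u$ in $F$. Interpreting $P_n$ as the $n$-th powers, \cref{thm:qe}(1) gives $\mathbf{F}\models T$. Giving $L$ the induced structure, $\mathbf{L}\subseteq\mathbf{F}$ is a substructure, so $\mathbf{L}\models\tf$.

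It remains to check that $\mathbf{K}\subseteq\mathbf{L}$ is an $\cL$-embedding, i.e.\ that for $x\in K^\times$, $x$ is an $n$-th power in $F$ iff it is one in $K$. By \cref{lem:powers}, applied in both $K$ and $F$, this amounts to $n\Gamma_u\cap\Gamma_K=n\Gamma_K$. Here $u$ extends $w$, which extends $v$, so $\Gamma_K\subseteq\Gamma_u=H$. Both groups are $\Z$-groups with the same smallest positive element $v(t)=u(t)$, so \cref{lem:purity} gives exactly this equality. The constants and field operations are preserved trivially.

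The main point needing care is this compatibility over $K$. Without it the predicates on $K$ might not match, and \cref{lem:purity} is what resolves it. In the first direction, the parallel subtle point is the fact that $u|_K=v$, which relies on elementarity.
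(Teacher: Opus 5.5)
Your proposal is correct and follows essentially the same route as the paper. For (1)$\Rightarrow$(2) you restrict the valuation and the predicates $n\Gamma_M$ from an ambient model of $T$, and for (2)$\Rightarrow$(1) you use \cref{thm:extension} followed by \cref{lem:powers} and \cref{lem:purity} to check compatibility over $K$; your explicit appeal to \cref{prop:def-valuation}, to see that $\varphi_v$ defines $\cO_u$ in $F$, spells out a step the paper leaves implicit when invoking \cref{thm:qe}(1).
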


\begin{proof}
(1)$\Rightarrow$(2). Let $\mathbf{L}\models\tf$ be an $\cL$-structure on $L$ with $\mathbf{K}\subseteq\mathbf{L}$ an $\cL$-embedding. There is an $\cL$-embedding $\mathbf{L}\to\mathbf{M}$ with $\mathbf{M}\models T$; we may assume $\mathbf{K}\subseteq\mathbf{L}\subseteq\mathbf{M}$ as $\cL$-substructures. Then $\mathbf{K}\subseteq\mathbf{M}$ is an $\cL$-embedding between models of $T$, hence elementary by \cref{thm:qe}. Let $v_{M}$ be the valuation of $\mathbf{M}$ and define $w$ on $L$ by $\cO_{w}=\cO_{v_{M}}\cap L$. For $x\in K$ we have $x\in\cO_{v_{M}}$ iff $\mathbf{M}\models\varphi_{v}(x)$ iff $\mathbf{K}\models\varphi_{v}(x)$ iff $x\in\cO_{K}$, using elementarity; hence $\cO_{w}\cap K=\cO_{K}$, i.e.\ $w$ extends $v$. Moreover, setting $D_{n}=n\Gamma_{M}\cap\Gamma_{w}$, the structure $(\Gamma_{w},w(t),(D_{n})_{n})$ is an $\cL_{\Pres}$-substructure of the $\Z$-group $(\Gamma_{M},v_M(t),(n\Gamma_{M})_{n})$, hence a model of the universal theory $\Presf$. Thus $w$ is a $\tf$-valuation.

(2)$\Rightarrow$(1). Let $w$ be a $\tf$-valuation on $L$, with expansion $(\Gamma_{w},w(t),(D_{n})_{n})\models\Presf$. Apply \cref{thm:extension} with $\varepsilon=w(t)$ and $k'$ an algebraic closure of $k_{w}$: we obtain an algebraic extension $(F,u)$ of $(L,w)$, henselian of equicharacteristic $0$, with algebraically closed residue field and value group a $\Z$-group with smallest positive element $u(t)=w(t)$. By \cref{thm:qe}(1), interpreting each $P_{n}$ as the set of $n$-th powers makes $F$ a model of $T$; call this structure $\mathbf{F}$.

Restrict the $\cL$-structure of $\mathbf{F}$ to the subfield $L$: since $\cL$ adds only relation and constant symbols to the language of fields, and $t\in K\subseteq L$, this makes $L$ an $\cL$-substructure $\mathbf{L}\subseteq\mathbf{F}$, so $\mathbf{L}\models\tf$.

It remains to check that $\mathbf{K}\subseteq\mathbf{L}$ is an $\cL$-embedding, i.e.\ that $P_{n}^{\mathbf{F}}\cap K=P_{n}^{\mathbf{K}}$ for all $n$. Both $\Gamma_{K}$ and $\Gamma_{u}$ are $\Z$-groups with smallest positive element $v(t)=u(t)$, and $\Gamma_{K}\subseteq\Gamma_{u}$ since $u$ extends $v$. By \cref{lem:powers} applied in $\mathbf{F}$ and in $\mathbf{K}$, and by \cref{lem:purity},
\[
x\in P_{n}^{\mathbf{F}}\cap K^\times \iff u(x)\in n\Gamma_{u}\cap\Gamma_{K}=n\Gamma_{K}\iff x\in P_{n}^{\mathbf{K}},
\]
as required.
\end{proof}

\begin{corollary}\label{cor:gauss-T-adic}
The Gauss valuation on $\KrX$ extending $v$ is a $\tf$-valuation. In particular the rational function field $\KrX$ is formally $T$-adic over $K$.
\end{corollary}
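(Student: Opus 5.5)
The plan is to verify directly that the Gauss valuation $w$ on $\KrX$ is a $\tf$-valuation, and then to invoke \cref{prop:link}. Recall that $w$ is given on polynomials by $w\big(\sum_\alpha c_\alpha X^\alpha\big)=\min_\alpha v(c_\alpha)$ and is extended to $\KrX$ multiplicatively. It is a valuation extending $v$, and it has value group $\Gamma_w=\Gamma_K$ and residue field $k_K(\overline{X_1},\dots,\overline{X_n})$. These are standard facts; see \cite[\S2.2]{EP05}. To reduce to one variable if needed, note that the Gauss valuation on $K(X_1,\dots,X_n)$ is the iterated Gauss extension, since the content is multiplicative by Gauss's lemma. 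Either way, the value group does not grow.

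Because $\Gamma_w=\Gamma_K$ and $w(t)=v(t)$, the pair $(\Gamma_w,w(t))$ equals $(\Gamma_K,v(t))$. By \cref{thm:qe}(1), $\Gamma_K$ is a $\Z$-group with smallest positive element $v(t)$. We therefore expand it by $D_n=n\Gamma_K$. This gives a model of $\Pres$, and in particular of $\Presf$, with $1$ interpreted as $w(t)$. Hence $w$ is a $\tf$-valuation in the sense of \Cref{def:formally-T-adic}(2).

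It follows from \cref{prop:link}, direction (2)$\Rightarrow$(1), that $\KrX$ is formally $T$-adic over $K$.

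The only point requiring care is the claim $\Gamma_w=\Gamma_K$, i.e.\ that the Gauss extension is value-group preserving. This is immediate from the formula: the value of any polynomial is the value of one of its coefficients in $K$, and quotients of such values stay in $\Gamma_K$. I therefore expect no genuine obstacle; all the substantive work has already been done in \cref{thm:extension} and \cref{prop:link}.
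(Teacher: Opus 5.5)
Your proposal is correct and follows the paper's proof. The Gauss valuation extends $v$ with $\Gamma_w=\Gamma_K$ and $w(t)=v(t)$, and $(\Gamma_K,v(t),(n\Gamma_K)_n)$ is a $\Z$-group by \cref{thm:qe}(1), hence a model of $\Presf$; \cref{prop:link} then gives that $\KrX$ is formally $T$-adic.
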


\begin{proof}
Recall that the Gauss valuation on $K(X_{1})$ is defined by $w\big(\sum_{\nu}a_{\nu}X_{1}^{\nu}\big)=\min_{\nu}v(a_{\nu})$ on polynomials and extended to quotients; iterating coordinate by coordinate yields a valuation $w$ on $\KrX$ which extends $v$ with $\Gamma_{w}=\Gamma_{K}$ and $w(t)=v(t)$ \cite[Corollary 2.2.2]{EP05}. The expansion $(\Gamma_{K},v(t),(n\Gamma_{K})_{n})$ is a $\Z$-group (\cref{thm:qe}(1)), in particular a model of $\Presf$. Conclude by \cref{prop:link}.
\end{proof}

\begin{remark}\label{rem:no-residue}
Being formally $T$-adic imposes no restriction on residue fields: only the value group is constrained. Indeed, for \emph{any} field extension $k_{0}$ of $\C$, the Laurent series field $L=k_{0}(\!(t)\!)\supseteq\ct$ has value group $\Z$ with $w(t)=1$ minimal, so $L$ is formally $T$-adic over $\ct$ by \cref{prop:link}, with residue field $k_{0}$. 
\end{remark}

\section{The \texorpdfstring{$t$}{t}-Kochen operator and the characterization theorem}\label{sec:kochen}

\begin{definition}\label{def:kochen}
Let $L$ be a field extension of $K$.
\begin{enumerate}[label=\textnormal{(\arabic*)}]
\item The \emph{$t$-Kochen operator} is the rational map $\gam(x)=\dfrac{x}{1+tx^{2}}$, defined at all $x\in L$ with $1+tx^{2}\neq0$.
\item $\gam(L)=\{\gam(x): x\in L,\ 1+tx^{2}\neq 0\}$.
\item The \emph{$t$-Kochen ring} of $L$ is
\[
\Lam_{L}=\Big\{\frac{a}{1+tb}\;:\;a,b\in\cO_{K}[\gam(L)],\ 1+tb\neq0\Big\}.
\]
\end{enumerate}
\end{definition}

It is not difficult to see that $\Lam_{L}$ is a subring of $L$ containing $\cO_{K}[\gam(L)]$. 


\begin{proposition}\label{prop:gamma-into-O}
Let $w$ be a valuation on $L$ such that $w(t)$ is the smallest positive element of $\Gamma_{w}$. Then:
\begin{enumerate}[label=\textnormal{(\arabic*)}]
\item $1+tx^{2}\neq0$ for every $x\in L$; in particular $\gam\colon L\to L$ is total;
\item $w(\gam(x))\geqslant 0$ for every $x\in L$;
\item if moreover $w$ extends $v$, then $\Lam_{L}\subseteq\cO_{w}$ and $w(\lambda)=w(a)$ for $\lambda=a/(1+tb)$ as in \cref{def:kochen}(3).
\end{enumerate}
\end{proposition}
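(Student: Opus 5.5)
The whole proposition rests on one parity observation: since $w(t)$ is the smallest positive element of $\Gamma_{w}$, it is not of the form $2\alpha$ with $\alpha\in\Gamma_w$. Indeed, $w(t)=2\alpha$ with $\alpha>0$ would give $0<\alpha<w(t)$, which contradicts minimality. Consequently, for every $x\in L^{\times}$ the value $w(tx^{2})=w(t)+2w(x)$ differs from $0$, because $w(t)=-2w(x)$ is exactly the excluded equation. This settles (1) at once: if $1+tx^{2}=0$, then $w(tx^{2})=w(-1)=0$, a contradiction (and $x=0$ is trivial).

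For (2), I will split according to the sign of $w(t)+2w(x)$, which is never zero.
\begin{itemize}
\item If $w(t)+2w(x)>0$, then $1+tx^{2}$ is a unit, so $w(\gam(x))=w(x)$. Since $w(t)$ is the least positive value, $w(t)+2w(x)>0$ forces $2w(x)\geqslant 0$, i.e.\ $w(x)\geqslant0$. To justify this, assume $w(x)<0$. Then $-2w(x)\geqslant 2w(t)>w(t)$, because $-w(x)\geqslant w(t)$, so $w(t)+2w(x)<0$, a contradiction.
\item If $w(t)+2w(x)<0$, then $w(1+tx^{2})=w(t)+2w(x)$. Hence $w(\gam(x))=-w(t)-w(x)$, and this must be shown to be $\geqslant 0$, i.e.\ $w(x)\leqslant -w(t)$. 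From $2w(x)<-w(t)<0$ we get $w(x)<0$, so $-w(x)\geqslant w(t)$, which is the required inequality.
\end{itemize}
Both cases rely on the same discreteness fact: no value lies strictly between $0$ and $w(t)$, and neither does its negative.

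For (3), $\cO_{K}\subseteq\cO_{w}$ because $w$ extends $v$, and $\gam(L)\subseteq\cO_{w}$ by (2). Hence $\cO_K[\gam(L)]\subseteq\cO_w$. For $a,b$ in this ring, $w(tb)\geqslant w(t)>0$, so $1+tb$ is a unit and $w(a/(1+tb))=w(a)\geqslant0$. The only delicate point in the proposition is the case analysis in (2); the remaining steps are immediate.
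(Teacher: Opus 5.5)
Your proof is correct and follows essentially the same route as the paper: the parity observation $w(t)\notin 2\Gamma_{w}$ gives (1), a two-case valuation computation gives (2), and your argument for (3) is the paper's. The only difference is cosmetic. You split (2) on the sign of $w(t)+2w(x)$, whereas the paper splits on the sign of $w(x)$. Under the minimality hypothesis these are the same dichotomy, so your first case needs the small extra step showing $w(x)\geqslant 0$, which the paper's split gets for free.
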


\begin{proof}
Note first that minimality of $w(t)$ implies $w(t)\notin2\Gamma_{w}$: if $w(t)=2\gamma$ then $0<\gamma<w(t)$, a contradiction.

(1) If $1+tx^{2}=0$ then $w(t)+2w(x)=w(-1)=0$, so $w(t)=-2w(x)\in2\Gamma_{w}$, a contradiction.

(2) If $w(x)\geqslant0$ then $w(tx^{2})\geqslant w(t)>0$, so $w(1+tx^{2})=0$ and $w(\gam(x))=w(x)\geqslant0$. If $w(x)<0$ then $w(x)\leqslant -w(t)$ by minimality, so $w(tx^{2})=w(t)+2w(x)\leqslant-w(t)<0$; hence $w(1+tx^{2})=w(t)+2w(x)$ and
\[
w(\gam(x))=w(x)-w(t)-2w(x)=-w(x)-w(t)\geqslant0.
\]

(3) By (2) and $\cO_{K}\subseteq\cO_{w}$ (as $w$ extends $v$) we get $\cO_{K}[\gam(L)]\subseteq\cO_{w}$. For $b\in\cO_{K}[\gam(L)]$ we then have $w(tb)\geqslant w(t)>0$, so $w(1+tb)=0$, and $w(a/(1+tb))=w(a)\geqslant0$.
\end{proof}

The next result shows that over the base model the Kochen operator generates the whole valuation ring, in analogy with \cite[Theorem 6.4]{prestelroquette}.

\begin{proposition}\label{prop:gamma-K}
$\gam(K)=\cO_{K}$ and $\Lam_{K}=\cO_{K}[\gam(K)]=\cO_{K}$.
\end{proposition}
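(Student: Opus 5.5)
We prove the chain of inclusions
\[
\cO_{K}\subseteq\gam(K)\subseteq\cO_{K}[\gam(K)]\subseteq\Lam_{K}\subseteq\cO_{K}.
\]
This yields both equalities at once. The middle two inclusions are immediate from the definitions, taking $b=0$ for the third. The last inclusion is \cref{prop:gamma-into-O}(3) applied to $w=v$ on $L=K$: by \cref{thm:qe}(1), $v(t)$ is the smallest positive element of $\Gamma_{K}$. The same proposition also gives $\gam(K)\subseteq\cO_{K}$ directly. So the whole content is the inclusion $\cO_{K}\subseteq\gam(K)$, namely that every $c\in\cO_{K}$ has the form $x/(1+tx^{2})$ for some $x\in K$.

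For $c=0$ we take $x=0$. For $c\neq0$ we must solve $ctx^{2}-x+c=0$ in $K$. Its discriminant is $1-4tc^{2}$, so a solution exists as soon as $1-4tc^{2}$ is a square in $K$. Since $c\in\cO_{K}$ and $v(t)>0$, we have $v(4tc^{2})>0$, so $1-4tc^{2}$ is a principal unit. Then $v(1-4tc^{2})=0\in2\Gamma_{K}$, and \cref{lem:powers} (applicable because $K$ is henselian of equicharacteristic $0$ with algebraically closed residue field) shows it is a square. Alternatively, Hensel's lemma applies directly to $Y^{2}-(1-4tc^{2})$, whose reduction $Y^{2}-1$ has simple roots $\pm1$.

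We write $1-4tc^{2}=s^{2}$ and choose the root
\[
x=\frac{1-s}{2tc}.
\]
This $x$ satisfies the quadratic, hence $x=c(1+tx^{2})$ with $1+tx^{2}\neq0$ by \cref{prop:gamma-into-O}(1), so $\gam(x)=c$. The choice of root is in fact irrelevant for existence, since either root gives a preimage. Still, taking $s\equiv1$ gives $x=c+O(t)\in\cO_{K}$, which matches the first case in the proof of \cref{prop:gamma-into-O}(2).

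The only step with any substance is obtaining the square root of $1-4tc^{2}$ via henselianity, and it is routine given \cref{lem:powers}. We therefore expect no real obstacle beyond keeping the chain of inclusions in order.
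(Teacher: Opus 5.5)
Your proof is correct and takes essentially the same route as the paper: the chain of inclusions reduces everything to solving $ctx^{2}-x+c=0$ in $K$ by henselianity, and the remaining inclusions come from \cref{prop:gamma-into-O}. The only difference is cosmetic: the paper applies Hensel's lemma directly to $ctY^{2}-Y+c$, whose reduction $-Y+\res(c)$ has a simple root, which yields the preimage in $\cO_{K}$ without going through the square root of the discriminant $1-4tc^{2}$.
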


\begin{proof}
The inclusion $\gam(K)\subseteq\cO_{K}$ holds by \cref{prop:gamma-into-O}(2) applied to $w=v$. Conversely, let $a\in\cO_{K}$ and consider $f(Y)=atY^{2}-Y+a\in\cO_{K}[Y]$. Its reduction is $-Y+\res(a)$, which has the simple root $\res(a)$ (the derivative reduces to $-1$). Since $(K,v)$ is henselian, there is $b\in\cO_{K}$ with $f(b)=0$ and hence $a(1+tb^{2})=b$; note $1+tb^{2}\ne0$ by \cref{prop:gamma-into-O}(1), so $a=\gam(b)$. Thus $\cO_{K}=\cO_{K}[\gam(K)]$, and then every element $a/(1+tb)$ of $\Lam_{K}$ lies in $\cO_{K}$ since $1+tb$ is a unit of $\cO_{K}$.
\end{proof}

We now prove the two key lemmas for the characterization: the first one extracts from the Kochen operator the discreteness of the value group, and the second one converts discreteness into a $\Presf$-structure.

\begin{lemma}\label{lem:gap}
Let $w$ be a valuation on a field $L$ with $w(t)>0$ and $w(2)=0$, and suppose $\gam(x)\in\cO_{w}$ for every $x\in L$ with $1+tx^{2}\neq0$. Then $w(t)$ is the smallest positive element of $\Gamma_{w}$.
\end{lemma}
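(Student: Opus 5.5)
The plan is to argue by contradiction. We show that any positive element of $\Gamma_{w}$ strictly below $w(t)$ produces some $x\in L$ with $1+tx^{2}\neq0$ and $w(\gam(x))<0$, which contradicts the hypothesis. So suppose $\delta\in\Gamma_{w}$ satisfies $0<\delta<w(t)$, and fix $x\in L$ with $w(x)=-\delta$. Then $w(tx^{2})=w(t)-2\delta$. We split into cases according to the sign of $w(t)-2\delta$, mirroring the computation in \cref{prop:gamma-into-O}(2).

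\emph{Case $2\delta<w(t)$.} Then $w(tx^{2})>0$, so $w(1+tx^{2})=0$. In particular $1+tx^{2}\neq0$, and $w(\gam(x))=w(x)=-\delta<0$.

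\emph{Case $2\delta>w(t)$.} Then $w(tx^{2})<0$, so $w(1+tx^{2})=w(t)-2\delta$, and
\[
w(\gam(x))=-\delta-w(t)+2\delta=\delta-w(t)<0 .
\]

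\emph{Case $2\delta=w(t)$.} This is the only delicate case, because $tx^{2}$ is a unit and $1+tx^{2}$ may cancel. The key observation is that cancellation only helps us. If $1+tx^{2}\neq0$, then $w(1+tx^{2})\geqslant\min\{0,w(tx^{2})\}=0$, so
\[
w(\gam(x))=-\delta-w(1+tx^{2})\leqslant-\delta<0 .
\]
It remains to handle the degenerate possibility $1+tx^{2}=0$. Here I would replace $x$ by $x'=(1+t)x$. Since $w(1+t)=0$ we still have $w(x')=-\delta$, and using $tx^{2}=-1$ we compute
\[
1+tx'^{2}=1-(1+t)^{2}=-t(2+t).
\]
Because $w(2)=0$ and $w(t)>0$, we get $w(2+t)=0$. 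Hence $1+tx'^{2}\neq0$, $w(1+tx'^{2})=w(t)$, and $w(\gam(x'))=-\delta-w(t)<0$.

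In every case we obtain a contradiction, so no $\delta$ exists and $w(t)$ is the smallest positive element of $\Gamma_{w}$.

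I expect the main obstacle to be the boundary case $2\delta=w(t)$. One might first try to rescale $x$ by a unit so as to avoid residue $-1$ for $tx^{2}$, but that fails for small residue fields such as $\mathbb F_{3}$. The argument above avoids this by noting that a positive valuation of $1+tx^{2}$ only makes $w(\gam(x))$ more negative. The exact zero $1+tx^{2}=0$ is then handled by the perturbation $x\mapsto(1+t)x$, and this is the only place where the hypothesis $w(2)=0$ is used.
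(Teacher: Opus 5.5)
Your proof is correct and follows essentially the same route as the paper's: the same case split on the sign of $w(t)+2w(x)$, the same observation that $w(1+tx^{2})\geqslant 0$ forces $w(\gamma(x))\leqslant w(x)<0$, and a perturbation to remove the degenerate case $1+tx^{2}=0$. The only difference is cosmetic. The paper shifts additively, $x\mapsto x+1$, so that $1+t(x+1)^{2}=t(2x+1)$, whereas you use the multiplicative shift $x\mapsto(1+t)x$; both use $w(2)=0$ at exactly that step.
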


\begin{proof}
It suffices to show that no element of $\Gamma_{w}$ lies in the open interval $(-w(t),0)$: if $0<\gamma<w(t)$ for some $\gamma\in\Gamma_{w}$, then $-\gamma\in(-w(t),0)$.

Suppose then that $\ell\in L$ satisfies $-w(t)<w(\ell)<0$.

Assume first $1+t\ell^{2}=0$. Then $\ell\ne-\tfrac12$, since $w(-\tfrac12)=-w(2)=0$. Set $\ell'=\ell+1$; as $w(\ell)<0=w(1)$ we get $w(\ell')=w(\ell)\in(-w(t),0)$, and
\[
1+t(\ell')^{2}=\underbrace{1+t\ell^{2}}_{=0}+\,t(2\ell+1)=t(2\ell+1)\neq0 .
\]
Moreover $w(2\ell+1)=w(\ell)$ because $w(2\ell)=w(\ell)<0=w(1)$. Hence
\[
w(\gam(\ell'))=w(\ell')-w(t)-w(2\ell+1)=w(\ell)-w(t)-w(\ell)=-w(t)<0,
\]
contradicting $\gam(\ell')\in\cO_{w}$.

Assume now $1+t\ell^{2}\neq0$ and write $\gam(\ell)=\ell/(1+t\ell^{2})$. If $w(t)+2w(\ell)<0$ then $w(1+t\ell^{2})=w(t)+2w(\ell)$ and
\[
w(\gam(\ell))=w(\ell)-w(t)-2w(\ell)=-w(\ell)-w(t)<0,
\]
since $w(\ell)>-w(t)$. If instead $w(t)+2w(\ell)\geqslant0$ then both $1$ and $t\ell^{2}$ have non-negative value, so $w(1+t\ell^{2})\geqslant0$ and
\[
w(\gam(\ell))=w(\ell)-w(1+t\ell^{2})\leqslant w(\ell)<0 .
\]
In both cases $\gam(\ell)\notin\cO_{w}$, a contradiction.
\end{proof}

\begin{lemma}\label{lem:expansion}
Let $\Gamma$ be an ordered abelian group and $\varepsilon\in\Gamma$ with $\varepsilon>0$. Then $(\Gamma,\varepsilon)$ can be expanded by predicates $(D_{n})_{n\geqslant2}$ to a model of $\Presf$ (with $1$ interpreted as $\varepsilon$) if and only if $\varepsilon$ is the smallest positive element of $\Gamma$.
\end{lemma}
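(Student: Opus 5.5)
The forward direction is immediate. If $(\Gamma,\varepsilon,(D_{n})_{n})\models\Presf$, then the universal sentences $0<1$ and $\forall x\,(x\leqslant 0\vee x\geqslant 1)$ hold in $\Z$, so they lie in $\Presf$. With $1$ interpreted as $\varepsilon$, they say exactly that $\varepsilon$ is the smallest positive element of $\Gamma$.

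For the converse, assume $\varepsilon$ is the least positive element of $\Gamma$. Since $\Presf$ is the universal theory of $\Pres$, a structure models $\Presf$ exactly when it embeds into a model of $\Pres$. So the plan is to build a $\Z$-group $H$ containing $\Gamma$ as an ordered subgroup, with the smallest positive element of $H$ equal to $\varepsilon$. We then set $D_{n}=nH\cap\Gamma$. With this choice $(\Gamma,\varepsilon,(D_{n}))$ is an $\cL_{\Pres}$-substructure of $(H,\varepsilon,(nH)_{n})$, hence a model of $\Presf$.

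To build $H$, consider the convex subgroup $C=\Z\varepsilon$. It is convex because any $\gamma$ with $0<\gamma<k\varepsilon$ satisfies, by minimality of $\varepsilon$ and induction on $k$, $\gamma=j\varepsilon$ for some $j$: apply minimality to $\gamma-j\varepsilon$ for the largest $j$ with $j\varepsilon\leqslant\gamma$.

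Let $\Delta=\Gamma/C$ with the induced order, and let $\Delta^{\mathrm{div}}$ be its divisible hull. We want an ordered group $H$ with convex subgroup $\Z\varepsilon$ and quotient $\Delta^{\mathrm{div}}$ into which $\Gamma$ embeds. The candidate is $H=\Z\times\Delta^{\mathrm{div}}$ with a suitable cocycle. I would avoid cocycles and argue as follows instead. $\Gamma$ embeds into $\Gamma\otimes\Q$, but that group is not discrete. So instead, choose a lexicographic realization: by Hahn-type arguments, or more concretely since $\Z$ is injective only relative to divisible data, first extend $\Gamma$ to $\Gamma'=\Gamma+\Q\text{-span}$ of a complement.

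A cleaner route is the following. Let $\pi\colon\Gamma\to\Delta$ be the quotient and define
\[
H=\{(q,\delta)\in\Q\times\Delta^{\mathrm{div}}\},
\]
ordered lexicographically with $\delta$ first. Then pick a group homomorphism $s\colon\Gamma\to\Q$ with $s(\varepsilon)=1$; such an $s$ exists because $\Q$ is injective and $\Z\varepsilon$ is a pure subgroup of $\Gamma$ (purity follows from convexity and minimality). The map $\gamma\mapsto(s(\gamma),\pi(\gamma)\otimes1)$ is then an order embedding into $\Q\times_{\mathrm{lex}}\Delta^{\mathrm{div}}$. Finally take
\[
H=\{(q,\delta): q\in s(\Gamma)+\Z\text{-saturation}\}
\]
adjusted to contain, for each $\delta$, a coset $q_{\delta}+\Z$ chosen compatibly. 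Concretely, $H$ is the subgroup generated by the image of $\Gamma$ and the elements $(r_{\delta},\delta)$ for $\delta\in\Delta^{\mathrm{div}}$, where the $r_{\delta}$ are chosen so that the fiber over $0$ stays $\Z$.

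Two properties then need checking: that $H$ is discrete with least positive element $(1,0)=\varepsilon$, and that $H/\Z\varepsilon\cong\Delta^{\mathrm{div}}$ is divisible. An ordered group whose least positive element $1$ generates a convex subgroup with divisible quotient is a $\Z$-group, since every element can then be divided by $n$ modulo $\Z$.

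The main obstacle is making the fiber over $0$ equal to $\Z$ rather than $\Q$. I would handle it with an extension-of-groups argument: $H$ should be an extension $0\to\Z\to H\to\Delta^{\mathrm{div}}\to0$ extending $0\to\Z\to\Gamma\to\Delta\to0$. Such an extension exists because $\operatorname{Ext}(\Delta^{\mathrm{div}}/\Delta,\cdot)$ obstructions can be killed by a pushout along $\Z\to\Z$; alternatively, I would argue by Zorn's lemma, adjoining $n$-th roots modulo $\Z\varepsilon$ one prime at a time as in \cref{lem:division-step}, ordering each new element by its image in $\Delta^{\mathrm{div}}$.
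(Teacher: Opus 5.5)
Your forward direction, the reduction of the converse to finding a $\Z$-group $H\supseteq\Gamma$ with least positive element $\varepsilon$ (then $D_n=nH\cap\Gamma$), and the convexity of $\Z\varepsilon$ are correct. So are the order embedding $\iota\colon\gamma\mapsto(s(\gamma),\pi(\gamma))$ into $\Q\times_{\mathrm{lex}}\Delta^{\mathrm{div}}$ (for $s$ you only need that $\Q$ is injective; purity plays no role) and your closing criterion for being a $\Z$-group.

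The gap is the step that carries the whole difficulty. You never show that the elements $(r_\delta,\delta)$ can be chosen so that the fiber over $0$ stays $\Z$; as written, that sentence restates what has to be proved. An arbitrary choice fails: if $\delta=\pi(\gamma)\in\Delta$ and $r_\delta-s(\gamma)\notin\Z$, then $(r_\delta,\delta)-\iota(\gamma)$ is a non-integral element of the fiber over $0$. Your justifications do not fill this. ``$\Z$ is injective only relative to divisible data'' is not a meaningful statement, since $\Z$ is not injective. ``Obstructions can be killed by a pushout along $\Z\to\Z$'' does not describe an argument; the correct homological reason is that $\operatorname{Ext}^1_{\Z}(\Delta^{\mathrm{div}},\Z)\to\operatorname{Ext}^1_{\Z}(\Delta,\Z)$ is onto because $\operatorname{Ext}^2_{\Z}=0$. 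The Zorn alternative can be made to work, but you omit the only check that matters. Writing $\rho$ for the projection to $\Delta^{\mathrm{div}}$, suppose $\delta'\notin\rho(H')$ while $p\delta'\in\rho(H')$ for a prime $p$. Adjoining an element over $\delta'$ whose $p$-th multiple lies in $H'$ does not enlarge the fiber over $0$, because $\delta'$ has order exactly $p$ modulo $\rho(H')$.

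The cleanest repair stays inside your own framework. Let $\bar s\colon\Delta\to\Q/\Z$ be $\pi(\gamma)\mapsto s(\gamma)+\Z$, which is well defined since $s(\Z\varepsilon)=\Z$. Compatibility of the $r_\delta$ means exactly that $\delta\mapsto r_\delta+\Z$ is a homomorphism $\Delta^{\mathrm{div}}\to\Q/\Z$ extending $\bar s$. Since $\Q/\Z$ is divisible, hence injective, such an extension $\tilde r$ exists. Set $H=\{(q,\delta)\in\Q\times\Delta^{\mathrm{div}}: q+\Z=\tilde r(\delta)\}$, ordered lexicographically with $\delta$ first. Then $H$ contains $\iota(\Gamma)$ and maps onto $\Delta^{\mathrm{div}}$ with kernel $\Z\times\{0\}$. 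It is therefore a $\Z$-group with least positive element $(1,0)=\iota(\varepsilon)$.

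With this fix your argument is a genuinely different, global route. It produces a $\Z$-group with $H/\Z\varepsilon\cong\Delta^{\mathrm{div}}$ in one stroke, at the cost of an injectivity argument needed because $0\to\Z\varepsilon\to\Gamma\to\Delta\to0$ need not split. The paper instead uses compactness to reduce to finitely generated $G\ni\varepsilon$. There $G/\Z\varepsilon$ is free, the extension splits, and $G\cong(G/\Z\varepsilon)\times_{\mathrm{lex}}\Z$ embeds into $\big((G/\Z\varepsilon)\otimes\Q\big)\times_{\mathrm{lex}}\Z$, so no extension problem ever arises.
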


\begin{proof}
($\Rightarrow$) The sentence $\forall x\,(x\leqslant0\vee x\geqslant1)$ is universal and true in $\mathbf{Z}$, hence belongs to $\Presf$.

($\Leftarrow$) Assume $\varepsilon$ is the smallest positive element of $\Gamma$. It suffices to embed $(\Gamma,\varepsilon)$, as an ordered abelian group with a distinguished element, into a $\Z$-group $M$ with $\varepsilon$ mapping to the smallest positive element of $M$: setting $D_{n}=nM\cap\Gamma$ then turns $(\Gamma,\varepsilon,(D_{n})_{n})$ into an $\cL_{\Pres}$-substructure of $M\models\Pres$, hence into a model of the universal theory $\Presf$.

By compactness, it is enough to prove that $\Pres\cup\Delta(\Gamma,\varepsilon)$ is consistent, where $\Delta(\Gamma,\varepsilon)$ is the atomic diagram of $(\Gamma,\varepsilon)$ in the language $\{+,-,<,0\}$ with constants for the elements of $\Gamma$, the constant for $\varepsilon$ being identified with $1$. A finite subset of this theory mentions finitely many elements $\gamma_{1},\dots,\gamma_{k}\in\Gamma$, and quantifier-free formulas about them are absolute between $\Gamma$ and the subgroup $G=\langle\gamma_{1},\dots,\gamma_{k},\varepsilon\rangle$ with the induced order. So it suffices to embed each such $G$ into a $\Z$-group, with $\varepsilon$ going to the smallest positive element.

\emph{Step 1: $\Z\varepsilon$ is convex in $G$.} We check, by induction on $m\geqslant1$, that every $\gamma\in G$ with $0<\gamma\leqslant m\varepsilon$ lies in $\Z\varepsilon$. For $m=1$, minimality of $\varepsilon$ in $\Gamma$ gives $\gamma\geqslant\varepsilon$, so $\gamma=\varepsilon$. If $0<\gamma\leqslant(m+1)\varepsilon$, then again $\gamma\geqslant\varepsilon$, so $0\leqslant\gamma-\varepsilon\leqslant m\varepsilon$, and either $\gamma=\varepsilon$ or, by induction, $\gamma-\varepsilon\in\Z\varepsilon$.

\emph{Step 2:} Since $\Z\varepsilon$ is convex, the quotient $\bar{G}=G/\Z\varepsilon$ is an ordered abelian group, with $\bar{g}>0$ if and only if $g>0$ and $g\notin\Z\varepsilon$. Being finitely generated and torsion-free, $\bar{G}$ is free abelian (this uses only the abstract group structure), so the exact sequence $0\to\Z\varepsilon\to G\to\bar{G}\to0$ splits: there is a subgroup $G_{0}\leqslant G$ with $G=G_{0}\oplus\Z\varepsilon$. For $g=g_{0}+m\varepsilon$ with $g_{0}\in G_{0}$ and $m\in\Z$, the sign of $g$ is the sign of $\bar{g}=\bar{g_{0}}$ when $\bar{g_{0}}\neq0$, and the sign of $m$ otherwise, in which case $g_{0}=0$; hence $g\mapsto(\bar{g_{0}},m)$ is an isomorphism of ordered groups between $G$ and $\bar{G}\times\Z$ ordered lexicographically, sending $\varepsilon$ to $(0,1)$.

\emph{Step 3:} Let $D=\bar{G}\otimes_{\Z}\Q$ be the divisible hull of $\bar{G}$, with its unique compatible order, and let $M=D\times\Z$, ordered lexicographically. Then $M$ is discretely ordered with smallest positive element $(0,1)$, and for every $n\geqslant2$ and $(d,m)\in M$ we have $(d,m)-(0,r)=n\cdot(d/n,(m-r)/n)\in nM$, where $r$ is the remainder of $m$ modulo $n$; moreover $(0,r)\in nM$ forces $n\mid r$. Hence $M/nM$ is cyclic of order $n$ generated by $(0,1)+nM$, and $M$ is a $\Z$-group. By Step 2, $G$ embeds into $M$ as an ordered group with $\varepsilon\mapsto(0,1)$, which completes the proof.
\end{proof}

We can now prove Theorem~A.

\begin{theorem}\label{thm:characterization}
Let $L$ be a field extension of $K$. The following are equivalent:
\begin{enumerate}[label=\textnormal{(\arabic*)}]
\item $L$ is formally $T$-adic over $K$;
\item $L$ admits a $\tf$-valuation;
\item $1/t\notin\cO_{K}[\gam(L)]$;
\item $1/t\notin\Lam_{L}$.
\end{enumerate}
\end{theorem}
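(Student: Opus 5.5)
(1)$\Leftrightarrow$(2) is \cref{prop:link}, so it remains to link (2), (3), (4). We will prove (2)$\Rightarrow$(4)$\Rightarrow$(3)$\Rightarrow$(2).

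\emph{(2)$\Rightarrow$(4).} Let $w$ be a $\tf$-valuation on $L$. As noted after \cref{def:formally-T-adic}, $w(t)$ is the smallest positive element of $\Gamma_{w}$, and $w$ extends $v$. By \cref{prop:gamma-into-O}(3), $\Lam_{L}\subseteq\cO_{w}$. Since $w(1/t)=-w(t)<0$, we get $1/t\notin\Lam_{L}$.

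\emph{(4)$\Rightarrow$(3).} This is immediate, because $\cO_{K}[\gam(L)]\subseteq\Lam_{L}$ (take $b=0$).

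\emph{(3)$\Rightarrow$(2).} This is the substantive direction. Put $A=\cO_{K}[\gam(L)]$ and assume $1/t\notin A$. Then $t$ is not a unit of $A$, so $tA$ is a proper ideal and lies in some maximal ideal $\mathfrak m$ of $A$. By Chevalley's extension theorem there is a valuation ring $\cO_{w}$ of $L$ with $A\subseteq\cO_{w}$ and $\mathfrak m\subseteq\cM_{w}$. We now check the required properties of $w$.
\begin{enumerate}[label=\textnormal{(\alph*)}]
\item $w(t)>0$, since $t\in\mathfrak m\subseteq\cM_{w}$.
\item $w(2)=0$, since $2$ is a unit of $\cO_{K}\subseteq A$.
\item $\gam(x)\in\cO_{w}$ whenever it is defined, since $\gam(L)\subseteq A\subseteq\cO_{w}$.
\end{enumerate}
By \cref{lem:gap}, $w(t)$ is then the smallest positive element of $\Gamma_{w}$, and \cref{lem:expansion} gives an expansion of $(\Gamma_{w},w(t))$ to a model of $\Presf$.

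The point that needs care is that $w$ \emph{extends} $v$, i.e.\ $\cO_{w}\cap K=\cO_{K}$. The inclusion $\cO_{K}\subseteq\cO_{w}\cap K$ holds because $\cO_{K}\subseteq A$. For the reverse inclusion, $\cO_{w}\cap K$ is a valuation ring of $K$ containing $\cO_{K}$, hence equal to a coarsening $\cO_{K}[1/c]$. If it were strictly larger than $\cO_{K}$, it would contain an element of negative $v$-value. Since $\Gamma_{K}$ is a $\Z$-group with smallest positive element $v(t)$, we would then have $v(x)\leqslant -v(t)$ for that element $x$, so $t/x\in\cO_{K}$ and hence $1/t=x\cdot(t/x)^{-1}\cdots$; more directly, $t^{-1}=x\cdot(1/(tx))$ with $1/(tx)$ of nonnegative value once $v(x)\geqslant -v(t)$ fails. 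The cleaner route is to argue that $w(t)>0$ forces the coarsening to be trivial. Every proper coarsening of $v$ corresponds to a nontrivial convex subgroup of $\Gamma_{K}$. Any nontrivial convex subgroup contains the smallest positive element $v(t)$, so $t$ would become a unit of $\cO_{w}\cap K$, contradicting $w(t)>0$. Hence $\cO_{w}\cap K=\cO_{K}$.

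So $w$ is a $\tf$-valuation, which gives (2). I expect this last verification, excluding a coarsening of $v$, to be the main obstacle. The convex-subgroup argument settles it.
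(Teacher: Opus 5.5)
Your proof is correct and follows the paper's argument essentially step for step. You use the same Chevalley extension from a maximal ideal containing $t\,\cO_{K}[\gam(L)]$, and the same exclusion of a proper coarsening of $v$ (a nontrivial convex subgroup must contain $v(t)$; the paper phrases this as $\cM_{K}$ being the only prime of $\cO_{K}$ containing $t$), followed by \cref{lem:gap} and \cref{lem:expansion}. You should delete the abandoned sentence about $\cO_{K}[1/c]$ and the fragment after it, since coarsenings are the localizations $(\cO_{K})_{\mathfrak p}$ and need not have that form; the convex-subgroup argument alone suffices.
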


\begin{proof}
(1)$\Leftrightarrow$(2) is \cref{prop:link}, and (4)$\Rightarrow$(3) is clear since $\cO_{K}[\gam(L)]\subseteq\Lam_{L}$.

(2)$\Rightarrow$(4). Let $w$ be a $\tf$-valuation. Then $w(t)$ is the smallest positive element of $\Gamma_{w}$, so by \cref{prop:gamma-into-O}(3), $\Lam_{L}\subseteq\cO_{w}$. Since $w(1/t)=-w(t)<0$, we get $1/t\notin\Lam_{L}$.

(3)$\Rightarrow$(2). Let $R=\cO_{K}[\gam(L)]$. The ideal $tR$ is proper: otherwise $1=t\rho$ for some $\rho\in R$, giving $1/t\in R$. Let $P$ be a maximal (hence prime) ideal of $R$ containing $tR$. By Chevalley's extension theorem \cite[Theorem 3.1.1]{EP05} there is a valuation ring $\cO_{w}$ of $L$ with $R\subseteq\cO_{w}$ and $\cM_{w}\cap R=P$; in particular $t\in\cM_{w}$, i.e.\ $w(t)>0$.

We claim that $w$ extends $v$. First, $\cO_{K}\subseteq R\subseteq\cO_{w}$. The set $q=\cM_{w}\cap\cO_{K}$ is a prime ideal of $\cO_{K}$ containing $t$. Since $v(t)$ is the smallest positive element of $\Gamma_{K}$, every nonzero convex subgroup of $\Gamma_{K}$ contains $v(t)$; as the prime ideals of $\cO_{K}$ are exactly the sets $\{x: v(x)>\Delta\}$ for $\Delta$ a convex subgroup of $\Gamma_{K}$, the only prime of $\cO_{K}$ containing $t$ is $\cM_{K}$. Hence $q=\cM_{K}$. Now $\cO'=\cO_{w}\cap K$ is a valuation ring of $K$ containing $\cO_{K}$, so $\cO'=(\cO_{K})_{q'}$ for the prime $q'=(\cM_{w}\cap K)\cap\cO_{K}=q=\cM_{K}$ \cite[Lemma 2.3.1]{EP05}; thus $\cO'=\cO_{K}$ and $w$ extends $v$.

In particular $w(2)=v(2)=0$, and $\gam(x)\in R\subseteq\cO_{w}$ for every $x\in L$ at which $\gam$ is defined. By the gap lemma (\cref{lem:gap}), $w(t)$ is the smallest positive element of $\Gamma_{w}$, and by the expansion lemma (\cref{lem:expansion}) the pair $(\Gamma_{w},w(t))$ expands to a model of $\Presf$. Hence $w$ is a $\tf$-valuation.
\end{proof}

\begin{examples}\label{ex:sanity}
\leavevmode
\begin{enumerate}[label=\textnormal{(\arabic*)}, leftmargin=*]
\item $L=\KrX$: by \cref{cor:gauss-T-adic}, $L$ is formally $T$-adic, so $1/t\notin\Lam_{\KrX}$. This fact will be used crucially in the proof of the Nullstellensatz.
\item $L=\C(\!(t^{1/2})\!)$ over $K=\ct$: the natural $t^{1/2}$-adic valuation $w$ has $w(t)=2w(t^{1/2})$, so it is not a $\tf$-valuation, and one may suspect that none exists. \Cref{thm:characterization} confirms this through a computation:
\[
\gam(t^{-1/2})=\frac{t^{-1/2}}{1+t\cdot t^{-1}}=\frac{t^{-1/2}}{2},
\]
so $t^{-1/2}=2\gam(t^{-1/2})\in\cO_{K}[\gam(L)]$ and $1/t=(t^{-1/2})^{2}\in\cO_{K}[\gam(L)]$; hence $L$ is not formally $T$-adic.
\item $L=k(I)$ for $I=(X^{2}+t)\subseteq K[X]$ ($n=1$): the polynomial $X^{2}+t$ is irreducible ($-t$ is not a square in $K$ since $v(t)\notin2\Gamma_{K}$), and $k(I)=K(\theta)$ with $\theta^{2}=-t$. Any $\tf$-valuation $w$ would give $w(t)=2w(\theta)\in2\Gamma_{w}$, which is impossible; so $k(I)$ is not formally $T$-adic and, by \cref{thm:characterization}, $1/t\in\Lam_{k(I)}$. Explicitly, a computation shows
\[
\gam(1+\theta^{-1})+\gam(1-\theta^{-1})=\frac{1+\theta}{t(2+\theta)}+\frac{\theta-1}{t(\theta-2)}=\frac{2(t+2)}{t(t+4)},
\]
whence $1/t=\frac{t+4}{2(t+2)}\big(\gam(1+\theta^{-1})+\gam(1-\theta^{-1})\big)\in\cO_{K}[\gam(k(I))]$, the factor $\frac{t+4}{2(t+2)}$ being a unit of $\cO_{K}$.
\end{enumerate}
\end{examples}

\section{The Nullstellensatz}\label{sec:nss}

We keep the notation of the previous sections: $\mathbf{K}\models T$, and $I$ denotes an ideal of $\KX$, not necessarily proper, with fixed generators $f_{1},\dots,f_{l}$ (recall that $\KX$ is noetherian).

\begin{definition}\label{def:T-radical}
Set $R=\Lam_{\KrX}\cdot\KX$, the subring of $\KrX$ consisting of all finite sums $\sum_{i}\lambda_{i}g_{i}$ with $\lambda_{i}\in\Lam_{\KrX}$ and $g_{i}\in\KX$; note that $\KX\subseteq R\subseteq\KrX$. The radical relevant to our Nullstellensatz is the $R$-radical $\rt[R]{I}$ of \cref{def:A-radical}. We say that $I$ is an \emph{$R$-ideal} if $\rt[R]{I}=I$.
\end{definition}


The first step is to show that elements of $R$ can be evaluated at every point of $K^{n}$, via the evaluation places of \cref{def:eval-places}.

\begin{lemma}\label{lem:places-R}
Let $\varphi\colon\KrX_{\infty}\to K_{\infty}$ be a place with $\varphi|_{K}=\mathrm{id}_{K}$. Then:
\begin{enumerate}[label=\textnormal{(\arabic*)}]
\item $\varphi(\gam(h))\in\cO_{K}$ for every $h\in\KrX$ with $1+th^{2}\neq0$;
\item $\varphi\big(\cO_{K}[\gam(\KrX)]\big)\subseteq\cO_{K}$ and $\varphi(\Lam_{\KrX})\subseteq K$;
\item if moreover $\varphi(X_{i})\neq\infty$ for $1\leqslant i\leqslant n$ --- as is the case for every evaluation place $P_{a,\sigma}$ --- then $\varphi(\omega)\neq\infty$ for every $\omega\in R$.
\end{enumerate}
\end{lemma}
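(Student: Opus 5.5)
The plan is to transport the valuation-theoretic information along $\varphi$. Since $\varphi$ is a place extending $\mathrm{id}_K$, its valuation ring $\cO_\varphi=\varphi^{-1}(K)$ defines a valuation $w_0$ on $\KrX$. I will compose $w_0$ with $v$: the composite place $\KrX_\infty\to K_\infty\to (k_K)_\infty$, where the second map is the residue place of $v$, is again a place. Its valuation ring is $\cO:=\{h\in\cO_\varphi:\varphi(h)\in\cO_K\}$. Let $w$ denote the corresponding valuation on $\KrX$. Because $\varphi|_K=\mathrm{id}$, we have $\cO\cap K=\cO_K$, so $w$ extends $v$ and $w(t)>0$.

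\textbf{Step 1.} I will show that $w(t)$ is the smallest positive element of $\Gamma_w$. I expect this to be the main obstacle. I would argue with convex subgroups. The coarsening $w_0$ corresponds to a convex subgroup $\Delta\leqslant\Gamma_w$, namely $\Gamma_w=$ an extension in which $\Delta\cong\Gamma_K$ is the value group of $v$ on the residue field $K$ of $w_0$. The value $w(t)$ lies in $\Delta$ and equals $v(t)$ there. Suppose $0<\gamma<w(t)$. By convexity $\gamma\in\Delta\cong\Gamma_K$, and then $\gamma<v(t)$ contradicts minimality in $\Gamma_K$. Hence $w(t)$ is minimal.

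A more hands-on alternative avoids this. Given $h$, if $w_0(h)>0$ then $\varphi(h)=0$ and $\varphi(1+th^2)=1$, so $\varphi(\gam(h))=0$. If $w_0(h)=0$, then $\varphi(\gam(h))=\gam(\varphi(h))\in\gam(K)=\cO_K$ by \cref{prop:gamma-K}; this uses \cref{prop:gamma-into-O}(1) to guarantee $1+t\varphi(h)^2\neq0$. If $w_0(h)<0$, then $\varphi(h)=\infty$, and I write $\gam(h)=h^{-1}/(h^{-2}+t)$ with $\varphi(h^{-1})=0$. This gives $\varphi(\gam(h))=0/t=0$. This case analysis proves (1) directly, and I will use it as the actual argument.

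\textbf{Step 2.} For (2), $\cO_K[\gam(\KrX)]$ consists of polynomial expressions with coefficients in $\cO_K$ in the elements $\gam(h)$, all of which have finite image. Since $\varphi$ preserves finite sums and products of finite values and fixes $\cO_K$, the image lands in $\cO_K$ by (1). For $\lambda=a/(1+tb)$ we get $\varphi(1+tb)=1+t\varphi(b)$, which is a unit of $\cO_K$ because $v(t\varphi(b))>0$. Hence $\varphi(\lambda)=\varphi(a)/(1+t\varphi(b))\in\cO_K\subseteq K$.

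\textbf{Step 3.} For (3), if each $\varphi(X_i)$ is finite, then every $g\in\KX$ has a finite image, since $\varphi$ is additive and multiplicative on finite values and fixes $K$. An element $\omega=\sum\lambda_ig_i\in R$ is then a finite sum of products of finite values, so $\varphi(\omega)\neq\infty$. For the evaluation places $P_{a,\sigma}$, the paper notes that $P_{a,\sigma}(X_i)=a_i$, so the hypothesis holds there.
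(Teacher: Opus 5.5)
Your proposal is correct and is essentially the paper's proof. For (1) you split on whether $\varphi(h)$ is finite, using \cref{prop:gamma-into-O}(1) and $\gamma(K)=\cO_K$ in the finite case, and in the infinite case you rewrite $\gamma(h)=h^{-1}/(h^{-2}+t)$ where the paper uses $(h^{-1}+th)^{-1}$; (2) and (3) are the same coordinatewise evaluation. The composite-valuation argument of your Step 1 is not needed for what you actually use, though it is sound and would also give (1) via \cref{prop:gamma-into-O}(2).
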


\begin{proof}
Throughout we use freely the axioms of places: sums and products of elements whose $\varphi$-values lie in $K$ are evaluated coordinatewise (the corresponding sums and products of values being defined), and $\varphi(h^{-1})=\varphi(h)^{-1}$ whenever $\varphi(h)\in K^{\times}\cup\{0,\infty\}$ (see \cref{subsec:valued}).

(1) Suppose first $\varphi(h)\neq\infty$. Then $\varphi(1+th^{2})=1+t\varphi(h)^{2}$, which is a nonzero element of $K$ because $1+ta^{2}\neq0$ for every $a\in K$ (\cref{prop:gamma-into-O}(1) applied to $(K,v)$). Hence
\[
\varphi(\gam(h))=\varphi(h)\cdot\big(1+t\varphi(h)^{2}\big)^{-1}=\gam(\varphi(h))\in\gam(K)=\cO_{K},
\]
using \cref{prop:gamma-K} for the last equality.

Suppose now $\varphi(h)=\infty$; in particular $h\neq0$ and, in $\KrX$,
\[
\gam(h)=\frac{h}{1+th^{2}}=\big(h^{-1}+th\big)^{-1}.
\]
Then $\varphi(h^{-1})=0$, while $\varphi(th)=t\cdot\varphi(h)=\infty$ since $t\neq0$. Hence $\varphi(h^{-1}+th)=0+\infty=\infty$, the sum of values being defined, and therefore $\varphi(\gam(h))=\infty^{-1}=0\in\cO_{K}$.

(2) An element of $\cO_{K}[\gam(\KrX)]$ is a polynomial expression, with coefficients in $\cO_{K}$ fixed by $\varphi$, in finitely many elements $\gam(h_{j})$, whose $\varphi$-values lie in $\cO_{K}$ by (1); coordinatewise evaluation keeps the value of the whole expression in the subring $\cO_{K}$. For $\lambda=a/(1+tb)\in\Lam_{\KrX}$ we have $\varphi(1+tb)=1+t\varphi(b)$ with $\varphi(b)\in\cO_{K}$, so $v\big(t\varphi(b)\big)>0$ and $1+t\varphi(b)\in\cO_{K}^{\times}$; hence $\varphi(\lambda)=\varphi(a)\cdot\big(1+t\varphi(b)\big)^{-1}\in K$.

(3) If $\varphi(X_{i})\in K$ for all $i$, then $\varphi(g)=g\big(\varphi(X_{1}),\dots,\varphi(X_{n})\big)\in K$ for every $g\in\KX$, again by coordinatewise evaluation. An element of $R$ is a finite sum $\sum_{i}\lambda_{i}g_{i}$ with $\varphi(\lambda_{i})\in K$ by (2) and $\varphi(g_{i})\in K$, so its $\varphi$-value lies in $K$. Finally, an evaluation place satisfies $P_{a,\sigma}(X_{i})=a_{i}\in K$.
\end{proof}

\begin{corollary}\label{cor:JV-T-ideal}
$J(Z(I))$ is an $R$-ideal.
\end{corollary}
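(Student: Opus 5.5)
We must show $\rt[R]{J(Z(I))}\subseteq J(Z(I))$; the reverse inclusion is automatic by \cref{rem:A-radical}. Write $J=J(Z(I))$ and fix generators $g_{1},\dots,g_{k}$ of $J$. By \cref{rem:A-radical} the $R$-radical does not depend on the generators, so these may be used in the definition. Let $f\in\rt[R]{J}$, so that
\[
f^{m}=\sum_{i=1}^{k}\omega_{i}g_{i}
\]
for some $m\geqslant1$ and $\omega_{i}\in R$. We must show that $f(a)=0$ for every $a\in Z(I)$.

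Fix $a\in Z(I)$ and choose any evaluation place, say $\varphi=P_{a,\mathrm{id}}$ from \cref{def:eval-places}. This place is the identity on $K$ and satisfies $\varphi(h)=h(a)$ for every polynomial $h\in\KX$.

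By \cref{lem:places-R}(3), each $\varphi(\omega_{i})$ lies in $K$, and in particular is finite. Moreover $\varphi(g_{i})=g_{i}(a)=0$, because $g_{i}\in J(Z(I))$ and $a\in Z(I)$.

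Every term therefore has finite value, so the place axioms let us evaluate the equation term by term:
\[
f(a)^{m}=\varphi(f^{m})=\sum_{i}\varphi(\omega_{i})\varphi(g_{i})=\sum_{i}\varphi(\omega_{i})\cdot 0=0.
\]
Since $K$ is a field, $f(a)=0$. As $a\in Z(I)$ was arbitrary, $f\in J(Z(I))$.

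The only delicate point is that the products $\varphi(\omega_{i})\varphi(g_{i})$ must be defined, so that no expression $\infty\cdot0$ occurs. This is exactly what \cref{lem:places-R}(3) guarantees, and it is there that the $t$-Kochen structure of $R$ enters. The case $Z(I)=\emptyset$ is trivial: then $J=\KX$, which is already an $R$-ideal.
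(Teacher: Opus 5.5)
Your proof is correct and follows the paper's argument: evaluate the identity $f^{m}=\sum_i\omega_i g_i$ at a point $a\in Z(I)$ via an evaluation place $P_{a,\sigma}$, with \cref{lem:places-R}(3) guaranteeing $\varphi(\omega_i)\in K$ so that term-by-term evaluation never produces $\infty\cdot 0$. The only cosmetic difference is that you fix $\sigma=\mathrm{id}$, whereas the paper allows an arbitrary permutation.
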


\begin{proof}
If $J(Z(I))=\KX$ the claim is trivial, since $\rt[R]{\KX}=\KX$. Otherwise let $h_{1},\dots,h_{N}$ generate $J(Z(I))$ and let $g\in\rt[R]{J(Z(I))}$, say $g^{m}=\sum_{j=1}^{N}\omega_{j}h_{j}$ with $\omega_{j}\in R$. Fix $a\in Z(I)$ and any permutation $\sigma$, and apply the place $\varphi=P_{a,\sigma}$ to this identity in $\KrX$: since $\varphi(\omega_{j})\in K$ by \cref{lem:places-R}(3), $\varphi(h_{j})=h_{j}(a)=0$ and $\varphi(g^{m})=g(a)^{m}$, coordinatewise evaluation gives
\[
g(a)^{m}=\sum_{j=1}^{N}\varphi(\omega_{j})\cdot0=0 .
\]
Hence $g(a)=0$ for every $a\in Z(I)$, i.e.\ $g\in J(Z(I))$.
\end{proof}

\begin{proposition}\label{prop:T-radical-props}
Let $I,I'$ be ideals of $\KX$.
\begin{enumerate}[label=\textnormal{(\arabic*)}]
\item If $I\subseteq I'$ and $I'$ is an $R$-ideal, then $\rt[R]{I}\subseteq I'$; in other words, $\rt[R]{I}$ is contained in every $R$-ideal containing $I$.
\item $\rt[R]{I}$ is an $R$-ideal; consequently $\rt[R]{I}$ is the smallest $R$-ideal containing $I$.
\item Every prime ideal of $\KX$ minimal over an $R$-ideal is an $R$-ideal.
\end{enumerate}
\end{proposition}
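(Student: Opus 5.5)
The three parts can be handled by pure algebra, using Remark~\ref{rem:A-radical}: $\rt[R]{I}=\KX\cap\sqrt{IR}$, where $IR$ is the ideal of $R$ generated by $I$. The Nullstellensatz itself is not needed here.

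For (1), suppose $I\subseteq I'$ and $g\in\rt[R]{I}$, so $g^{m}\in IR$. Since $IR\subseteq I'R$, we get $g\in\KX\cap\sqrt{I'R}=\rt[R]{I'}$. As $I'$ is an $R$-ideal, $\rt[R]{I'}=I'$, so $g\in I'$.

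For (2), set $I_{1}=\rt[R]{I}$. It is an ideal of $\KX$ containing $I$ (Remark~\ref{rem:A-radical}), so $I_{1}\subseteq\rt[R]{I_{1}}$, and only the reverse inclusion needs proof. Let $g\in\rt[R]{I_{1}}$, so $g^{m}=\sum_j\omega_j h_j$ with $h_j\in I_{1}$ and $\omega_j\in R$. Each $h_j$ lies in $\sqrt{IR}$, so $I_{1}R\subseteq\sqrt{IR}$. Radicals in the ring $R$ are idempotent, hence $g\in\sqrt{I_{1}R}\subseteq\sqrt{\sqrt{IR}}=\sqrt{IR}$. Therefore $g\in\KX\cap\sqrt{IR}=I_{1}$. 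Minimality among $R$-ideals containing $I$ then follows from (1).

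For (3), let $I$ be an $R$-ideal and $P$ a prime minimal over $I$. Since $I$ is radical, $I=P\cap Q$, where $Q$ is the intersection of the other minimal primes; if there are none, $Q=\KX$, and in all cases $Q\not\subseteq P$ by incomparability. Pick $s\in Q\setminus P$. Now let $g\in\rt[R]{P}$, say $g^{m}=\sum\omega_i p_i$ with $p_i\in P$ and $\omega_i\in R$. Multiplying by $s$ gives $(sg)^{m}=\sum (s^{m-1}\omega_i)(s p_i)$ in $R$. Each $s p_i$ lies in $P\cap Q=I$ and each $s^{m-1}\omega_i$ lies in $R$, so $sg\in\rt[R]{I}=I\subseteq P$. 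Since $P$ is prime and $s\notin P$, we conclude $g\in P$. Hence $\rt[R]{P}=P$.

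The only delicate step is (3), specifically the choice of the multiplier $s$ avoiding $P$ but lying in every other minimal prime. That choice rests on the decomposition of radical ideals in noetherian rings recalled in Subsection~\ref{subsec:radicals}. The case $m=1$ is covered by the same computation.
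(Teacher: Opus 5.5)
Your proof is correct and follows the paper's argument essentially step for step: (1) and (2) go through $\rt[R]{I}=\KX\cap\sqrt{IR}$ and idempotence of radicals in $R$, and (3) multiplies by an element that lies in every other minimal prime but not in $P$. The only difference is cosmetic: the paper builds that multiplier explicitly as a product $h_2\cdots h_k$ with $h_j\in P_j\setminus P_1$, which is the primality argument behind your phrase ``$Q\not\subseteq P$ by incomparability'' (incomparability alone does not give it; you also need $P$ prime).
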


\begin{proof}
(1) If $g\in\rt[R]{I}$ then $g^{m}\in IR\subseteq I'R$ for some $m$, so $g\in\KX\cap\sqrt{I'R}=\rt[R]{I'}=I'$.

(2) Since $\rt[R]{I}\subseteq\sqrt{IR}$, we get $\big(\rt[R]{I}\big)R\subseteq\sqrt{IR}$, hence
\[
\sqrt{\big(\rt[R]{I}\big)R}\subseteq\sqrt{\sqrt{IR}}=\sqrt{IR},
\]
and intersecting with $\KX$ yields $\rt[R]{\rt[R]{I}}\subseteq\rt[R]{I}$; the other inclusion always holds.

(3) Let $I$ be an $R$-ideal, without loss of generality proper, and write $I=P_{1}\cap\dots\cap P_{k}$ where the $P_{j}$ are the primes minimal over the radical ideal $I$, with $P_{j}\not\subseteq P_{j'}$ for $j\neq j'$. We show that $P_{1}$ is an $R$-ideal; the same argument applies to each $P_{j}$. If $k=1$ there is nothing to prove, so assume $k\geqslant2$, choose $h_{j}\in P_{j}\setminus P_{1}$ for $j\geqslant2$ and set $h=h_{2}\cdots h_{k}$; then $h\in\bigcap_{j\geqslant2}P_{j}$ and $h\notin P_{1}$ since $P_{1}$ is prime.

Let $q_{1},\dots,q_{N}$ generate $P_{1}$ and let $g\in\rt[R]{P_{1}}$, say $g^{m}=\sum_{j}\mu_{j}q_{j}$ with $\mu_{j}\in R$. For each $j$ we have $hq_{j}\in P_{1}\cap\bigcap_{j'\geqslant2}P_{j'}=I$, so $hq_{j}=\sum_{i}\alpha_{ji}f_{i}$ with $\alpha_{ji}\in\KX$, where $f_{1},\dots,f_{l}$ generate $I$. Then
\[
(hg)^{m}=h^{m}g^{m}=\sum_{j}\mu_{j}h^{m-1}\,(hq_{j})=\sum_{i}\Big(\sum_{j}\mu_{j}h^{m-1}\alpha_{ji}\Big)f_{i}\in{IR},
\]
since $\mu_{j}h^{m-1}\alpha_{ji}\in R$. Hence $hg\in\rt[R]{I}=I\subseteq P_{1}$, and since $h\notin P_{1}$ and $P_{1}$ is prime, $g\in P_{1}$.
\end{proof}

The key step is the following analogue of \cite[Proposition 3.6]{srhir}: for prime ideals, being an $R$-ideal is equivalent to $k(I)$ being formally $T$-adic. One implication is where \cref{thm:characterization} enters.

\begin{theorem}\label{thm:prime-T-ideal}
Let $I\subsetneq\KX$ be a prime ideal. If $I$ is an $R$-ideal, then $k(I)$ is formally $T$-adic over $K$.
\end{theorem}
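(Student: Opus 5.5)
The proof will go by contraposition. Assume $k(I)$ is not formally $T$-adic over $K$. From this we will produce a polynomial $D\notin I$ with $D\in IR$, hence $D\in\rt[R]{I}$. This contradicts $I$ being an $R$-ideal. The idea is to lift the identity $1/t\in\cO_{K}[\gam(k(I))]$ supplied by \cref{thm:characterization} to a rational function in $\KrX$, and then use the invertibility of elements $1+tb$ that is built into $\Lam_{\KrX}$.

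\emph{Step 1: the identity in $k(I)$.} By \cref{thm:characterization}, (3)$\Rightarrow$(1), we have $1/t\in\cO_{K}[\gam(k(I))]$. So there exist $\alpha_{1},\dots,\alpha_{r}\in k(I)$ with $1+t\alpha_{j}^{2}\neq0$, and a polynomial $Q\in\cO_{K}[Y_{1},\dots,Y_{r}]$, such that
\[
1-t\,Q\big(\gam(\alpha_{1}),\dots,\gam(\alpha_{r})\big)=0\quad\text{in }k(I).
\]
Write $\alpha_{j}=\bar p_{j}/\bar q_{j}$ with $p_{j},q_{j}\in\KX$ and $q_{j}\notin I$. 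Then $1+t\alpha_{j}^{2}\neq0$ means exactly that $d_{j}:=q_{j}^{2}+tp_{j}^{2}\notin I$.

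\emph{Step 2: lifting to $\KrX$.} Put $h_{j}=p_{j}/q_{j}\in\KrX$. The Gauss valuation $w$ is a $\tf$-valuation (\cref{cor:gauss-T-adic}), so $w(t)$ is the smallest positive element of its value group. By \cref{prop:gamma-into-O}(1) this gives $1+th_{j}^{2}\neq0$, and so $\gam(h_{j})=p_{j}q_{j}/d_{j}$ is defined. Set $b=-Q(\gam(h_{1}),\dots,\gam(h_{r}))\in\cO_{K}[\gam(\KrX)]$ and $E=1+tb$. By \cref{prop:gamma-into-O}(3) we get $w(E)=0$, so $E\neq0$. Hence $E^{-1}=1/(1+tb)\in\Lam_{\KrX}\subseteq R$; this is the one place where the denominators $1+tb$ in the definition of $\Lam$ are really used.

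\emph{Step 3: clearing denominators.} Let $N=\deg Q$ and $D=\prod_{j}d_{j}^{N}\in\KX$. Then $D\notin I$, since $I$ is prime and no $d_{j}$ lies in $I$. The product $g:=DE$ is a polynomial. Its image in $k(I)$ is $\bar D$ times the image of $E$, which is $0$ by Step 1; here we use that reduction modulo $I$ commutes with the relevant operations, because every denominator $d_{j}$ is invertible in $k(I)$, i.e.\ $E$ lies in the localization $\KX_{I}$. Hence $g\in I$, and
\[
D=E^{-1}\cdot g\in IR .
\]
So $D\in\rt[R]{I}=I$, contradicting $D\notin I$.

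The main point needing care is Step 3: checking that $E$ lies in the local ring $\KX_{I}$ and that its residue there is exactly the vanishing expression of Step 1. This holds because $\gam(h_{j})=p_{j}q_{j}/d_{j}$ with $d_{j}\notin I$, and this expression maps to $\gam(\alpha_{j})$ in $k(I)$. Apart from that, the only ingredients are the nonvanishing statements supplied by the Gauss valuation in Step 2.
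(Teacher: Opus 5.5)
Your proof is correct and follows essentially the same route as the paper: lift a certificate for $1/t$ from $k(I)$ to $\KrX$, use the Gauss valuation to see that the resulting element $1+t(\cdot)$ is nonzero so that its inverse lies in $\Lam_{\KrX}$, and clear denominators to get a polynomial outside $I$ that lies in $IR$. The differences are cosmetic: you start from clause (3) of \cref{thm:characterization} rather than clause (4), which spares you lifting a second element, and you check $E\neq0$ directly via $w(E)=0$ instead of invoking \cref{thm:characterization} again for $\KrX$ together with \cref{cor:gauss-T-adic}.
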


\begin{proof}
Write $\pi\colon\KX_{I}\to k(I)$ for the canonical map, where $\KX_{I}\subseteq\KrX$ denotes the localization of $\KX$ at $I$, and note that $\pi$ restricts to the identity on $K$ and $\pi(t)=t$.

Suppose towards a contradiction that $k(I)$ is not formally $T$-adic. By \cref{thm:characterization} (applied to the field extension $k(I)$ of $K$) we have $1/t\in\Lam_{k(I)}$, say
\begin{equation}\label{eq:one-over-t}
\frac{1}{t}=\frac{\bar a}{1+t\bar b},\qquad \bar a,\bar b\in\cO_{K}[\gam(k(I))],\ 1+t\bar b\neq0 .
\end{equation}

\emph{Step 1:} The element $\bar a$ is a polynomial expression, with coefficients in $\cO_{K}$, in finitely many values $\gam(\xi_{j})$ with $\xi_{j}\in k(I)$ and $1+t\xi_{j}^{2}\neq0$. Write $\xi_{j}=\pi(p_{j}/q_{j})$ with $p_{j},q_{j}\in\KX$ and $q_{j}\notin I$, and set $d_{j}=q_{j}^{2}+tp_{j}^{2}\in\KX$. Its image satisfies
\[
\pi(d_{j})=\bar q_{j}^{2}\,(1+t\xi_{j}^{2})\neq0,
\]
so $d_{j}\notin I$; in particular $d_{j}\neq0$ and the element
\[
\eta_{j}:=\gam\big(p_{j}/q_{j}\big)=\frac{p_{j}q_{j}}{d_{j}}\in\KX_{I}
\]
is defined, belongs to $\gam(\KrX)$, and satisfies $\pi(\eta_{j})=\gam(\xi_{j})$. Replacing each $\gam(\xi_{j})$ by $\eta_{j}$ in the polynomial expression for $\bar a$, we obtain $a\in\cO_{K}[\gam(\KrX)]\cap\KX_{I}$ with $\pi(a)=\bar a$; moreover, taking as common denominator a suitable product $G_{1}$ of powers of the $d_{j}$, we may write $a=F_{1}/G_{1}$ with $F_{1},G_{1}\in\KX$ and $G_{1}\notin I$ (a product of elements outside the prime ideal $I$). Similarly we lift $\bar b$ to $b=F_{2}/G_{2}\in\cO_{K}[\gam(\KrX)]\cap\KX_{I}$ with $G_{2}\notin I$ and $\pi(b)=\bar b$.

\emph{Step 2:}
Equation \eqref{eq:one-over-t} reads $1+t\bar b=t\bar a$ in $k(I)$. Substituting $\bar a=\pi(F_{1})/\pi(G_{1})$ and $\bar b=\pi(F_{2})/\pi(G_{2})$ and multiplying through by $\pi(G_{1}G_{2})$, we obtain
\[
\pi\big(G_{1}G_{2}+t(F_{2}G_{1}-F_{1}G_{2})\big)=0 .
\]
Hence $c:=G_{1}G_{2}+t(F_{2}G_{1}-F_{1}G_{2})\in I$, and we may write $c=\sum_{i=1}^{l}\alpha_{i}f_{i}$ with $\alpha_{i}\in\KX$.

\emph{Step 3:}
Consider the element of $\KrX$
\[
u:=1+t(b-a) .
\]
Note that $b-a\in\cO_{K}[\gam(\KrX)]$ and that
\[
u=1+t\Big(\frac{F_{2}}{G_{2}}-\frac{F_{1}}{G_{1}}\Big)=\frac{G_{1}G_{2}+t(F_{2}G_{1}-F_{1}G_{2})}{G_{1}G_{2}}=\frac{c}{G_{1}G_{2}} .
\]
We claim $u\neq0$. Otherwise $1/t=a-b\in\cO_{K}[\gam(\KrX)]$, so $\KrX$ would not be formally $T$-adic by \cref{thm:characterization}, contradicting \cref{cor:gauss-T-adic}. Consequently $1/u=1/(1+t(b-a))\in\Lam_{\KrX}$.

\emph{Step 4:}
Dividing $c=\sum_{i}\alpha_{i}f_{i}$ by $u$ yields
\[
G_{1}G_{2}=\frac{c}{u}=\sum_{i=1}^{l}\frac{\alpha_{i}}{u}\,f_{i},\qquad \frac{\alpha_{i}}{u}\in\Lam_{\KrX}\cdot\KX=R .
\]
Hence $G_{1}G_{2}\in\rt[R]{I}=I$. But $G_{1}\notin I$ and $G_{2}\notin I$, contradicting the primality of $I$.
\end{proof}

\begin{corollary}\label{cor:T-ideal-closed}
Every $R$-ideal of $\KX$ is closed. Moreover, for a prime ideal $I\subsetneq\KX$ the following are equivalent: \textnormal{(i)} $I$ is an $R$-ideal; \textnormal{(ii)} $k(I)$ is formally $T$-adic over $K$; \textnormal{(iii)} $I$ is closed.
\end{corollary}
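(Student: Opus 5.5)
The plan is to prove the equivalences for prime ideals first, as a cycle (i)$\Rightarrow$(ii)$\Rightarrow$(iii)$\Rightarrow$(i), and then deduce the first claim for arbitrary $R$-ideals by decomposing into minimal primes.

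\emph{The prime case.}
\begin{enumerate}[label=\textnormal{(\alph*)}]
\item (i)$\Rightarrow$(ii) is exactly \cref{thm:prime-T-ideal}.
\item (ii)$\Rightarrow$(iii): apply \cref{thm:model-implies-closed} with $T_{0}=T$, $\cL_{0}=\cL$ and $\mathbf{K}$. These hypotheses hold: $T$ is model complete by \cref{thm:qe}(2), since quantifier elimination implies model completeness, and $\cL$ adds only a constant and relation symbols to $\{+,-,\cdot,{}^{-1},0,1\}$. Being formally $T$-adic means precisely that $k(I)$ carries an $\cL$-structure modelling $\tf$ with $\mathbf{K}\subseteq\mathbf{k(I)}$ an $\cL$-embedding. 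Hence $I$ is closed.
\item (iii)$\Rightarrow$(i): if $I$ is closed, then $I=J(Z(I))$, which is an $R$-ideal by \cref{cor:JV-T-ideal}.
\end{enumerate}

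\emph{General $R$-ideals.} Let $I$ be an $R$-ideal. If $I=\KX$ it is trivially closed, since $J(Z(\KX))=J(\emptyset)=\KX$. Otherwise, $I$ is radical by \cref{rem:A-radical}, because $I=\rt[R]{I}$. Since $\KX$ is noetherian, we can write $I=P_{1}\cap\dots\cap P_{k}$ with the $P_{j}$ the primes minimal over $I$.
\begin{enumerate}[label=\textnormal{(\alph*)}]
\item By \cref{prop:T-radical-props}(3), each $P_{j}$ is an $R$-ideal.
\item By the prime case, each $P_{j}$ is therefore closed.
\item Intersections of closed ideals are closed. From $I\subseteq P_{j}$ we get $J(Z(I))\subseteq J(Z(P_{j}))=P_{j}$ for every $j$, hence $J(Z(I))\subseteq\bigcap_{j}P_{j}=I$. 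The reverse inclusion $I\subseteq J(Z(I))$ is automatic, so $I$ is closed.
\end{enumerate}

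None of these steps should be a real obstacle: the substantive work was done in \cref{thm:prime-T-ideal} and \cref{thm:model-implies-closed}. The one point needing care is checking that the notion ``formally $T$-adic'' matches the hypothesis of \cref{thm:model-implies-closed} word for word. It does, because $\cL$-substructures of models of $T$ are fields with the induced predicates and with $t$ interpreted in $K$.
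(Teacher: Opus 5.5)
Your proposal is correct and follows the paper's proof essentially step for step. You prove the cycle (i)$\Rightarrow$(ii)$\Rightarrow$(iii)$\Rightarrow$(i) via \cref{thm:prime-T-ideal}, \cref{thm:model-implies-closed} and \cref{cor:JV-T-ideal}, and then pass to arbitrary $R$-ideals through the minimal-prime decomposition and \cref{prop:T-radical-props}(3). The only cosmetic difference is that you obtain $J(Z(I))\subseteq\bigcap_j P_j$ from monotonicity of $J\circ Z$, whereas the paper computes $J(Z(I))=J\big(\bigcup_j Z(P_j)\big)=\bigcap_j J(Z(P_j))$; both are equally valid.
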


\begin{proof}
We first prove the equivalence for prime $I$. The implication (i)$\Rightarrow$(ii) is \cref{thm:prime-T-ideal} and (ii)$\Rightarrow$(iii) is \cref{thm:model-implies-closed} applied with $\cL_{0}=\cL$ and $T_{0}=T$ (model complete by \cref{thm:qe}). Finally, for (iii)$\Rightarrow$(i), if $I$ is closed then $I=J(Z(I))$ is an $R$-ideal by \cref{cor:JV-T-ideal}.

Now let $I$ be any $R$-ideal; if $I=\KX$ then $Z(I)=\emptyset$ and $J(Z(I))=\KX$, so $I$ is closed. Otherwise, since $I$ is radical, $I=P_{1}\cap\dots\cap P_{k}$ with $P_{j}$ the minimal primes over $I$; each $P_{j}$ is an $R$-ideal by \cref{prop:T-radical-props}(3), hence closed by the equivalence above. Therefore
\[
J(Z(I))=J\Big(\bigcup_{j}Z(P_{j})\Big)=\bigcap_{j}J(Z(P_{j}))=\bigcap_{j}P_{j}=I .\qedhere
\]
\end{proof}

\begin{theorem}[Nullstellensatz for $\ct$]\label{thm:nss}
Let $\mathbf{K}\models T$ and let $I=\langle f_{1},\dots,f_{l}\rangle$ be an ideal of $\KX$. Then
\[
J(Z(I))=\rt[R]{I}.
\]
Equivalently, a polynomial $f\in\KX$ vanishes on $Z(I)$ if and only if there are $m\geqslant1$ and $\omega_{1},\dots,\omega_{l}\in R=\Lam_{\KrX}\cdot\KX$ such that
\[
f^{m}=\sum_{i=1}^{l}\omega_{i}f_{i}.
\]
\end{theorem}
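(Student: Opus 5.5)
The plan is to prove the two inclusions $\rt[R]{I}\subseteq J(Z(I))$ and $J(Z(I))\subseteq\rt[R]{I}$ separately, using only results already established. The degenerate case $I=\KX$ is trivial: then $Z(I)=\emptyset$, so $J(Z(I))=\KX$, and $1=1^{m}\in I\subseteq IR$ shows $\rt[R]{I}=\KX$ as well. So we may assume $I$ is proper.

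For the inclusion $\rt[R]{I}\subseteq J(Z(I))$, we note that $I\subseteq J(Z(I))$ holds trivially. By \cref{cor:JV-T-ideal}, $J(Z(I))$ is an $R$-ideal. Then \cref{prop:T-radical-props}(1) gives $\rt[R]{I}\subseteq J(Z(I))$. Alternatively, one can argue directly: given $f^{m}=\sum_i\omega_if_i$, apply an evaluation place $P_{a,\sigma}$ at each $a\in Z(I)$. By \cref{lem:places-R}(3) the values $P_{a,\sigma}(\omega_i)$ lie in $K$, so $f(a)^m=0$.

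For the reverse inclusion, we set $I'=\rt[R]{I}$. By \cref{prop:T-radical-props}(2), $I'$ is an $R$-ideal, so \cref{cor:T-ideal-closed} shows that $I'$ is closed, i.e.\ $J(Z(I'))=I'$. Since $I\subseteq I'$, we get $Z(I')\subseteq Z(I)$. We also have $I'\subseteq J(Z(I))$ by the first inclusion, and every point of $Z(I)$ annihilates $I'$, so $Z(I)\subseteq Z(I')$. Hence $Z(I)=Z(I')$, and therefore
\[
J(Z(I))=J(Z(I'))=I'=\rt[R]{I}.
\]

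Neither step is difficult. The real work was already done in \cref{thm:prime-T-ideal}, which, through \cref{thm:characterization}, is what makes every $R$-ideal closed. The only point requiring care is the bookkeeping of the equality $Z(I)=Z(\rt[R]{I})$. Finally, the reformulation of the statement in terms of $f^m=\sum\omega_if_i$ is just \cref{def:A-radical} with $A=R$, and by \cref{rem:A-radical} it does not depend on the chosen generators.
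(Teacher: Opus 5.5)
Your proposal is correct and follows essentially the same route as the paper. You get $\rt[R]{I}\subseteq J(Z(I))$ from \cref{cor:JV-T-ideal} and \cref{prop:T-radical-props}(1), deduce $Z(\rt[R]{I})=Z(I)$, and conclude using that the $R$-ideal $\rt[R]{I}$ is closed by \cref{cor:T-ideal-closed}; your separate treatment of $I=\KX$ is harmless but unnecessary, since \cref{cor:T-ideal-closed} already covers that case.
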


\begin{proof}
By \cref{cor:JV-T-ideal}, $J(Z(I))$ is an $R$-ideal containing $I$, so $\rt[R]{I}\subseteq J(Z(I))$ by \cref{prop:T-radical-props}(1). Applying the inclusion-reversing operator $Z$ to $I\subseteq\rt[R]{I}\subseteq J(Z(I))$ and using $Z(J(Z(I)))=Z(I)$ (\cref{lem:JV-props}(1) and its proof) gives $Z(\rt[R]{I})=Z(I)$. Since $\rt[R]{I}$ is an $R$-ideal (\cref{prop:T-radical-props}(2)), it is closed by \cref{cor:T-ideal-closed}, so
\[
\rt[R]{I}=J\big(Z(\rt[R]{I})\big)=J(Z(I)).\qedhere
\]
\end{proof}

\begin{corollary}\label{cor:bijections}
The operators $Z$ and $J$ induce mutually inverse bijections between:
\begin{enumerate}[label=\textnormal{(\arabic*)}]
\item $R$-ideals of $\KX$ and algebraic subsets of $K^{n}$;
\item prime $R$-ideals and irreducible algebraic subsets of $K^{n}$;
\item maximal proper $R$-ideals and points of $K^{n}$.
\end{enumerate}
\end{corollary}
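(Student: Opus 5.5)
The plan is to reduce \cref{cor:bijections} to the abstract dictionary of \cref{thm:bijections-abstract}, applied with $\cL_{0}=\cL$ and $T_{0}=T$. This is legitimate because $T$ is model complete by \cref{thm:qe}, and $\cL$ adds only constants and relation symbols to $\{+,-,\cdot,{}^{-1},0,1\}$. That theorem already gives mutually inverse bijections via $Z$ and $J$ between closed ideals and algebraic sets, prime closed ideals and irreducible algebraic sets, and maximal proper closed ideals and points. It therefore suffices to show that the class of $R$-ideals coincides with the class of closed ideals of $\KX$.

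The first inclusion is \cref{cor:T-ideal-closed}, which says that every $R$-ideal is closed. For the converse, let $I$ be closed, so $I=J(Z(I))$. By \cref{cor:JV-T-ideal} the ideal $J(Z(I))$ is an $R$-ideal, hence so is $I$. Equivalently, \cref{thm:nss} gives $\rt[R]{I}=J(Z(I))=I$.

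Once the two classes are identified, the three items follow at once. Item (1) is \cref{thm:bijections-abstract}(1). Item (2) follows because the prime $R$-ideals are exactly the prime closed ideals. Item (3) follows because the maximal proper $R$-ideals are exactly the maximal elements among proper closed ideals, which is the notion used in \cref{thm:bijections-abstract}(3). For (3) one may also note that these are precisely the ideals $I_{a}$: each $I_{a}$ is a maximal ideal of $\KX$ and is closed, hence an $R$-ideal.

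There is essentially no obstacle. The one point to watch is that ``maximal proper $R$-ideal'' is read as maximal among proper $R$-ideals, matching the formulation in \cref{thm:bijections-abstract}(3). This is harmless, since the $I_{a}$ are in fact maximal ideals of $\KX$.
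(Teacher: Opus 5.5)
Your proposal is correct and is essentially the paper's own proof. The paper likewise uses \cref{cor:JV-T-ideal} and \cref{cor:T-ideal-closed} to identify the $R$-ideals with the closed ideals, and then applies \cref{thm:bijections-abstract} with $\cL_{0}=\cL$ and $T_{0}=T$.
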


\begin{proof}
By \cref{cor:JV-T-ideal,cor:T-ideal-closed}, the $R$-ideals of $\KX$ are exactly the closed ideals. The statement is then \cref{thm:bijections-abstract} applied with $\cL_{0}=\cL$ and $T_{0}=T$.
\end{proof}

\begin{example}\label{ex:nss-example}
Let $I=(X^{2}+t)\subseteq K[X]$, a prime ideal with $Z(I)=\emptyset$ (\cref{ex:sanity}(3)). \Cref{thm:nss} asserts that $\rt[R]{I}=J(\emptyset)=K[X]$, i.e.\ that $1/(X^{2}+t)\in R$. An explicit certificate is easy to exhibit here: since
\[
\gam(1/X)=\frac{1/X}{1+t/X^{2}}=\frac{X}{X^{2}+t}
\qquad\text{and hence}\qquad
1-X\,\gam(1/X)=\frac{t}{X^{2}+t},
\]
we get
\[
1=\underbrace{\tfrac{1}{t}\big(1-X\gam(1/X)\big)}_{\textstyle\in\,R}\cdot\,(X^{2}+t),
\]
noting that $t^{-1}\in K\subseteq\KX\subseteq R$ and $X\gam(1/X)\in\KX\cdot\Lam_{\KrX}\subseteq R$. This illustrates a feature of the ring $R=\Lam_{\KrX}\cdot\KX$: although $1/t\notin\Lam_{\KrX}$ (\cref{cor:gauss-T-adic}), the polynomial factor allows arbitrary coefficients from $K$; what $R$ cannot do is reach the poles of $\gam$-denominators at $K$-rational points, by \cref{lem:places-R}.
\end{example}

\section{The \texorpdfstring{$T$}{T}-spectrum}\label{sec:spectrum}

In this final section we compare three natural incarnations of a ``spectrum'' attached to $T$, in the spirit of the real spectrum and of the $p$-adic spectrum \cite{Robinson86,Belair90,Srhir08}: a set of prime ideals, a set of morphisms into models of $T$ and, for polynomial rings, the space of types of $T$. Throughout, $\mathbf{K}\models T$ is fixed and $A$ denotes a commutative $K$-algebra.

If $M$ is the underlying field of a model $\mathbf{M}\models T$ equipped with a $K$-algebra structure, the structure map $K\to M$ is a ring embedding sending $t$ to $t$, hence an elementary $\cL$-embedding by \cref{thm:qe}(4); we will use this repeatedly. For a $K$-algebra homomorphism $\alpha\colon A\to M$, we write $\langle\alpha(A)\rangle_{\mathbf{M}}$ for the $\cL$-substructure of $\mathbf{M}$ generated by $\alpha(A)$, i.e.\ the subfield of $M$ generated by $\alpha(A)$ over $K$, equipped with the restricted predicates $P_{n}$.

\begin{definition}\label{def:T-spec} \
\begin{enumerate}[label=\textnormal{(\arabic*)}, leftmargin=*]
\item The \emph{$T$-spectrum} of $A$ is
\[
\tSpec(A)=\big\{\alpha\colon A\to M\ \text{$K$-algebra homomorphism},\ \mathbf{M}\models T\big\}\big/\sim\;,
\]
where $\alpha\sim\beta$ (for $\alpha\colon A\to M$, $\beta\colon A\to N$) if and only if there is an isomorphism of $\cL$-structures $\gam_{0}\colon\langle\alpha(A)\rangle_{\mathbf{M}}\to\langle\beta(A)\rangle_{\mathbf{N}}$ with $\gam_{0}\circ\alpha=\beta$. We write $[\alpha]$ for the $\sim$-class of $\alpha$. For tuples $a=(a_{0},a_{1},\dots,a_{m})\in A^{m+1}$ and $n=(n_{1},\dots,n_{m})$ with $n_{i}\geqslant2$, set
\[
U(a,n)=\Big\{[\alpha]\in\tSpec(A)\ :\ \alpha(a_{0})\neq0\ \wedge\ \bigwedge_{i=1}^{m}\mathbf{M}_{\alpha}\models P_{n_{i}}\big(\alpha(a_{i})\big)\Big\},
\]
where $\mathbf{M}_{\alpha}$ denotes the target model of $\alpha$. The \emph{spectral topology} on $\tSpec(A)$ is the topology having these sets as a basis.
\item We let 
\[
\SpecT(A)=\big\{\mathfrak{p}\in\Spec(A)\,:\, \Frac(A/\mathfrak{p})\ \text{is formally $T$-adic over }K\big\},
\]
where $\Frac(A/\mathfrak p)$ is viewed as a field extension of $K$ via $K\to A\to A/\mathfrak{p}$. We equip $\SpecT(A)$ with the subspace topology from the Zariski topology on $\Spec(A)$.
\item We let $S_{n}(K)$ denote the Stone space of complete $n$-types over $K$ equipped with the logic topology. 
\end{enumerate}
\end{definition}

\begin{remark}\label{rem:amalgamation}
Note that $\sim$ is indeed an equivalence relation. Moreover, by quantifier elimination, every isomorphism between $\cL$-substructures of models of $T$ is a partial elementary map, and therefore extends to an elementary embedding into any sufficiently saturated elementary extension of the target model. It follows that $\alpha\sim\beta$ if and only if there are $\mathbf{E}\models T$ and $\cL$-embeddings $g\colon\mathbf{M}\to\mathbf{E}$, $h\colon\mathbf{N}\to\mathbf{E}$ with $g\circ\alpha=h\circ\beta$. In particular, our definition of $\tSpec(A)$ agrees with the one used by Srhir in the $p$-adic case \cite[\S2]{Srhir08}.
\end{remark}

\begin{theorem}\label{thm:types}\
\begin{enumerate}[label=\textnormal{(\arabic*)}, leftmargin=*]
\item The assignment $[\alpha]\mapsto\ker\alpha$ is a well-defined continuous surjection
\[
\ker\colon \tSpec(A)\longrightarrow\SpecT(A),
\]
which in general is not injective (see \cref{ex:kernel-not-injective}).
\item For $A=\KX$, the map $\Phi\colon S_{n}(K)\to\tSpec(\KX)$ sending $p$ to the class of
\[
\alpha_{p}\colon\KX\to M,\qquad f\mapsto f(c),
\]
for any realization $c$ of $p$ in an elementary extension $\mathbf{M}\succeq\mathbf{K}$, is a continuous bijection. Moreover, $\ker\circ\,\Phi$ sends $p=\tp(c/K)$ to $I_{c}=\{f\in\KX:f(c)=0\}$, and $\SpecT(\KX)$ is exactly the set of prime $R$-ideals of $\KX$.
\end{enumerate}
\end{theorem}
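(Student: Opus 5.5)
For part (1), I first check that $\ker$ is well defined. If $\alpha\colon A\to M$ with $\mathbf{M}\models T$, then $\ker\alpha$ is prime because $M$ is a field. Moreover $\Frac(A/\ker\alpha)$ is $K$-isomorphic to the subfield $\langle\alpha(A)\rangle$ of $M$. With the restricted predicates this subfield is an $\cL$-substructure of $\mathbf{M}$, so it models $\tf$. The inclusion $\mathbf{K}\subseteq\mathbf{M}$ is an $\cL$-embedding by \cref{thm:qe}(4). Hence $\Frac(A/\ker\alpha)$ is formally $T$-adic, and $\ker\alpha\in\SpecT(A)$. If $\alpha\sim\beta$ via $\gam_{0}$, then $\gam_0$ is injective and $\gam_0\circ\alpha=\beta$, so $\ker\alpha=\ker\beta$.

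Surjectivity goes as follows. Given $\mathfrak p\in\SpecT(A)$, equip $\Frac(A/\mathfrak p)$ with a $\tf$-structure over $\mathbf{K}$ and embed it into some $\mathbf{M}\models T$. The composite $A\to A/\mathfrak p\to M$ then has kernel $\mathfrak p$.

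For continuity it suffices to treat basic Zariski opens $D(a)\cap\SpecT(A)$. Their preimage is $\{[\alpha]:\alpha(a)\neq0\}=U((a),())$, a basic open set (I allow $m=0$ in $U(a,n)$, or else take $n_1=2$ and $a_1=1$). For non-injectivity I point to the example: $\ker\alpha$ determines the field $\langle\alpha(A)\rangle$ but not its predicates $P_n$.

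For part (2), a type $p$ over $K$ is realized by some $c$ in an elementary extension. Two realizations $c,c'$ of $p$ give $K$-isomorphic $\cL$-substructures $K(c)\cong K(c')$ sending $c$ to $c'$, since they satisfy the same quantifier-free formulas. So $\Phi$ is well defined. Conversely, let $\alpha\colon\KX\to M$ be given. Then $\mathbf{K}\preceq\mathbf{M}$ by \cref{thm:qe}(4), and we may set $p=\tp(\alpha(X)/K)$; this gives $\Phi(p)=[\alpha]$, so $\Phi$ is surjective.

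For injectivity, suppose $\alpha_p\sim\alpha_q$. Then there is an $\cL$-isomorphism $K(c)\to K(c')$ over $K$ with $c\mapsto c'$, so $c$ and $c'$ have the same quantifier-free type over $K$. By quantifier elimination (\cref{thm:qe}(2)) they have the same complete type, so $p=q$.

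For continuity, the preimage of $U(a,n)$ is the set of types containing the formula $a_0(x)\neq0\wedge\bigwedge_i P_{n_i}(a_i(x))$. This is a basic clopen set of $S_n(K)$.

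The identity $\ker\circ\,\Phi(\tp(c/K))=I_c$ is immediate from the definitions. For the last claim, note that for a prime $I$ of $\KX$, $I\in\SpecT(\KX)$ means $k(I)$ is formally $T$-adic. By \cref{cor:T-ideal-closed} this holds exactly when $I$ is an $R$-ideal, provided $I$ is proper. The improper ideal is not prime, so nothing more is needed.

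The main thing to watch is the well-definedness and injectivity of $\Phi$. Both hinge on the equivalence between quantifier-free types realized in different elementary extensions and $\sim$. Quantifier elimination handles this, provided one notes that the diagram of the generated substructure $K(c)$ is determined by the quantifier-free type of $c$ over $K$.
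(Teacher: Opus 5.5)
Your proposal is correct and follows the paper's proof essentially step by step: it uses the same kernel/generated-substructure argument for (1), the same observation that the structure map is elementary by \cref{thm:qe}(4), and the same realization-of-types argument for well-definedness, surjectivity and continuity of $\Phi$. The only difference is that you state explicitly the appeal to quantifier elimination in the injectivity of $\Phi$, which the paper leaves implicit.
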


\begin{proof}
(1) If $\alpha\sim\beta$ via $\gam_{0}$ then $\Ker\beta=\Ker(\gam_{0}\circ\alpha)=\Ker\alpha$, $\gam_{0}$ being injective; so the map is well defined. For $\alpha\colon A\to M$, the kernel $\mathfrak p=\Ker\alpha$ is prime and $\Frac(A/\mathfrak p)$ is $K$-isomorphic to the underlying field of $\langle\alpha(A)\rangle_{\mathbf{M}}$, an $\cL$-substructure of $\mathbf{M}\models T$ containing $\mathbf{K}$ as an $\cL$-substructure; hence $\Frac(A/\mathfrak p)$ is formally $T$-adic over $K$ and $\mathfrak p\in\SpecT(A)$. Conversely, given $\mathfrak p\in\SpecT(A)$, equip $\Frac(A/\mathfrak p)$ with a structure modeling $\tf$ over $\mathbf{K}$ and embed it into some $\mathbf{M}\models T$; the composite $\alpha_{\mathfrak p}\colon A\to A/\mathfrak p\to\Frac(A/\mathfrak p)\to M$ is a $K$-algebra homomorphism with kernel $\mathfrak p$, so $\ker$ is surjective. Finally, taking $m=0$, the basic opens $U(a_{0})$ are the preimages of the Zariski opens $D(a_{0})$, so $\ker$ is continuous for the Zariski topology on $\SpecT(A)$. 

(2) Let us first show that $\Phi$ is well defined. Let $c,c'$ be realizations of $p$ in some elementary extension $\mathbf{M}$. The substructure $\langle\alpha_{p}(\KX)\rangle_{\mathbf{M}}$ is the subfield $K(c)$ with the induced predicates, and likewise for $c'$. Since $\tp(c/K)=\tp(c'/K)$, the assignment $f(c)/g(c)\mapsto f(c')/g(c')$ is a well-defined field isomorphism $K(c)\to K(c')$ over $K$ commuting with $\alpha_{p},\alpha_{p}'$, and it preserves each $P_{n}$ because the conditions $P_{n}\big(f(x)g(x)^{-1}\big)$ are formulas over $K$ belonging to both types. For injectivity, if $[\alpha_{p}]=[\alpha_{q}]$, with realizations $c\models p$ and $c'\models q$, there is an $\cL$-isomorphism $\gam_{0}\colon K(c)\to K(c')$ over $K$ with $\gam_{0}(c)=c'$; hence $p=q$. Finally, for surjectivity, let $\alpha\colon\KX\to M$ be a $K$-algebra homomorphism with $\mathbf{M}\models T$. As noted above we may assume $\mathbf{K}\preceq\mathbf{M}$. Set $c=(\alpha(X_{1}),\dots,\alpha(X_{n}))$ and $p=\tp(c/K)$; since $\alpha$ is a $K$-algebra map, $\alpha=\alpha_{p}$ and $[\alpha]=\Phi(p)$. Continuity follows immediately since the basic opens $U(a,m)$ are definable sets. 

Finally, $\Ker\alpha_{p}=I_{c}$ by definition, and $\SpecT(\KX)$ consists of the primes $\mathfrak p$ with $k(\mathfrak p)$ formally $T$-adic, which by \cref{cor:T-ideal-closed} are exactly the prime $R$-ideals.
\end{proof}

\begin{example}\label{ex:kernel-not-injective}
Let $A=K[x]$ be a polynomial ring in one variable and let $\mathbf{M}\succeq\mathbf{K}$ be an elementary extension containing a unit $u\in\cO_{M}^{\times}$ whose residue lies outside $k_{K}$. Since $k_{K}$ is algebraically closed, $\res(u)$ is transcendental over $k_{K}$, and hence $u$ is transcendental over $K$. Therefore the $K$-algebra morphisms
\[
\alpha\colon A\to M,\ x\mapsto u,\qquad\beta\colon A\to M,\ x\mapsto tu,
\]
are injective and $\Ker\alpha=\Ker\beta=(0)$. However $[\alpha]\neq[\beta]$: since the residue field of $M$ is algebraically closed, every unit is a square (\cref{lem:powers}), so $\mathbf{M}\models P_{2}(u)$, while $v(tu)=v(t)\notin2\Gamma_{M}$ and $\mathbf{M}\models\neg P_{2}(tu)$; an isomorphism $\gam_{0}$ as in \cref{def:T-spec} would have to send $u\mapsto tu$ and preserve $P_{2}$. A point of $\tSpec(A)$ thus carries strictly more information than its kernel, namely the $\cL$-structure induced on the residue field --- just as a prime of a real ring may support several orderings of its residue field.
\end{example}


\begin{remark}\label{rem:topologies}
Under the bijection $\Phi$ of \cref{thm:types}(2), the basic open $U(f,n)$, for $f=(f_{0},\dots,f_{m})$ a tuple of polynomials, corresponds to $\big[f_{0}(x)\neq0\wedge\bigwedge_{i}P_{n_{i}}(f_{i}(x))\big]$, a clopen set of $S_{n}(K)$. The spectral topology is thus the coarsening of the logic topology of $S_{n}(K)$ generated by these \emph{positive} conditions only, their negations being discarded --- exactly as the spectral topology of the real spectrum coarsens the constructible topology, whose clopen sets are given by arbitrary sign conditions. In particular, $\Phi$ is not a homeomorphism: the spectral topology is only $\mathrm{T}_{0}$ and has nontrivial specializations.
\end{remark}

\section{Open questions}\label{sec:questions}

We close with some directions for further work.

\subsection*{Real Laurent series}
The most immediate question is the analogue of our results for the field $\rlt$ of real Laurent series, a henselian valued field whose residue field $\R$ is real closed rather than algebraically closed. Here two structures interact: the valuation, controlled as above by the $t$-Kochen operator, and the order. Exploratory work carried out during the preparation of this article suggests the following statements, in the language $\cL_{t}$ of pointed rings (with the natural adaptations of our definitions: $T_{\R}=\Th_{\cL_{t}}(\rlt)$ and $\gam$, $\Lam_{L}$ unchanged).

\begin{conjecture}\label{conj:rt-1}
Let $\mathbf{K}\models T_{\R}$ and let $L$ be an ordered field extension of $K$. Then $L$ admits a structure making it a model of $(T_{\R})_{\forall}$ compatible with the order and with the structure of $\mathbf{K}$ if and only if the following two conditions hold:
\begin{enumerate}[label=\textnormal{(\arabic*)}]
\item $1/t\notin\cO_{K}[\gam(L)]$;
\item $\cO_{K}[\gam(L)]$ is a convex subring of $L$.
\end{enumerate}
\end{conjecture}

\begin{conjecture}\label{conj:rt-2}
Let $\mathbf{K}\models T_{\R}$ and let $I$ be an ideal of $\KX$. Then $J(Z(I))=I$ if and only if $I$ is real and $I=\rt[R]{I}$, where $\rt[R]{I}$ is defined as in \cref{def:T-radical}.
\end{conjecture}

The expected Nullstellensatz for $\rlt$ would then combine the real radical and the $t$-Kochen radical, in analogy with the way the $p$-adic Nullstellensatz of \cite{jardenroquette} refines to Kochen's theory in the presence of orderings.

\subsection*{Topology of the $T$-spectrum}
\Cref{thm:types} identifies $\tSpec(\KX)$ with a space of types endowed with a positive topology. It is natural to ask whether $\tSpec(A)$ is a spectral space in the sense of Hochster for every $K$-algebra $A$ (compare \cite{DST19}, and \cite{Belair90,Srhir08} for the $p$-adic case), whether the quasi-compact opens are exactly the finite unions of basic opens, and what the specialization order is. A related question, following Robinson's topos-theoretic construction of the $p$-adic spectrum \cite{Robinson86}, is whether $\tSpec$ underlies a Cole spectrum for a coherent theory of ``$T$-closed local rings''.


\bibliographystyle{amsplain}
\bibliography{biblio}

\end{document}